\documentclass[a4paper,fleqn]{cas-sc}
\usepackage[numbers,sort&compress]{natbib}

\def\tsc#1{\csdef{#1}{\textsc{\lowercase{#1}}\xspace}}
\tsc{WGM}
\tsc{QE}
\tsc{EP}
\tsc{PMS}
\tsc{BEC}
\tsc{DE}
\usepackage{algorithm}     
\usepackage{algpseudocode}  
\usepackage{setspace}
\let\Algorithm\algorithm
\renewcommand\algorithm[1][]{\Algorithm[#1]\setstretch{1.5}}
\usepackage{float}  
\usepackage{flafter}
\usepackage{placeins}
\usepackage{mathtools}
\usepackage{booktabs}
\usepackage{tcolorbox}
\usepackage{amsthm}
\usepackage{graphicx}

\newcommand{\bunderline}[1]{\underline{#1}}
\renewcommand{\vec}[1]{{\bunderline{#1}}}

\newcommand{\mat}[1]{{\bunderline{\bunderline{#1}}}}

\def\reals{\mathbb{R}}
\def\Tm{{\mathcal T}}
\def\mom{{\mathsf M}}
\def\morder{{M_\text{\normalfont o}}}
\def\mdeg{{M_\text{\normalfont deg}}}
\def\mbasis{{M_\text{\normalfont basis}}}
\def\mpos{{M_\text{\normalfont pos}}}

\makeatletter
\renewcommand*\env@matrix[1][\arraystretch]{%
  \edef\arraystretch{#1}%
  \hskip -\arraycolsep
  \let\@ifnextchar\new@ifnextchar
  \array{*\c@MaxMatrixCols c}}
\makeatother

\newtheorem{theorem}{Theorem}
\newtheorem{lemma}[theorem]{Lemma}
\newdefinition{rmk}{Remark}

\begin{document}
\shorttitle{}

\shortauthors{D. Gambo and J.A. Rossmanith}

\title [mode = title]{Positivity-preserving semi-Lagrangian discontinuous Galerkin methods for multi-species Vlasov-Amp\`ere models of plasma}                      

\author{Dauda Gambo}[type=editor,orcid=0000-0001-8364-2065]

\ead{dgambo@iastate.edu}

\affiliation{organization={Department of Mathematics, Iowa State University},
    addressline={411 Morrill Road}, 
    city={Ames},
    postcode={50011}, 
    state={IA},
    country={USA}}

\author{James A. Rossmanith}[type=editor,orcid=0000-0002-3629-8895]

\cormark[1]

\ead{rossmani@iastate.edu}
\cortext[cor1]{Corresponding author}
\begin{abstract}
We develop a high-order and positivity-preserving kinetic solver for the multi-species Vlasov-Amp\`ere system that is built within a semi-Lagrangian discontinuous Galerkin (SLDG) framework. The method is designed for efficiency and accuracy across a broad range of conditions, including stiff, multi-scale regimes with strong ion-electron scale separation. Phase space is discretized via high-order discontinuous Galerkin finite elements; time stepping is handled via high-order operator splitting and a series of one-dimensional, unconditionally stable, semi-Lagrangian updates. To improve both accuracy and efficiency, each plasma species is represented on a different phase-space mesh. On every element and for each species, local positivity-preserving limiters are applied to prevent spurious undershoots and rigorously ensure positivity of the cell averages at the next time step. The accuracy and robustness of the resulting method are verified on several standard single- and two-species test cases, including manufactured solutions, two-stream instability, weak and strong Landau damping, ion-acoustic wave propagation, ion-acoustic shock formation, and KEEN wave dynamics.
\end{abstract}

\begin{keywords}
Positivity-preserving \sep Vlasov-Amp\`ere system \sep Semi-Lagrangian \sep Discontinuous Galerkin \sep Harmonic-oscillator \sep Multi-species
\end{keywords}

\maketitle

\tableofcontents

\section{Introduction}
One important classifier of plasma regimes is the non-dimensional plasma parameter: 
\begin{equation}
\Lambda \propto \sqrt{\frac{T^3}{n}},
\end{equation}
where $T$ and $n$ are characteristic temperature and number density values, respectively. For large $\Lambda$, the plasma is classified as weakly-coupled; examples of such applications include the solar wind ($\Lambda \sim 5 \times 10^{10}$), tokamak fusion devices ($\Lambda \sim 4 \times 10^8$), and the interstellar medium ($\Lambda \sim 4 \times 10^6$) (e.g., see Fitzpatrick \cite{link:Fitzpatrick2023}). Many plasmas in the weakly coupled regime require a fully kinetic description because they are far from thermodynamic equilibrium and exhibit low collisionality. In these cases, the plasma can often be accurately modeled by kinetic Vlasov equations (one equation per species), where particles interact only through electromagnetic fields \cite{article:Vlasov1968}. These equations are nonlinear, nonlocal, and posed in high-dimensional phase space. Analytical solutions are generally unavailable, so it becomes essential to develop high-fidelity numerical algorithms to resolve the complex multiscale dynamics. This has driven sustained interest in the plasma science community to develop numerical methods that are accurate, efficient, and capable of preserving important physical properties such as the positivity of the particle density functions and the conservation of mass, momentum, and energy.

\subsection{Vlasov models}
Kinetic Vlasov models can be solved numerically using either particle-based or grid-based methods. Among particle-based schemes, the Particle-in-Cell (PIC) method is the dominant choice in the plasma physics community (e.g., see Ren and Lapenta \cite{article:RenLapenta2024} for a recent review). PIC represents the distribution with a finite number of macro-particles pushed along characteristics, while the electromagnetic field is computed on a fixed background mesh. To couple the particles to the fields, charges and currents are deposited on the grid via particle-to-mesh interpolation formulas; once the fields have been updated, they are gathered back to particles via mesh-to-particle interpolation formulas (e.g., see Birdsall and Langdon \cite{BirdsallLangdon2004}). PIC is efficient and robust under severe phase-space distortion and filamentation, but it can suffer from statistical noise and interpolation errors when particle-per-mesh cell counts become too large or too small.

An alternative to PIC is to employ a fully deterministic, grid-based approach to evolve the Vlasov equation on a fixed phase-space mesh. This eliminates particle noise and can be designed for high-order accuracy, enabling the resolution of fine phase-space structure, at the expense of higher memory cost in high dimensions. Grid-based methods have been studied extensively using a broad spectrum of solvers spanning finite difference (e.g., \cite{BanksOduBergerChapmanArrighiBrunner2019,WhealtonMcGaffeyMeszaros1986}), finite element (e.g., \cite{HeathGambaMorrisonMichler2012, RossmanithSeal2011, QiuShu2011,article:RossmanithVaughan2026}), finite volume (e.g., \cite{BanksHittinger2010, Despres2008, FilbetSonnendruckerBertrand2001, FilbetSonnendrucker2003, XiongCohenRognlienXu2008}), and spectral (e.g., \cite{Giraldo1998,LeBourdiecDeVuystJacquet2006}) methods in both Eulerian and semi-Lagrangian settings.

\subsection{Semi-Lagrangian mesh-based methods}
Semi-Lagrangian time discretizations advance the Vlasov equation by tracing characteristics either forward or backward in time, depending on the specific method, on a fixed background grid, allowing large time-steps while maintaining high-order accuracy and robustness \cite{BesseMehrenberger2008}. Within this class, semi-Lagrangian schemes can be broadly categorized into split and unsplit schemes. Split schemes, originally based on Strang operator splitting \cite{Strang1968} and its adoption by Cheng and Knorr \cite{ChengKnorr1976}, decompose the Vlasov equation into free-streaming and acceleration subsystems, each advanced efficiently by semi-Lagrangian dynamics \cite{ChengKnorr1976}. Although computationally efficient and straightforward to implement in discontinuous Galerkin (DG) frameworks \cite{QiuShu2011,RossmanithSeal2011}, operator splitting introduces splitting error, which may accumulate for nonlinear kinetic models over a large number of time-steps \cite{HuotGhizzoBertrandSonnendruckerCoulaud2003}, resulting in significant numerical drift in quantities that are invariant at the continuous level \cite{CrouseillesLiuYue2025}. 

By contrast, unsplit semi-Lagrangian schemes evolve the full multi-dimensional Vlasov system without dimensional decomposition, preserving phase-space coupling and typically improving long-time accuracy; however, such methods are difficult to extend efficiently to high orders. Several unsplit semi-Lagrangian strategies have been proposed to enhance spatial and temporal consistency \cite{CaiGuoQiu2018,CaiGuoQiu2019,CrouseillesGlancHirstoagaMadauleMehrenbergerPetri2014}. Qiu and collaborators have advanced several high-order unsplit semi-Lagrangian schemes, including a conservative semi-Lagrangian DG scheme for the Vlasov-Poisson equation \cite{CaiGuoQiu2018}, an extension via Runge-Kutta exponential integrators \cite{CaiBoscarinoQiu2021,ZhengHayesChristliebQiu2025}, and a fourth-order conservative semi-Lagrangian finite volume WENO method \cite{ZhengCaiQiuQiu2022}. In parallel, conservative semi-Lagrangian methodologies based on volume remapping and positivity-preserving limiting have been developed to strengthen mass conservation and suppress nonphysical oscillations \cite{CrouseillesMehrenbergerSonnendrucker2010, ZhengCaiQiuQiu2022}. Building on this line, the semi-Lagrangian adaptive-rank (SLAR) approach exploits low-rank tensor compression to evolve the Vlasov-Poisson equation in full phase space efficiently \cite{ZhengHayesChristliebQiu2025}, and related low-rank unsplit integrators further support this direction \cite{EinkemmerLubich2018,EinkemmerLubich2019,EinkemmerOstermannPiazzola2020,EinkemmerJoseph2021}. 

Despite their improved coupling properties, unsplit semi-Lagrangian methods typically require genuinely multi-dimensional characteristic tracing and reconstruction, which increases algorithmic complexity, memory footprint, and data movement, often rendering the overall computational cost prohibitive in Vlasov-Amp\`ere or Vlasov-Maxwell settings, where both phase space dimensionality and field coupling must be handled efficiently. Consequently, for large-scale computation, it is often preferable to retain a split semi-Lagrangian discretization, where the update is reduced to a sequence of lower-dimensional substeps that are simpler to implement and parallelize, and then to control splitting-induced errors through conservative remapping, limiting, or projection mechanisms that enforce the desired invariants without sacrificing scalability \cite{CrouseillesMehrenbergerSonnendrucker2010, qiu2010, ZhengCaiQiuQiu2022}.

\subsection{Multi-species models}
Multi-species Vlasov equations introduce additional stiffness beyond the single-species case. Ion-to-electron mass and/or temperature disparities can produce widely separated temporal and velocity scales, forcing explicit methods to resolve the faster electron scales even when the target physics evolves on ion time scales. The difficulty is compounded by the fact that multi-species conservation is intrinsically global; the total energy involves the sum of kinetic energies across species plus the field energy, so any cross-species inconsistency in the field update can manifest as artificial heating or cooling. 

Multi-species Vlasov-Maxwell solvers built on Hamiltonian structure-preserving splitting report robust long-time behavior with small energy and entropy errors \cite{NufiEtAlMultispeciesVM}. An energy-conserving DG method for the two-species Vlasov-Amp\`ere system that preserves species-wise particle number and the fully discrete total energy was proposed by Cheng, Christlieb, and Zhong \cite{ChengChristliebZhong2015_TwoSpeciesVA}. Despite these efforts, few multi-species deterministic approaches that are simultaneously high-order, scalable, and positivity-preserving exist in the literature, which motivates further development. Recent reviews and progress on Vlasov solvers and multi-species Vlasov modeling and structure-preserving algorithms can be found in \cite{PalmrothGansePfauKempfEtAl2025, WilhelmTorrilhon2025NFIInstabilitiesRGD, WilhelmEtAl2025HighFidelityMultispeciesVlasov}.

\subsection{Scope of this work}
In this work, we propose an extension of the semi-Lagrangian discontinuous Galerkin (SLDG) approach developed by Rossmanith and Seal \cite{RossmanithSeal2011} and Seal \cite{thesis:Seal2012} for solving the single and multi-species Vlasov-Amp\`ere system. The design principles of this proposed approach can be summarized as follows:
\begin{itemize}
\item High-order operator splitting is used to split the spatial and velocity transport terms in the Vlasov equation;
\item The time, space, and velocity discretizations are all high-order accurate; 
\item Semi-Lagrangian time-stepping is used to achieve an unconditionally stable method, allowing large time steps;
\item The electric field is updated by exactly solving a harmonic oscillator subsystem in the operator split framework; 
\item Positivity-preserving limiters are implemented that yield provable positivity of all particle distribution functions;
\item The scheme is automatically locally (and therefore also globally) mass/charge conservative;
\item In the multi-species case, the particle distribution function for each species is defined over a different velocity range to improve computational efficiency and storage.
\end{itemize}
The resulting numerical method is implemented in the DoGPack \cite{dogpack} software package and applied to several single and multi-species test cases from the existing literature.

The remainder of this paper is organized as follows. Section~\ref{sec:model} introduces the multi-species Vlasov-Amp\`ere system, the non-dimensionalization, and a derivation of the mass, momentum, and energy conservation laws. Section~\ref{sec:numerics} presents the proposed semi-Lagrangian DG framework, including the operator splitting time integrators, the electric field update, and the positivity-preserving limiters. Numerical validation studies are reported in Section~\ref{sec:results}. Summary and concluding remarks are given in Section~\ref{sec:conclusion}. %

\section{Vlasov-Amp\`ere system}\label{sec:model}
In this section we use the following convention: (1) all dimensional variables are decorated with a tilde, and (2) non-dimensionalized variables are not. We consider a plasma made up of ${\text{S}}$ ion species, where each species satisfies a collisionless Vlasov equation of the form:
\begin{equation}
\label{Vlasov_eqn}
\text{Vlasov:} \quad \frac{\partial \tilde{f}_s}{\partial \tilde{t}} + \vec{\tilde{v}}\cdot\vec{\nabla_{\tilde{x}}} \tilde{f}_s
  + \frac{\tilde{q}_s}{\tilde{m}_s}\,\vec{\tilde{E}}\cdot \vec{\nabla_{\tilde{v}}} \tilde{f}_s
  = 0,
\end{equation}
where $\tilde{t} \in \reals_{\ge 0}$ is the time coordinate, $\vec{\tilde{x}} \in \reals^D$ is the spatial coordinate, $\vec{\tilde{v}} \in \reals^V$ is the velocity coordinate, $s\!=\!1,2,\ldots,\text{S}$ is the species index, $\tilde{q}_s$ and $\tilde{m}_s$ are the particle charge and mass, respectively,  $\tilde{f}_s\left(\tilde{t}, \vec{\tilde{x}}, \vec{\tilde{v}}\right)\!:\!\reals_{\ge 0} \times \reals^D \times \reals^V \mapsto \reals_{\ge 0}$ is the particle density function (PDF), and $D$ and $V$ are the number of spatial and velocity dimensions, respectively. In this completely collisionless setting, the particle species interact with each other only through the electrostatic force, which depends on the electric field, $\vec{\tilde{E}}\left(\tilde{t},\vec{\tilde{x}}\right)\!:\!\reals_{\ge 0} \times \reals^D \mapsto \reals^D$, and satisfies both the Gauss law and the Amp\`ere equation:
\begin{alignat}{2}
\label{gausslaw_eqn}
  \text{Gauss law:} \quad & \vec{\nabla_{\tilde{x}}} \cdot \vec{\tilde{E}} = 
  - \vec{\nabla_{\tilde{x}}} \cdot \left( \vec{\nabla_{\tilde{x}}} \, \phi \right) = \frac{\tilde{\sigma}}{\tilde\varepsilon_0}, \qquad &&
  \text{Total charge:} \quad \tilde{\sigma} := \sum_{s=1}^{\text{S}} \tilde{q}_s \int_{\reals^V} \tilde{f}_s \, d\vec{\tilde{v}},  \\
\label{ampere_eqn}
  \text{Amp\`ere:} \quad &\tilde\varepsilon_0 \frac{\partial \vec{\tilde{E}}}{\partial \tilde{t}} + \vec{\tilde{J}} = \vec{0},
  \qquad &&
  \text{Total current:} \quad   \vec{\tilde{J}} := \sum_{s=1}^{\text{S}} \tilde{q}_s \int_{\reals^V} \vec{\tilde{v}} \, \tilde{f}_s \, d\vec{\tilde{v}},
\end{alignat}
where $\varepsilon_0$ is the permittivity of free space.
The Gauss and Amp\`ere equations are connected through {\it charge conservation} (multiply \eqref{Vlasov_eqn} by $\tilde{q}_s$, sum over $s$, integrate over $\vec{v} \in \reals^V$, and assume $\tilde{f}_s \rightarrow 0$ sufficiently fast as $\| \vec{\tilde{v}} \| \rightarrow \infty$):
\begin{equation}
\frac{\partial}{\partial \tilde{t}} \left( \sum_{s=1}^{\text{S}} \tilde{q}_s \int_{\reals^V} \tilde{f}_s \, d\vec{\tilde{v}} \right) + \vec{\nabla_{\tilde{x}}} \cdot
\left( \sum_{s=1}^{\text{S}} \tilde{q}_s \int_{\reals^V} \vec{\tilde{v}} \, \tilde{f}_s \, d\vec{\tilde{v}} \right) = 0 \quad \Longrightarrow \quad
\frac{\partial {\tilde{\sigma}}}{\partial \tilde{t}} + \vec{\nabla_{\tilde{x}}} \cdot \vec{\tilde{J}} = 0.
\end{equation}

\subsection{Non-dimensionalization}
We define the following non-dimensional variables:
\begin{equation}\label{eq:dimensionless_vars}
  {t}= \left(\frac{\tilde{v}_0}{\tilde{x}_0}\right) \tilde{t},\quad
  {\vec{x}}=\frac{\vec{\tilde{x}}}{\tilde{x}_0},\quad
  {\vec{v}}=\frac{\vec{\tilde{v}}}{\tilde{v}_0},\quad
  f_s=\left(  \frac{\tilde{q}_0 \tilde{x}_0 \tilde{v}_0^{V}}{\tilde\varepsilon_0 \tilde{E}_0} \right) \tilde{f}_s,\quad
  {\vec{E}}=\frac{\vec{\tilde{E}}}{\tilde{E}_0}, \quad
  m_s = \frac{\tilde{m}_s}{\tilde{m}_0}, \quad
  q_s = \frac{\tilde{q}_s}{\tilde{q}_0},
\end{equation}
where $\tilde{x}_0$ (length), $\tilde{v}_0$ (velocity), $\tilde{m}_0$ (mass), $\tilde{q}_0$ (charge), $\tilde{E}_0$ (electric field), as well as $\tilde{n}_0$ (number density) and $\tilde{T}_0$ (temperature), which do not appear explicitly in \eqref{eq:dimensionless_vars}, are all dimensional scaling parameters that are globally constant in time, space, and velocity. In this work, we use electron parameters to non-dimensionalize by anchoring the following four values:
\begin{equation}
\begin{gathered}
\tilde{m}_0 = \tilde{m}_e \quad \left(\text{electron mass}\right), \quad
\tilde{q}_0 = \tilde{q}_e \quad \left(\text{elementary charge}\right), \\
\tilde{n}_0 = \tilde{n}_e \quad \left(\text{initial electron number density}\right), \quad
\tilde{T}_0 = \tilde{T}_e \quad \left(\text{initial electron temperature}\right),
\end{gathered}
\end{equation}
and arriving at the following derived electron parameters:
\begin{equation}
\quad \tilde{x}_0 = \sqrt{\frac{\tilde\varepsilon_0 \tilde{k}_{B} \tilde{T}_e}{\tilde{n}_e  \tilde{q}_e^2}} \quad \left(\text{Debye length}\right),
\quad \tilde{v}_0=\sqrt{\frac{\tilde{k}_B \tilde{T}_e}{\tilde{m}_e}} \quad \left(\text{thermal velocity}\right), \quad
  \tilde{E}_0=\frac{\tilde{m}_e \tilde{v}_0^2}{\tilde{q}_e \tilde{x}_0} = \frac{\tilde{k}_B \tilde{T}_e}{\tilde{q}_e \tilde{x}_0},
\end{equation}
where $\tilde{k}_B$ is the Boltzmann constant.

Plugging the non-dimensionalization of \eqref{eq:dimensionless_vars} into the dimensional Vlasov equation \eqref{Vlasov_eqn}, as well as the dimensional Gauss \eqref{gausslaw_eqn} and Amp\`ere \eqref{ampere_eqn} equations, and simplifying, yields the final non-dimensionalized form of the equations:
\begin{alignat}{2}
\label{eq:vlasov-multid}
\text{Vlasov:} \quad &\frac{\partial f_s}{\partial t} + \vec{v}\cdot\vec{\nabla_{x}} f_s
  + \frac{q_s}{m_s}\,\vec{E}\cdot \vec{\nabla_v} f_s
  = 0, \\
\label{eq:gauss-multid}
  \text{Gauss law:} \quad & \vec{\nabla_{x}} \cdot \vec{E} = 
  - \vec{\nabla_{x}} \cdot \left( \vec{\nabla_{x}} \, \phi \right) = \sigma, \qquad &&
  \text{Total charge:} \quad \sigma := \sum_{s=1}^{\text{S}} q_s \int_{\reals^V} f_s \, d\vec{v},  \\
\label{eq:ampere-multid}
  \text{Amp\`ere:} \quad &\frac{\partial \vec{{E}}}{\partial {t}} + \vec{{J}} = \vec{0},
  \qquad &&
  \text{Total current:} \quad   \vec{{J}} := \sum_{s=1}^{\text{S}} {q}_s \int_{\reals^V} \vec{{v}} \, {f}_s \, d\vec{{v}}, \\
  \text{Charge conservation:} \quad &\frac{\partial \sigma}{\partial t} + \vec{\nabla_x} \cdot \vec{J} = 0,
\end{alignat}
where the non-dimensional particle masses, $m_s$, and the non-dimensional particle charges, $q_s$, are the only remaining parameters.

\subsection{Conservation of mass, momentum, and energy}\label{subsec:invariants}
The Vlasov equation \eqref{eq:vlasov-multid} is a Hamiltonian system (e.g., see \cite{article:Morrison1980,article:Marsden1982}) and thus contains an infinite number of conserved quantities; however, numerical discretizations are generally unable to exactly conserve more than a few quantities. In many practical applications, the conservation of total mass, momentum, and energy is particularly important, especially for long-time simulations. 

Derivation of these three conservation laws begins with computing the first five moments of \eqref{eq:vlasov-multid}, multiplying by the particle mass $m_s$, summing across all species $s=1,\ldots,\text{S}$, and assuming $f_s \rightarrow 0$ sufficiently fast as $\| \vec{v} \| \rightarrow \infty$:
\begin{align}
\label{eqn:multid-mass-pre}
 \text{Mass:} &\quad \frac{\partial}{\partial t} \left( \sum_{s=1}^{\text{S}} m_s \, \rho_{s} \right) + \vec{\nabla_x} \cdot  \left( \sum_{s=1}^{\text{S}} m_s \, \rho_s \vec{u_s} \right) = 0, \\
\label{eqn:multid-mom-pre}
 \text{Momentum:} &\quad \frac{\partial}{\partial t} \left( \sum_{s=1}^{\text{S}} m_s \, \rho_s \vec{u_s} \right) + \vec{\nabla_x} \cdot \left( \sum_{s=1}^{\text{S}}  m_s \, \mat{{\mathbb E}_s} \right) - \vec{E} \left( \sum_{s=1}^{\text{S}} q_s \rho_s \right)= \vec{0}, \\
\label{eqn:multid-energy-pre}
 \text{Energy:} &\quad \frac{\partial}{\partial t} \left( \sum_{s=1}^{\text{S}} m_s \, {\mathcal E}_s \right) + \vec{\nabla_x} \cdot \left( \sum_{s=1}^{\text{S}} m_s \, \vec{{\mathcal F}_s} \right) - \vec{E} \cdot \left( \sum_{s=1}^{\text{S}} q_s \rho_s \vec{u_s} \right) = 0,
\end{align}
where
\begin{equation}
  \left\{ \rho_{s} \, , \, \, \rho_{s} \vec{u_s} \, , \, \, {\mathcal E}_s \, , \, \, \mat{{\mathbb E}_s} \, , \, \, \vec{{\mathcal F}_s} \right\} = 
  \int_{\reals^V} \left\{ 1 \, , \, \, \vec{v} \, , \, \, \frac{\|\vec{v}\|^2}{2} \, , \, \, \vec{v}\otimes\vec{v} \, , \, \, \frac{\vec{v} \| \vec{v} \|^2}{2} \right\} \, f_{s} \, d\vec{v}.
\end{equation}
We use Gauss' law \eqref{eq:gauss-multid} and the fact that the electric field is curl-free to rewrite the last term in \eqref{eqn:multid-mom-pre}
as follows:
\begin{align}
-\vec{E}  \sum_{s=1}^{\text{S}} q_s \rho_s  = -\vec{E} \sigma = - \vec{E} \, \vec{\nabla_x} \cdot  \vec{E} = 
- \vec{\nabla_x} \cdot  \mat{{\mathbb T}}, \quad
\mat{{\mathbb T}} = 
\begin{bmatrix}
 \frac{1}{2} \left(  E_{1}^2 -  E_{2}^2 - E_{3}^2\right)_{,x} + \left( E_1 E_2 \right)_{,y}  + \left( E_1 E_3 \right)_{,z}    \\
 \left( E_2 E_{1} \right)_{,x} + \frac{1}{2} \left(  E_{2}^2 - E_{1}^2 - E_{3}^2 \right)_{,y} + \left( E_2 E_{3} \right)_{,z}   \\
\left( E_3 E_1 \right)_{,x}  + \left( E_3 E_{2} \right)_{,y} 
 + \frac{1}{2} \left(  E_{3}^2 -  E_{1}^2 -  E_{2}^2  \right)_{,z}
\end{bmatrix}.
\end{align}
We use the Amp\`ere equation \eqref{eq:ampere-multid} to rewrite the last term in \eqref{eqn:multid-energy-pre}
as follows:
\begin{align}
-\vec{E} \cdot  \sum_{s=1}^{\text{S}} q_s \rho_s \vec{u_s}  = -\vec{E} \cdot \vec{J} = 
\vec{E} \cdot \frac{\partial \vec{E}}{\partial t} = 
\frac{\partial}{\partial t} \left( \frac{\|\vec{E}\|^2}{2} \right).
\end{align}
This results in the following conservation of total mass, momentum, and energy equations:
\begin{align}
\label{eqn:multid-mass-post}
 \text{Mass:} &\quad \frac{\partial}{\partial t} \left( \sum_{s=1}^{\text{S}} m_s \, \rho_{s} \right) + \vec{\nabla_x} \cdot  \left( \sum_{s=1}^{\text{S}} m_s \, \rho_s \vec{u_s} \right) = 0, \\
\label{eqn:multid-mom-post}
 \text{Momentum:} &\quad \frac{\partial}{\partial t} \left( \sum_{s=1}^{\text{S}} m_s \, \rho_s \vec{u_s} \right) + \vec{\nabla_x} \cdot \left( \sum_{s=1}^{\text{S}}  m_s \, \mat{{\mathbb E}_s} - \mat{\mathbb{T}} \right) = \vec{0}, \\
\label{eqn:multid-energy-post}
 \text{Energy:} &\quad \frac{\partial}{\partial t} \left( \frac{1}{2} \| \vec{E}\|^2 + \sum_{s=1}^{\text{S}} m_s \, {\mathcal E}_s \right) + \vec{\nabla_x} \cdot \left( \sum_{s=1}^{\text{S}} m_s \, \vec{{\mathcal F}_s} \right) = 0.
\end{align}
Under appropriate boundary conditions (e.g., zero flux or periodic), we arrive at the following conservation statements over some domain $\Omega \subset \reals^D$:
\begin{align}
 \frac{d}{dt} \int_{\Omega} \left\{ \sum_{s=1}^{\text{S}} m_s \rho_{s} \, , \, \,
 \sum_{s=1}^{\text{S}} m_s \rho_s \vec{u_s} \, , \, \, \frac{1}{2} \|\vec{E}\|^2 + \sum_{s=1}^{\text{S}} m_s {\mathcal E}_{s} \right\} \, d\vec{x} = \Biggl\{ 0 \, , \, \, \vec{0} \, , \, \, 0 \Biggr\}.
\end{align}

\subsection{1D1V single-species Vlasov-Amp\`ere system}
The single-species 1D1V Vlasov-Amp\`ere model (i.e., $S\!=\!1$, $D\!=\!1$, and $V\!=\!1$) 
is obtained by assuming a stationary, spatially uniform
ion background and dynamically evolving electrons. Thus, only the electron distribution
$f(t,x,v)$ is advanced. In non-dimensional variables, we arrive at the following system:
\begin{align}
\label{eq:va_single_vlasov}
  \text{Vlasov:} & \quad f_{,t} + v f_{,x} - E f_{,v} = 0, \\
\label{eq:va_single_ampere}
  \text{Amp\`ere:} & \quad E_{,t} + J = 0,
  \quad
  J = J_0 - \rho u, \quad
  \rho u = \int_{-\infty}^{\infty} v f \,dv,
\end{align}
on the domain $x \in \left(x_{\text{min}}, x_{\text{max}} \right)$ and $v \in \left(-\infty, \infty\right)$,
where $J_0$ is the background ion current. The initial electric field
can be computed from the Gauss law:
\begin{equation}\label{eq:va_single_poisson}
   \text{Gauss law:} \quad E_{,x} = \rho_0 - \rho, \quad \rho = \int_{-\infty}^{\infty} f \, dv,
\end{equation}
where $\rho_0$ is the constant background ion charge density. As an example, for problems with periodic boundary conditions, $\rho_0$ and $J_0$ are taken as the average initial density and momentum so that the electric field is periodic on the domain $x\in \left(x_{\text{min}}, x_{\text{max}} \right)$ (see \eqref{eq:va_single_poisson}) and the net electric field across this domain remains zero (see \eqref{eq:va_single_ampere}):
\begin{equation}
\rho_0 = {\left(x_{\text{max}}-x_{\text{min}}\right)^{-1}} \int_{x_{\text{min}}}^{x_{\text{max}}} \rho(t=0,x) \, dx, \quad
J_0 = {\left(x_{\text{max}}-x_{\text{min}}\right)^{-1}} \int_{x_{\text{min}}}^{x_{\text{max}}} \rho u(t=0,x) \, dx.
\end{equation}
With boundary conditions that result in net zero flux through the boundaries, we arrive at the following conservation of mass, momentum, and total energy:
\begin{align}
\label{eqn:conservation_1d1v_single}
 \frac{d}{dt} \int_{x_{\text{min}}}^{x_{\text{max}}} \left\{ \rho, \, 
 \rho u, \, \frac{1}{2} E^2 + {\mathcal E} \right\} \, dx = \Bigl\{ 0 \, , \, \, 0 \, , \, \, 0 \Bigr\}.
\end{align}

\subsection{1D1V two-species Vlasov-Amp\`ere system}
\label{subsec:two_species_va}
The dimensionless two-species 1D1V Vlasov-Amp\`ere system (i.e., $S\!=\!2$, $D\!=\!1$, and $V\!=\!1$) models an electrostatic collisionless plasma in which ions and electrons are coupled through the self-consistent electric field.
Let $f_s\!=\!f_s(t,x,v)$ denote the phase space distribution of species
$s\in\{e, i\}$ with charge $q_s$ and mass $m_s$. The governing equations are
\begin{align}
\label{eq:va_two_species}
  \text{Vlasov:} & \quad  f_{s,t} + v \, f_{s,x} + \frac{q_s}{m_s} E f_{s,v} = 0, \quad s \in \left\{ e, i \right\}, \\
  \label{eq:va_two_species_E}
  \text{Amp\`ere:} & \quad E_{,t} + J = 0, \quad 
  J = \sum_{s\in \left\{ e, i \right\}} q_s \rho_s u_s, \quad
  \rho_s u_s = \int_{-\infty}^{\infty} v\,f_s\,dv,
\end{align}
on the domain $x \in \left(x_{\text{min}}, x_{\text{max}} \right)$ and $v \in \left(-\infty, \infty\right)$.
The initial electric field can be computed from the Gauss law:
\begin{equation}\label{eq:va_two_species_poisson}
   \text{Gauss law:} \quad E_{,x} = \sigma, \quad \sigma = 
   \sum_{s\in \left\{ e, i \right\}} q_s \rho_s, \quad
   \rho_s = \int_{-\infty}^{\infty} f_s \, dv.
\end{equation}
With boundary conditions that result in net zero flux through the boundaries, we arrive at the following conservation of mass, momentum, and total energy:
\begin{align}
\label{eqn:conservation_1d1v_multi}
 \frac{d}{dt} \int_{x_{\text{min}}}^{x_{\text{max}}} \left\{ \sum_{s\in \left\{ e, i \right\}} m_s \rho_{s}, \, 
 \sum_{s \in \left\{ e, i \right\}} m_s \rho_s u_s, \, \frac{1}{2} E^2 + \sum_{s\in \left\{ e, i \right\}} m_s {\mathcal E}_{s} \right\} \, dx = 
 \Bigl\{ 0 \, , \, \, 0 \, , \, \, 0 \Bigr\}.
\end{align}

\section{Numerical methods}
\label{sec:numerics}
This section describes the proposed positivity-preserving semi-Lagrangian discontinuous Galerkin method for the multi-species 1D1V Vlasov-Amp\`ere system and summarizes its key properties. We make use of high-order operator splitting to convert the Vlasov-Amp\`ere system into two parts: (1) a free-streaming subproblem in which the electric field is assumed time-independent, and (2) an acceleration subproblem in which the Amp\`ere equation is used to update the electric field. On the acceleration branch, we present a harmonic-oscillator field-current update. Positivity limiters are introduced to produce positive element averages and to prevent negative values of the particle distribution function at key quadrature points.

\subsection{Operator splitting}\label{subsec:splitting}
We adopt an operator splitting strategy similar to Cheng and Knorr \cite{ChengKnorr1976} that reduces the Vlasov-Amp\`ere system into subproblems that can be handled efficiently via semi-Lagrangian DG updates and enables direct control of the discrete particle-field energy exchange. 
We split the evolution into a free-streaming and an acceleration subproblem. The free-streaming subproblem advances transport in physical space while the electric field remains unchanged,
\begin{equation}\label{eq:free-streaming}
\textnormal{Problem $\mathcal{A}$:} \quad
f_{s,t} + v f_{s,x} = 0 \quad  \forall s, \quad E_{,t} = 0,
\end{equation}
while the acceleration subproblem advances transport in velocity space, together with the electric field update,
\begin{equation}\label{eq:acceleration}
\textnormal{Problem $\mathcal{B}$:} \quad
f_{s,t} + \frac{q_s}{m_s} E f_{s,v} = 0 \quad \forall s, \quad 
E_{,t} + J = 0.
\end{equation}

In the free-streaming subproblem $\mathcal{A}$ \eqref{eq:free-streaming}, $f_s$ is transported in $x$ at each fixed $v$, in the acceleration subproblem $\mathcal{B}$ \eqref{eq:acceleration}, $f_s$ is transported in $v$ at each fixed $x$, although $E$ is time-dependent and is updated through Amp\`ere's equation. This separation into unidirectional advection equations is the key structure exploited by the proposed semi-Lagrangian DG discretization.

\subsubsection{Problem $\mathcal{A}$: Constant-coefficient advection}
\label{subsection:problemA}
Of the two steps in the operator splitting, Problem $\mathcal{A}$ \eqref{eq:free-streaming} is the simpler one and can be solved exactly via the method of characteristics. We look for curves in the $(t,x)$-plane parameterized by $c \in \reals$ along which $f_s$ is constant:
\begin{equation}
\frac{d}{dc} f_{s}\left(t(c), x(c), v\right) = \frac{dt}{dc} \frac{\partial f_s}{\partial t} 
+ \frac{dx}{dc} \frac{\partial f_s}{\partial x}  =
f_{s,t} + v f_{s,x} = 0 \quad \Longrightarrow \quad
\frac{dt}{dc} = 1, \quad \frac{dx}{dc} = v, \quad \frac{df_s}{dc} = 0.
\end{equation}
The curves along which $f_s$ is constant are straight lines in the $(t,x)$-plane given by $x - vt = \text{constant}$.
From this, we arrive at the following exact solution of Problem $\mathcal{A}$ \eqref{eq:free-streaming} at some time $t=t^n + \Delta t$, written here with initial condition $f_s\left(t^n,x,v\right)$:
\begin{equation}
\label{eqn:char-method-A}
	f_s\left( t^n + \Delta t, x, v \right) = f_s \bigl( t^n, x - v \Delta t, v \bigr).
\end{equation}
Furthermore, we note that the electric field remains unchanged in Problem $\mathcal{A}$ \eqref{eq:free-streaming}:
\begin{equation}
E\left( t^n + \Delta t, x \right) = E \bigl( t^n, x \bigr).
\end{equation}

\begin{lemma}\label{thm:problemA_invariant}
With boundary conditions at $x\!=\!x_{\text{min}}$ and $x\!=\!x_{\text{max}}$ that result in no net flux through the boundaries, the free-streaming equation \eqref{eq:free-streaming} conserves total mass, momentum, and energy.
\end{lemma}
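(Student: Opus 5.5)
The plan is to take moments of the free-streaming equation \eqref{eq:free-streaming} directly, species by species, and show that each conserved density evolves by a pure spatial divergence with no source term. Multiplying $f_{s,t} + v f_{s,x} = 0$ by $m_s$, $m_s v$, and $m_s v^2/2$, integrating over $v \in (-\infty,\infty)$, and summing over $s$ gives, since $v$ is independent of $x$ and $t$,
\begin{equation*}
\frac{\partial}{\partial t}\sum_{s} m_s \int \left\{1,\, v,\, \tfrac{v^2}{2}\right\} f_s \, dv + \frac{\partial}{\partial x}\sum_s m_s \int \left\{v,\, v^2,\, \tfrac{v^3}{2}\right\} f_s \, dv = \{0,0,0\},
\end{equation*}
that is, the analogues of \eqref{eqn:multid-mass-pre}--\eqref{eqn:multid-energy-pre} in which the electric-field terms are absent. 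Only the assumption that $f_s$ decays fast enough in $v$ for these moments to exist is needed; no integration by parts in $v$ is required, because Problem $\mathcal{A}$ contains no $v$-derivative.

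Next I will integrate over $x \in (x_{\text{min}}, x_{\text{max}})$. The flux terms become boundary differences of $\sum_s m_s\rho_s u_s$, $\sum_s m_s \mathbb{E}_s$, and $\sum_s m_s\mathcal{F}_s$, and these vanish by the no-net-flux hypothesis (for example, under periodic boundary conditions). This yields conservation of $\int \sum_s m_s\rho_s\,dx$, $\int \sum_s m_s\rho_s u_s\,dx$, and $\int \sum_s m_s \mathcal{E}_s\,dx$. For the total energy in \eqref{eqn:conservation_1d1v_multi} I also need $\int \tfrac12 E^2\,dx$ to be constant, and this follows immediately from $E_{,t}=0$ in \eqref{eq:free-streaming}. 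Equivalently, I could use the exact solution \eqref{eqn:char-method-A}: for each fixed $v$ the map $x \mapsto x - v\Delta t$ is a translation, which preserves $\int f_s\,dx$ on a periodic domain, so any $v$-weighted integral is preserved as well.

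The step needing care is interpreting ``no net flux'' so that the boundary terms vanish for all three fluxes, including the higher moments $\int v^2 f_s\,dv$ and $\int v^3 f_s\,dv$. I would state the hypothesis as equality of these moments at $x_{\text{min}}$ and $x_{\text{max}}$ (automatic under periodicity). The other point is that momentum is conserved even though the Maxwell stress $\mathbb{T}$ is not treated here: the electric-field contribution is simply absent from Problem $\mathcal{A}$ and is accounted for in Problem $\mathcal{B}$.
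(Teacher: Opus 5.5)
Your proposal is correct and takes essentially the same route as the paper. The paper likewise multiplies by $m_s v^p$ ($p=0,1,2$) and sums over species to obtain $\mom_{p,t} + \mom_{p+1,x} = 0$, kills the flux terms with the no-net-flux boundary condition, and uses $E_{,t}=0$ for the field energy. Your observation that no integration by parts in $v$ is actually needed, since Problem $\mathcal{A}$ has no $v$-derivative, is accurate.
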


\begin{proof}
We multiply \eqref{eq:free-streaming} by $m_s$ and sum over all $s$. We then multiply this by $v^p$ for any non-negative integer $p$, integrate over $v\in(-\infty,\infty)$, integrate-by-parts, and assume $f_s \rightarrow 0$ sufficiently fast as $\| \vec{v} \| \rightarrow \infty$, which results in
\begin{equation*}
	\mom_{p,t} + \mom_{p+1,x} = 0 \quad \text{where} \quad \mom_p := \sum_{s=1}^{\text{S}} m_s \int_{-\infty}^{\infty} v^p
	f_s \, dv.
\end{equation*}
If we take $p\!=\!0$, $1$, and $2$, and note that the electric field, $E$, is constant in time in Problem $\mathcal{A}$ \eqref{eq:free-streaming}, we arrive at conservation of total mass, momentum, and energy as stated in
either \eqref{eqn:conservation_1d1v_single} (in the single-species case) or \eqref{eqn:conservation_1d1v_multi} (in the multi-species case).
\end{proof}

\subsubsection{Problem $\mathcal{B}$: Harmonic oscillator and time-dependent advection}
\label{subsection:problemB}
Just as in \S\ref{subsection:problemA}, we look to solve Problem ${\mathcal B}$ \eqref{eq:acceleration} via the method of characteristics. We look for curves in the $(t,v)$-plane parameterized by $c \in \reals$ along which $f_s$ is constant:
\begin{equation}
\frac{d}{dc} f_{s}\left(t(c), x, v(c)\right) = \frac{dt}{dc} \frac{\partial f_s}{\partial t} 
+ \frac{dv}{dc} \frac{\partial f_s}{\partial v}  =
f_{s,t} + \frac{q_s}{m_s} E f_{s,v} = 0 \quad \Longrightarrow \quad
\frac{dt}{dc} = 1, \quad \frac{dv}{dc} = \frac{q_s}{m_s} E, \quad \frac{df_s}{dc} = 0.
\end{equation}
The curves along which $f_s$ is constant depend on the time-dependent (but velocity-independent) electric field. In particular, we arrive at the following exact solution of Problem $\mathcal{B}$ \eqref{eq:acceleration} at some time $t\!=\!t^n + \Delta t$, written here with initial condition $f_s\left(t^n,x,v\right)$:
\begin{equation}
\label{eqn:char-method-B}
	f_s\left( t^n + \Delta t, x, v \right) = f_s \left( t^n, x , v - \Delta t  \frac{q_s}{m_s}  E^{\star}(x)  \right), \quad
	\text{where} \quad
	E^{\star}(x) = \frac{1}{\Delta t}\int_{t^n}^{t^n+\Delta t} E\left(\tau, x \right) \, d\tau.
\end{equation}
Note that $E^{\star}(x)$ is the time-averaged electric field over $t \in \left[t^n, t^n + \Delta t \right]$.

The missing ingredient in the above solution is that we need to know the electric field; fortunately, finding the exact electric field in Problem ${\mathcal B}$ \eqref{eq:acceleration} is achievable. We first note that each species density, $\rho_s$, is constant in time in Problem ${\mathcal B}$ \eqref{eq:acceleration}:
\begin{equation}
\label{eq:problemB_mass}
\rho_{s,t} = \left(\int_{-\infty}^{\infty} f_s\,dv\right)_{,t} =
\int_{-\infty}^{\infty} f_{s,t} \,dv = -\frac{q_s}{m_s}E\int_{-\infty}^{\infty} f_{s,v}\,dv =0.
\end{equation}
Next, we compute the time derivative of the total current:
\begin{equation}\label{eq:J_evolution}
J_{,t} = \sum_{s=1}^{\text{S}} q_s\int_{-\infty}^{\infty} v\, f_{s,t}\,dv =
-\sum_{s=1}^{\text{S}} \frac{q_s^2}{m_s}E\int_{-\infty}^{\infty} v\,f_{s,v}\,dv =
\left(\sum_{s=1}^{\text{S}} \frac{q_s^2}{m_s}\rho_s^n\right)\!E
= \left(\omega^n\right)^2\!E,
\end{equation}
where integration-by-parts in $v$ was used, and the following quantity was defined:
\begin{equation}
\label{eqn:omega_def}
\omega^n\!\left(x\right) := \sqrt{\sum_{s=1}^{\text{S}} \frac{q_s^2}{m_s}\rho_s^n\!\left(x\right)}.
\end{equation}
Combining \eqref{eq:J_evolution} with Amp\`ere's law in \eqref{eq:acceleration} yields the following harmonic oscillator problem:
\begin{equation}\label{eq:field_current_system}
E_{,t} = -J,
\qquad
J_{,t} = \left(\omega^n\right)^2\!E,
\end{equation}
which can be solved exactly. We write the solution here with initial data $E\left(t^n,x\right)$ and $J\left(t^n,x\right)$ at time $t\!=\!t^n$:
\begin{align}
\label{eq:E_update}
E\left(t,x\right)
&= E^n\left(x\right) \cos\Bigl(\omega^n\!\left(x\right)\bigl(t - t^n\bigr)\Bigr)
-\frac{J^n\!\left(x\right)}{\omega^n\!\left(x\right)}\sin\Bigl(\omega^n\!\left(x\right) \bigl(t - t^n\bigr) \Bigr),
\\
J\left(t,x\right) &= J^n\!\left(x\right) \cos\Bigl(\omega^n\!\left(x\right) \bigl( t - t^n \bigr) \Bigr)
+\omega^n\!\left(x\right) E^n\!\left(x\right) \sin\Bigl(\omega^n\!\left(x\right) \bigl(t - t^n \bigr) \Bigr).
\label{eq:J_update}
\end{align}
This harmonic-oscillator solution also provides us with the missing time-averaged electric field, $E^{\star}$, needed in \eqref{eqn:char-method-B}:
\begin{equation}
\label{eq:E_star_HO}
    E^{\star}(x) = \frac{1}{\Delta t} \int_{t^n}^{t^{n}+\Delta t}E(\tau,x)\,d\tau
    = \frac{E^n(x)}{\omega^n(x)}\frac{\sin\bigl(\Delta t \, \omega^n(x) \bigr)}{\Delta t}
    - \frac{J^n(x)}{\left(\omega^n(x)\right)^2} \frac{1-\cos\bigl(\Delta t \, \omega^n(x)\bigr)}{\Delta t}.
\end{equation}

\begin{lemma}\label{thm:HO_invariant}
The acceleration and Amp\`ere system represented by \eqref{eq:acceleration} conserves total mass and energy.
\end{lemma}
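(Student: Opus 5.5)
The argument mirrors the proof of Lemma~\ref{thm:problemA_invariant}: take velocity moments of the Vlasov equation in Problem~$\mathcal{B}$ \eqref{eq:acceleration}, weight by $m_s$, sum over species, and combine with the Amp\`ere equation. This time no spatial flux appears, because $x$ is only a parameter in Problem~$\mathcal{B}$. The conservation is therefore pointwise in $x$, and it holds after integrating over $(x_{\text{min}},x_{\text{max}})$ without any assumption on boundary conditions.

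\textbf{Mass.} This is already contained in \eqref{eq:problemB_mass}. Each $\rho_s$ satisfies $\rho_{s,t}=0$, obtained by integrating $f_{s,t}=-\tfrac{q_s}{m_s}E f_{s,v}$ over $v$ and using $f_s\to 0$ as $|v|\to\infty$. Hence $\bigl(\sum_s m_s\rho_s\bigr)_{,t}=0$ pointwise, and the total mass is constant.

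\textbf{Energy.} Multiply the Vlasov equation in \eqref{eq:acceleration} by $m_s v^2/2$ and integrate over $v$. Integration by parts, with boundary terms dropped by decay, gives
\begin{equation*}
\Bigl(m_s\,\mathcal{E}_s\Bigr)_{,t} = -q_s E\int_{-\infty}^{\infty}\frac{v^2}{2}\, f_{s,v}\,dv = q_s E\int_{-\infty}^{\infty} v\, f_s\,dv = q_s E\,\rho_s u_s .
\end{equation*}
Summing over $s$ yields $\bigl(\sum_s m_s\mathcal{E}_s\bigr)_{,t}=E J$. The Amp\`ere equation $E_{,t}=-J$ gives $\bigl(\tfrac12E^2\bigr)_{,t}=-EJ$. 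Adding the two relations shows that $\tfrac12E^2+\sum_s m_s\mathcal{E}_s$ is constant in time at every $x$, and integrating over $x$ gives conservation of total energy in the sense of \eqref{eqn:conservation_1d1v_multi}.

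As a consistency check, the same conclusion follows from the harmonic oscillator \eqref{eq:field_current_system}. The kinetic-energy change equals $\int EJ\,dt$, and along the exact solution \eqref{eq:E_update}--\eqref{eq:J_update} the quantity $(\omega^n)^2E^2+J^2$ is invariant. Since the moment computation above is direct, I would not rely on this check.

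The main obstacle is justifying the integrations by parts in $v$: the decay assumption on $f_s$ has to cover $v^2 f_s\to 0$, the same standing assumption used earlier in the paper. One should also remark why momentum is \emph{not} claimed. The first moment gives $\bigl(\sum_s m_s\rho_s u_s\bigr)_{,t}=E\sigma$, which is generally nonzero pointwise; its integral over $x$ can be rewritten as a flux only through Gauss's law, and that flux divergence is supplied by Problem~$\mathcal{A}$, not by Problem~$\mathcal{B}$.
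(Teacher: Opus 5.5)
Your proof is correct and follows the paper's argument essentially step for step: mass conservation comes from $\rho_{s,t}=0$ in \eqref{eq:problemB_mass}, and energy conservation comes from taking the $v^2/2$ moment, integrating by parts to get $\bigl(\sum_s m_s\mathcal{E}_s\bigr)_{,t}=EJ$, and cancelling this against $\bigl(\tfrac12 E^2\bigr)_{,t}=-EJ$ from Amp\`ere's law. Your extra remarks go beyond what the paper writes but are sound and not needed for the lemma: conservation holds pointwise in $x$ with no boundary assumptions, $(\omega^n)^2E^2+J^2$ is an invariant, and momentum is not claimed because Gauss's law is not maintained within Problem~$\mathcal{B}$.
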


\begin{proof}
Total mass conservation follows multiplying \eqref{eq:problemB_mass} by $m_s$, summing over the species index, $s$,
and integrating over $v\in\left(-\infty,\infty\right)$. Total energy conservation is established by multiplying the first equation in \eqref{eq:acceleration} by 
$v^2/2$ and integrating over $v\in\left(-\infty,\infty\right)$:
\begin{equation*}
\mathcal{E}_{s,t} + \frac{q_s}{m_s} E \int_{-\infty}^{\infty} \frac{1}{2} v^2 f_{s,v} = 0 \quad
\Longrightarrow \quad
\mathcal{E}_{s,t} - \frac{q_s}{m_s} E \int_{-\infty}^{\infty} v f_{s} = 0 \quad
\Longrightarrow \quad
\mathcal{E}_{s,t} - \frac{1}{m_s} E q_s \rho_s u_s = 0.
\end{equation*}
Multiplying this result by $m_s$ and summing over the species index, $s$, yields the desired result:
\begin{equation*}
\frac{\partial}{\partial t} \sum_{s=1}^{\text{S}} m_s \mathcal{E}_{s} - E J = 0 
\quad \Longrightarrow \quad
\frac{\partial}{\partial t} \sum_{s=1}^{\text{S}} m_s \mathcal{E}_{s} + E E_{,t} = 0
\quad \Longrightarrow \quad
\frac{\partial}{\partial t} \left( \sum_{s=1}^{\text{S}} m_s \mathcal{E}_{s} + \frac{1}{2} E^2 \right) = 0,
\end{equation*}
where to get from the first to the middle expression, we used Amp\`ere's equation from Problem $\mathcal{B}$ \eqref{eq:acceleration}.
\end{proof} 

\subsection{Discontinuous Galerkin (DG) framework and notation}\label{subsec:discretization}
The discussion in \S\ref{subsec:splitting} assumes an exact representation of the distribution functions, $f_s$, for any position $x \in \reals$ and velocity $v\in \reals$. In practice, the best we can do is represent $f_s$ using a finite-dimensional approximation. The same is true for the electric field, $E$, over $x\in \reals$. In this work, we follow Rossmanith and Seal \cite{RossmanithSeal2011} and represent the numerical distribution functions and the electric field using a discontinuous Galerkin approach (e.g., see Cockburn and Shu \cite{CockburnShu1998_JCP_V} and references therein).

\subsubsection{DG representation of the PDF}
In this work, we employ a two-dimensional DG representation on Cartesian meshes on the computational domain, where the velocity mesh depends on the plasma species $s$:
\begin{equation}
\Omega^{(s)} = \Omega_x \times \Omega_v^{(s)} = \left( x_{\min}, x_{\max} \right) \times \left( v^{(s)}_{\min}, v^{(s)}_{\max} \right), \quad \text{for each} \, s \in \left\{e, i \right\}.
\end{equation}
We partition $\Omega^{(s)}$ into $N_x \times N_v$ non-overlapping rectangles of the form:
\begin{equation}
\Tm^{(s)}_{ij}=\left(x_i-\tfrac{\Delta x}{2}, \, x_i+\tfrac{\Delta x}{2}\right) \times
          \left(v^{(s)}_j-\tfrac{\Delta v^{(s)}}{2}, \, v^{(s)}_j+\tfrac{\Delta v^{(s)}}{2}\right), \quad
          i=1,\ldots,N_x, \quad j=1,\ldots,N_v,
\end{equation}
with centers and grid spacings given by
\begin{equation}
   x_i = x_{\text{min}} + \left( i - \frac{1}{2} \right){\Delta x}, \quad
   v^{(s)}_j = v^{(s)}_{\text{min}} + \left( j - \frac{1}{2} \right){\Delta v^{(s)}}, \quad
   {\Delta x} = \frac{x_{\text{max}}-x_{\text{min}}}{N_x}, \quad
   {\Delta v}^{(s)} = \frac{v^{(s)}_{\text{max}}-v^{(s)}_{\text{min}}}{N_v}.
\end{equation}
Note that the number of velocity levels, $N_v$, is species-independent, while the velocity range, $v \in
\left(v^{(s)}_{\text{min}}, \, v^{(s)}_{\text{max}}\right)$, and the velocity grid spacing, $\Delta v^{(s)}$, depends on the species.
Over the domain $\Omega^{(s)}$ we then define the broken finite element space 
\begin{equation}
\label{eqn:dg-fem-space}
\mathcal{W}^{\left(\Delta x, \Delta v^{(s)}\right)}:=\left\{w^{\Delta x}\in L^\infty\!\left(\Omega^{(s)}\right): w^{\left(\Delta x,\Delta v^{(s)}\right)}\bigl\vert_{\mathcal{T}^{(s)}_{ij}}\in \mathbb{P}\left(M_\text{deg}\right) \quad \forall \mathcal{T}^{(s)}_{ij} \in \Omega^{(s)}\right\},
\end{equation}
where $\mathbb{P}(M_\text{deg})$ is the set of all polynomials of total degree at most $M_\text{deg} \ge 0$.
The representation is called {\it discontinuous} because it does not require continuity between neighboring elements.

Each cell, $\Tm^{(s)}_{ij}$, can be mapped to the reference element $(\xi,\eta)\in(-1,1)^2$ via the affine transformation:
\begin{equation}
x\left(\xi\right)=x_i+\xi\,\tfrac{\Delta x}{2},\qquad v\left(\eta\right)=v^{(s)}_j+\eta\,\tfrac{\Delta v^{(s)}}{2}.
\end{equation} 
On this reference element, we define the orthonormal Legendre polynomial basis:
\begin{equation}
\label{eqn:orthonormal}
\Bigl\{\phi_{\ell}(\xi,\eta)\Bigr\}_{\ell=1}^{M_\text{basis}}, \quad \text{where} \quad
\left\langle \phi_{\ell},\phi_{k} \right\rangle
:=\frac{1}{4} \iint_{-1}^{1}\phi_{\ell}(\xi,\eta)\,\phi_{k}(\xi,\eta)\,d\xi\,d\eta
=\begin{cases}
1 &  \text{if} \quad k=\ell, \\
0 &  \text{if} \quad k \ne \ell,
\end{cases}
\end{equation}
where $M_\text{basis}\!=\!\left(M_\text{deg}+1\right)\left(M_\text{deg}+2\right)/2$ is the number of basis functions. As an example, the $\mdeg\!=3$ ($\mbasis\!=10$) basis, which is used extensively throughout this work, contains the following mutually orthonormal basis functions:
\begin{equation}
\label{eqn:legendre_basis}
\begin{split}
\mathbb{P}\!\left(3\right) = \text{Span}\Biggl\{ & 1, \, \sqrt{3}\xi, \, \sqrt{3}\eta, \, \frac{\sqrt{5}}{2}\left(3\xi^2-1\right), \, 3\xi \eta, \, \frac{\sqrt{5}}{2}\left(3\eta^2-1\right), \, \frac{\sqrt{7}}{2}\left(5\xi^3-3\xi\right), 
\\ & \frac{\sqrt{15}}{2} \eta \left(3\xi^2-1\right),
\, \frac{\sqrt{15}}{2} \xi \left(3\eta^2-1\right),
\, \frac{\sqrt{7}}{2}\left(5\eta^3-3\eta\right)
\Biggr\}.
\end{split}
\end{equation}
Numerical solutions have the following representation on each element:
\begin{align}\label{eqn:q_ansatz}
   f^{\left(\Delta x, \Delta v\right)}_s\left(t,x\left(\xi\right),v\left(\eta\right)\right)\big|_{\Tm^{(s)}_{ij}} 
   &:= F_{\left(i,j,s\right)}(t,\xi,\eta)
:=\sum_{\ell=1}^{M_\text{basis}} Q_{\left(i, j, s, \ell\right)}(t)\,\phi_{\ell}(\xi,\eta),
\\ \text{where} \quad
Q_{\left(i, j, s, \ell\right)}(t) &\approx \left\langle \phi_{\ell}, \, f_s\left(t,x\left(\xi\right),v\left(\eta\right) \right)\big|_{\Tm^{(s)}_{ij}} \right\rangle.
\end{align}

We note that given a sufficiently smooth particle density function, $f_s$, in $(x,v)\in\reals\times\reals$, the discontinuous Galerkin ansatz \eqref{eqn:q_ansatz} produces an approximation with the following accuracy:
\begin{equation}
\label{eqn:theoretical_order_accuracy}
\max_{\left(\xi,\eta\right) \in [-1,1]^2} \left| f_s\bigl(t,x\left(\xi\right),v\left(\eta\right)\bigr)\big|_{\Tm^{(s)}_{ij}} 
   - F_{\left(i,j,s\right)}(t,\xi,\eta) \right| = {\mathcal O}\left( \Delta x^\morder + \Delta v^\morder \right) \quad \text{where} \quad
   \morder = \mdeg + 1.
\end{equation}
In other words, $\morder=\mdeg+1$ is the order of accuracy of the DG representation \cite{CockburnShu1998_JCP_V}.

\subsubsection{DG representation of the electric field and PDF moments}
In order to represent the electric field, $E(t,x)\!\!:\!\!\reals_{\ge 0} \times \reals \mapsto \reals$, we introduce a one-dimensional version of the Legendre basis:
\begin{equation}
\label{eqn:legendre_basis_1d}
\mathbb{P}_{\xi}^{\text{1D}}\!\left(\mdeg\right) = \text{Span}\Biggl\{ \phi^{\text{1D}}_1\!\left(\xi\right), \,
\phi^{\text{1D}}_2\!\left(\xi\right), \, \ldots, \, \phi^{\text{1D}}_{\morder}\!\left(\xi\right) \Biggr\}, \quad
\text{where} \quad \morder = \mdeg + 1.
\end{equation}
The basis functions are mutually orthonormal in the following sense:
\begin{equation}
\left\langle \phi^{\text{1D}}_{\ell},\phi^{\text{1D}}_{k} \right\rangle_{\text{1D}}
:=\frac{1}{2} \int_{-1}^{1} \phi^{\text{1D}}_{\ell}(\xi)\,\phi^{\text{1D}}_{k}(\xi)\,d\xi
=\begin{cases}
1 &  \text{if} \quad k=\ell, \\
0 &  \text{if} \quad k \ne \ell.
\end{cases}
\end{equation}
For example, the basis for the case $\mdeg\!=3$, which is widely used throughout this work, is given by
\begin{equation}
\mathbb{P}_{\xi}^{\text{1D}}\!\left(3\right) = \text{Span}\Biggl\{ 1, \, \sqrt{3}\xi, \, \frac{\sqrt{5}}{2}\left(3\xi^2-1\right), \, \frac{\sqrt{7}}{2}\left(5\xi^3-3\xi\right) \Biggr\}.
\end{equation}

The electric field then has the following representation on each 1D element, $\Tm^{\text{1D}}_i\!=\!\left( x_i - {\Delta x}/{2}, \, x_i + {\Delta x}/{2} \right)$:
\begin{align}
\label{eqn:Ef_ansatz}
   E^{\Delta x}\!\left(t,x\left(\xi\right)\right)\big|_{\Tm^{\text{1D}}_{i}} 
   &:= \sum_{\ell=1}^{\morder} {E}_{\left(i, \ell\right)}(t) \, \phi^{\text{1D}}_{\ell}(\xi),
\quad \text{where} \quad
{E}_{\left(i, \ell\right)}(t) \approx \left\langle \phi^{\text{1D}}_{\ell}, \, E\left(t,x\left(\xi\right) \right)\big|_{\Tm^{\text{1D}}_{i}} \right\rangle_{\text{1D}}.
\end{align}
The first three moments of the DG-represented particle density function (PDF) \eqref{eqn:q_ansatz}, have the following form:
\begin{align}
\label{eqn:dg-moments-expansion}
  \Bigl\{ \rho_s^{\Delta x}, \, \rho u^{\Delta x}_s, \, {\mathcal E}^{\Delta x}_s \Bigr\}\!\left(t,x\left(\xi\right)\right)\big|_{\Tm^{\text{1D}}_{i}}
  &:= \sum_{\ell=1}^{\morder} \Bigl\{ {\rho}_{\left(i, s, \ell\right)}, \, 
  {\rho u}_{\left(i, s, \ell\right)}, \, {\mathcal{E}}_{\left(i, s, \ell\right)} \Bigr\}(t) \, \phi^{\text{1D}}_{\ell}(\xi),
\end{align}
where
\begin{align}
\label{eqn:dg-moments-expansion-integrals}
\Bigl\{ {\rho}_{\left(i, s, \ell\right)}, \, 
  {\rho u}_{\left(i, s, \ell\right)}, \, {\mathcal{E}}_{\left(i, s, \ell\right)} \Bigr\} = 
\frac{\Delta v^{(s)}}{4} \sum_{j=1}^{N_v}\iint_{-1}^{1} \left\{ 1, \, v^{(s)}_j + \eta \frac{\Delta v^{s}}{2}, \, \frac{1}{2} \left( v^{(s)}_j + \eta \frac{\Delta v^{s}}{2} \right)^2 \right\} F_{\left(i,j,s\right)} \, \phi^{\text{1D}}_{\ell} \, d\xi d\eta.
\end{align}
If we define the following sums:
\begin{equation}
\left\{ M_{(i,s,\ell)}^{0}, \, M_{(i,s,\ell)}^{1}, \, M_{(i,s,\ell)}^{2} \right\} := \Delta v^{(s)} \sum_{j=1}^{N_v} \left\{ 1, \, v^{(s)}_j, \, \frac{1}{2} {\left(v^{(s)}_j\right)^2} + \frac{1}{24} {\left(\Delta v^{s}\right)^2}  \right\} Q_{\left(i,j,s,\ell\right)},
\end{equation}
and if we take $\mdeg=3$, then we can reduce \eqref{eqn:dg-moments-expansion-integrals} to the following:
\begin{align}
\vec{{\rho}_{\left(i, s\right)}} &= \left\{ M_{(i,s,1)}^{0}, \, M_{(i,s,2)}^{0}, \, M_{(i,s,4)}^{0}, \, M_{(i,s,7)}^{0} \right\}, \\
\vec{{\rho u}_{\left(i, s\right)}} &= \left\{ M_{(i,s,1)}^{1}, \, M_{(i,s,2)}^{1}, \, M_{(i,s,4)}^{1}, \, M_{(i,s,7)}^{1} \right\}
+ \frac{\Delta v^{(s)}}{2\sqrt{3}} \left\{ M_{(i,s,3)}^{0}, \, M_{(i,s,5)}^{0}, \, M_{(i,s,8)}^{0}, \, 0 \right\}, \\
\begin{split}
\vec{\mathcal{E}_{\left(i, s\right)}} &= \left\{ M_{(i,s,1)}^{2}, \, M_{(i,s,2)}^{2}, \, M_{(i,s,4)}^{2}, \, M_{(i,s,7)}^{2} \right\} + \frac{\Delta v^{(s)}}{2\sqrt{3}} \left\{ M_{(i,s,3)}^{1}, \, M_{(i,s,5)}^{1}, \, M_{(i,s,8)}^{1}, \, 0 \right\} \\
&+ \frac{\left(\Delta v^{(s)}\right)^2}{12\sqrt{5}} \left\{ M_{(i,s,6)}^{0}, \, M_{(i,s,9)}^{0}, \, 0, \, 0 \right\}.
\end{split}
\end{align}

\subsection{Semi-Lagrangian discontinuous Galerkin (SLDG) method}
\label{subsec:sldg_vlasov}
In \S\ref{subsec:splitting} we described how to formulate and exactly solve the two sub-problems, referred to as Problems ${\mathcal A}$ and ${\mathcal B}$. The exact solutions presented in \S\ref{subsec:splitting} assume that we have access to the full solution at the previous time-step (i.e., $f_s\left(t^n,x,v\right)$ for any $(x,v)\in\reals \times \reals$). However, as described in \S\ref{subsec:discretization}, all we have access to is the discontinuous Galerkin representation of the solution as expressed by \eqref{eqn:q_ansatz}. In this subsection, we describe how to utilize the exact solutions presented in \S\ref{subsec:splitting} to produce unconditionally stable, high-order accurate, SLDG methods for both Problems ${\mathcal A}$ and ${\mathcal B}$. The approach described here is based on the method of Rossmanith and Seal \cite{RossmanithSeal2011}.

\subsubsection{SLDG for Problem ${\mathcal A}$}
\label{subsec:sldg_for_problem_A}
In this subsection, we are concerned with solving Problem $\mathcal{A}$ \eqref{eq:free-streaming} given an initial condition at time $t\!=\!t^n$ of the following form:
\begin{equation}
\label{eq:sldg_species_expansion}
F^{n}_{\left(i,j,s\right)}(\xi,\eta)
=\sum_{\ell=1}^{M_\text{basis}} Q^n_{\left(i, j, s, \ell\right)} \,\phi_{\ell}(\xi,\eta), 
\quad i=1,\ldots,N_x, \quad j=1,\ldots,N_v, \quad  s=1,\ldots,\text{S}.
\end{equation}
Our first goal is to rewrite Problem $\mathcal{A}$ \eqref{eq:free-streaming} as a collection of purely one-dimensional advection problems (i.e., instead of a one-dimensional advection problem embedded in two dimensions). To achieve this, we sample \eqref{eq:sldg_species_expansion} at $\morder$ quadrature points in $v$ (i.e., the {\it transverse} direction) using the Gauss-Legendre \cite{article:GolubWelsch1969} points and weights:
\begin{equation}
\label{eqn:sldg_slice_v}
\bigl\{ \eta_k, \, \omega_k \bigr\}_{k=1}^{\morder} \quad \Longrightarrow \quad
v_{\left(j,s,k\right)} := v^{(s)}_j + \eta_k \frac{\Delta v}{2},
\end{equation}
where $\morder = \mdeg + 1$. For the $\mdeg=3$ case, which is widely used in the current work, the relevant values are:
\begin{equation}
\eta_1, \eta_4 = \pm \sqrt{\frac{3}{7} + \frac{2}{7}\sqrt{\frac{6}{5}}}, \quad
\eta_2, \eta_3 = \pm \sqrt{\frac{3}{7} - \frac{2}{7}\sqrt{\frac{6}{5}}}, \quad
\omega_1,\omega_4 = \frac{18-\sqrt{30}}{36}, \quad
\omega_2,\omega_3 = \frac{18+\sqrt{30}}{36}.
\end{equation}
At each of the quadrature points in $v$, we consider a restricted one-dimensional version of Problem $\mathcal{A}$ \eqref{eq:free-streaming} along the line $v\!=\!v_{\left(j,s,k\right)}$:
\begin{equation}
\label{eq:sldg_free_line}
    f_{s,t}+v_{\left(j,s,k\right)} f_{s,x}=0.
\end{equation}

Next, we seek to advance the DG solution as given by \eqref{eq:sldg_species_expansion}, from $t\!=\!t^n$ to $t\!=\!t^n + \Delta t$, at the fixed velocity, $v\!=\!v_{\left(j,s,k\right)}$, by solving \eqref{eq:sldg_free_line} for each $k$ using the following {\it semi-Lagrangian} philosophy:
\begin{description}
\item[\bf Step 1 (shift):] Using the method of characteristics solution \eqref{eqn:char-method-A} as inspiration, shift the current solution, \eqref{eq:sldg_species_expansion}, at $v\!=\!v_{\left(j,s,k\right)}$ by a distance $v_{\left(j,s,k\right)} \Delta t$.
\item[\bf Step 2 (project):] However, in general, a direct shift by a distance $v_{\left(j,s,k\right)} \Delta t$ will result in a solution at $t\!=\!t^n + \Delta t$ that is no longer in the original DG solution space (i.e., the shifted solution will be a piecewise polynomial that is not aligned with the original mesh). Therefore, to return to the original mesh, we apply an $L^2$-projection.
\end{description}

To effectuate the above shift-then-project semi-Lagrangian philosophy over a time step $\Delta t$, we first decompose the ratio of the total distance a particle will travel, $v_{\left(j,s,k\right)} \Delta t $, to the mesh spacing, $\Delta x$, into integer and fractional parts:
\begin{equation}
\label{eq:sldg_x_shift}
    \text{Integer part:} \quad \mathcal{I}_{\left(j,s,k\right)}
    :=
    \left\lfloor \frac{v_{\left(j,s,k\right)} \Delta t}{\Delta x}\right\rfloor,
    \qquad
    \text{Fractional part:} \quad \mu_{\left(j,s,k\right)}
    :=
    \frac{v_{\left(j,s,k\right)}\Delta t}{\Delta x}-\mathcal{I}_{\left(j,s,k\right)},
\end{equation}
where $\lfloor \cdot \rfloor$ is the {\it floor} operation (i.e., the input is rounded down to the greatest integer that is less than or equal to the input) and $0\le \nu_{\left(j,s,k\right)}<1$. Here are two quick examples to illustrate the integer/fraction decomposition:
\begin{align}
  \text{Positive velocity:} & \quad \frac{v_{\left(j,s,k\right)} \Delta t}{\Delta x} = +7.215 \quad \Longrightarrow \quad
     \left\lfloor \frac{v_{\left(j,s,k\right)} \Delta t}{\Delta x}\right\rfloor = +7
     \quad \text{and} \quad 
     \mu_{\left(j,s,k\right)} = 0.215, \\
   \text{Negative velocity:} & \quad   \frac{v_{\left(j,s,k\right)} \Delta t}{\Delta x} = -7.215 \quad \Longrightarrow \quad
     \left\lfloor \frac{v_{\left(j,s,k\right)} \Delta t}{\Delta x}\right\rfloor = -8
     \quad \text{and} \quad 
     \mu_{\left(j,s,k\right)} = 0.785.
\end{align}
The key observations are: (1) the integer part of the shift is just a change of index, and (2) the fractional part of the shift is the part that requires $L^2$-projection back onto the original mesh. 

We define a tensor that can map the original two-dimensional basis coefficients in \eqref{eq:sldg_species_expansion} to one-dimensional coefficients for basis \eqref{eqn:legendre_basis_1d}
at each of the quadrature points, $\eta\!=\!\eta_k$:
\begin{equation}
A^{\text{2D}\rightarrow\text{1D}}_{\left(m, \, k, \, \ell\right)} = \frac{1}{2} \int_{-1}^{1}
  \phi^{\text{1D}}_{m}\!\left(\xi\right) \, \phi_{\ell}\!\left(\xi,\eta_k\right) \, d\xi, \quad
  m=1,\ldots,\morder, \quad k=1,\ldots,\morder, \quad \ell = 1, \ldots, \mbasis.
\end{equation}
The resulting one-dimensional coefficients are then given by
\begin{equation}
\label{eqn:projected-ic-x}
  Q^{\text{1D}}_{(i,j,s,m,k)} = \sum_{\ell=1}^{\mbasis} A^{\text{2D}\rightarrow\text{1D}}_{\left(m, \, k, \, \ell\right)} \, 
   Q^n_{\left(i, j, s, \ell\right)},
\end{equation}
where $m$ is the basis index and $k$ is the $\eta_k$ quadrature point index.

Having constructed several purely one-dimensional problems, we use the shift-then-project philosophy described above to create an approximate solution to \eqref{eq:sldg_free_line} with time step $\Delta t$ and initial data \eqref{eqn:projected-ic-x}: 
\begin{equation}
\label{eq:sldg_line_projection}
\begin{split}
Q^{\text{1D,\text{new}}}_{\left(i, \, j, \, s, \, m, \, k \right)}
&=
\frac{1}{2}
\sum_{\ell=1}^{\morder}
Q_{\bigl(i-1-\mathcal{I}_{\left(j,s,k\right)}, \, j, \, s, \, \ell, \, k\bigr)}^{\text{1D}}
\int_{-1}^{-1+2\mu_{\left(j, s, k\right)}}
\phi^{\text{1D}}_{\ell}\bigl(\xi+2-2\mu_{\left(j, s, k\right)}\bigr) \,
\phi^{\text{1D}}_{m}\bigl(\xi\bigr)\,d\xi
\\
&+
\frac{1}{2}
\sum_{\ell=1}^{\morder}
Q_{\bigl(i-\mathcal{I}_{\left(j,s,k\right)}, \, j, \, s, \, \ell, \, k\bigr)}^{\text{1D}}
\int_{-1+2\mu_{\left(j,s,k\right)}}^{1}
\phi^{\text{1D}}_{\ell}\bigl(\xi-2\mu_{\left(j,s,k\right)} \bigr) \, 
\phi^{\text{1D}}_{m}\bigl(\xi \bigr)\,d\xi.
\end{split}
\end{equation}
Seen in this formula is the index shift due to the integer part of $v_{\left(j,s,k\right)} \Delta t\bigl/\Delta x$, as well as the projection integrals due to the fractional part (see the definitions in \eqref{eq:sldg_x_shift}).

Finally, in order to reconstruct the two-dimensional Legendre coefficients, we define the following tensor that maps from 1D to 2D Legendre coefficients:
\begin{equation} 
\label{eqn:Q1D_to_Qnew_x}
A^{\text{1D}\rightarrow\text{2D}}_{\left(\ell, \, m, \, k\right)} = \frac{\omega_{k}}{4}  \int_{-1}^{1}
  \phi^{\text{1D}}_{m}\!\left(\xi\right) \, \phi_{\ell}\!\left(\xi,\eta_k\right) \, d\xi, \quad \ell = 1, \ldots, \mbasis, \quad
  m=1,\ldots,\morder, \quad k=1,\ldots,\morder.
\end{equation}
The application of this mapping produces the new Legendre coefficients at time $t\!=\!t^{n+1}\!=\!t^n+\Delta t$ that give an approximate solution to Problem $\mathcal{A}$ \eqref{eq:free-streaming}:
\begin{equation}
\label{eq:sldg_x_update}
Q_{\left(i, \, j, \, s, \, \ell \right)}^{n+1}
    =
    \sum_{m=1}^{\morder} \sum_{k=1}^{\morder}
    A^{\text{1D}\rightarrow\text{2D}}_{\left(\ell, \, m, \, k \right)} \, Q^{\text{1D},\text{new}}_{\left(i, \, j, \, s, \, m, \, k \right)},
    \quad \text{where} \quad
    \ell=1,\ldots,\mbasis,
\end{equation}
where again $\morder\!=\!\mdeg + 1$, $\mbasis\!=\!(\mdeg+1)(\mdeg+2)/2$, and $\mdeg$ is the chosen maximum polynomial order for the discontinuous Galerkin finite element space \eqref{eqn:dg-fem-space}.
\subsubsection{SLDG for Problem ${\mathcal B}$}
\label{subsec:sldg_for_problem_B}
In this subsection, we are concerned with solving Problem $\mathcal{B}$ \eqref{eq:acceleration} given an initial condition at time $t\!=\!t^n$ of the form \eqref{eq:sldg_species_expansion}. 
As in the previous subsection, our first goal is to rewrite Problem $\mathcal{B}$ \eqref{eq:acceleration} as a collection of purely one-dimensional advection problems (i.e., instead of a one-dimensional advection problem embedded in two dimensions). To achieve this, we sample \eqref{eq:sldg_species_expansion} at $\morder$ quadrature points in $x$ (i.e., the {\it transverse} direction) using the Gauss-Legendre \cite{article:GolubWelsch1969} points and weights:
\begin{equation}
\label{eqn:sldg_slice_x}
\bigl\{ \xi_k, \, \omega_k \bigr\}_{k=1}^{\morder} \quad \Longrightarrow \quad
x_{\left(i,k\right)} := x_i + \xi_k \frac{\Delta x}{2}.
\end{equation}
At each of the quadrature points in $x$, we consider a restricted one-dimensional version of Problem $\mathcal{B}$ \eqref{eq:acceleration} along the line $x\!=\!x_{\left(i,s,k\right)}$:
\begin{equation}
\label{eq:sldg_acc_line}
    f_{s,t} + E_{\left(i,s,k\right)} f_{s,v}=0, \quad \text{where} \quad
      E_{\left(i,s,k\right)} := \frac{q_s}{m_s} E^{\star}\!\left(x_{\left(i,k\right)} \right)
\end{equation}
and $E^{\star}$ is given by \eqref{eq:E_star_HO}.

Next, we seek to advance the DG solution as given by \eqref{eq:sldg_species_expansion}, from $t\!=\!t^n$ to $t\!=\!t^n + \Delta t$, at the fixed position, $x\!=\!x_{\left(i,s,k\right)}$, by solving \eqref{eq:sldg_acc_line} for each $k$ using the following {\it semi-Lagrangian} philosophy:
\begin{description}
\item[\bf Step 1 (shift):] Using the method of characteristics solution \eqref{eqn:char-method-B} as inspiration, shift the current solution, \eqref{eq:sldg_species_expansion}, at $x\!=\!x_{\left(i,s,k\right)}$ by a distance $E_{\left(i,s,k\right)} \Delta t$.
\item[\bf Step 2 (project):] However, in general, a direct shift by a distance $E_{\left(i,s,k\right)} \Delta t$ will result in a solution at $t\!=\!t^n + \Delta t$ that is no longer in the original DG solution space (i.e., the shifted solution will be a piecewise polynomial that is not aligned with the original mesh). Therefore, to return to the original mesh, we apply an $L^2$-projection.
\end{description}

To effectuate the above shift-then-project semi-Lagrangian philosophy over a time step $\Delta t$, we first decompose the ratio of the total distance a particle will travel, $E_{\left(i,s,k\right)} \Delta t$, to the mesh spacing, $\Delta v$, into integer and fractional parts:
\begin{equation}
\label{eq:sldg_v_shift}
    \text{Integer part:} \quad \mathcal{J}_{\left(i,s,k\right)}
    :=
    \left\lfloor \frac{E_{\left(i,s,k\right)} \Delta t}{\Delta v}\right\rfloor,
    \qquad
    \text{Fractional part:} \quad \nu_{\left(i,s,k\right)}
    :=
    \frac{E_{\left(i,s,k\right)}\Delta t}{\Delta v}-\mathcal{J}_{\left(i,s,k\right)},
\end{equation}
where $0\le \nu_{\left(i,s,k\right)}<1$. The key observations are: (1) the integer part of the shift is just a change of index, and (2) the fractional part of the shift is the part that requires $L^2$-projection back onto the original mesh. 

We introduce a one-dimensional version of the Legendre basis:
\begin{equation}
\label{eqn:legendre_basis_1d_eta}
\mathbb{P}^{\text{1D}}_{\eta}\!\left(\mdeg\right) = \text{Span}\Biggl\{ \phi^{\text{1D}}_1\!\left(\eta\right), \,
\phi^{\text{1D}}_2\!\left(\eta\right), \, \ldots, \, \phi^{\text{1D}}_{\morder}\!\left(\eta\right) \Biggr\}, \quad
\text{where} \quad \morder = \mdeg + 1.
\end{equation}
We also define a tensor that can map the original two-dimensional basis coefficients in \eqref{eq:sldg_species_expansion} to one-dimensional coefficients for basis \eqref{eqn:legendre_basis_1d}
at each of the quadrature points, $\xi\!=\!\xi_k$:
\begin{equation}
B^{\text{2D}\rightarrow\text{1D}}_{\left(m, \, k, \, \ell\right)} = \frac{1}{2} \int_{-1}^{1}
  \phi^{\text{1D}}_{m}\!\left(\eta\right) \, \phi_{\ell}\!\left(\xi_k,\eta\right) \, d\eta, \quad
  m=1,\ldots,\morder, \quad k=1,\ldots,\morder, \quad \ell = 1, \ldots, \mbasis.
\end{equation}
The resulting one-dimensional coefficients are then given by
\begin{equation}
\label{eqn:projected-ic-v}
  Q^{\text{1D}}_{(i,j,s,m,k)} = \sum_{\ell=1}^{\mbasis} B^{\text{2D}\rightarrow\text{1D}}_{\left(m, \, k, \, \ell\right)} \, 
   Q^n_{\left(i, j, s, \ell\right)},
\end{equation}
where $m$ is the basis index and $k$ is the $\xi_k$ quadrature point index.

Having constructed several purely one-dimensional problems, we use the shift-then-project philosophy described above to create an approximate solution to \eqref{eq:sldg_acc_line} with time step $\Delta t$ and initial data \eqref{eqn:projected-ic-v}: 
\begin{equation}
\label{eq:sldg_line_projection_v}
\begin{split}
Q^{\text{1D},\text{new}}_{\left(i, \, j, \, s, \, m, \, k \right)}
&=
\frac{1}{2}
\sum_{\ell=1}^{\morder}
Q_{\bigl(i, \, j-1-\mathcal{J}_{\left(i,s,k\right)}, \, s, \, \ell, \, k\bigr)}^{\text{1D}}
\int_{-1}^{-1+2\nu_{\left(i, s, k\right)}}
\phi^{\text{1D}}_{\ell}\bigl(\eta+2-2\nu_{\left(i, s, k\right)}\bigr) \,
\phi^{\text{1D}}_{m}\bigl(\eta\bigr)\,d\eta
\\
&+
\frac{1}{2}
\sum_{\ell=1}^{\morder}
Q_{\bigl(i, \, j-\mathcal{J}_{\left(i,s,k\right)}, \, s, \, \ell, \, k\bigr)}^{\text{1D}}
\int_{-1+2\nu_{\left(i,s,k\right)}}^{1}
\phi^{\text{1D}}_{\ell}\bigl(\eta-2\nu_{\left(i,s,k\right)} \bigr) \, 
\phi^{\text{1D}}_{m}\bigl(\eta \bigr)\,d\eta.
\end{split}
\end{equation}
Seen in this formula is the index shift due to the integer part of $E_{\left(i,s,k\right)} \Delta t\bigl/\Delta v$, as well as the projection integrals due to the fractional part (see the definitions in \eqref{eq:sldg_v_shift}).

In order to reconstruct the two-dimensional Legendre coefficients, we define the following tensor that maps from 1D to 2D Legendre coefficients:
\begin{equation} 
B^{\text{1D}\rightarrow\text{2D}}_{\left(\ell, \, m, \, k\right)} = \frac{\omega_{k}}{4}  \int_{-1}^{1}
  \phi^{\text{1D}}_{m}\!\left(\eta\right) \, \phi_{\ell}\!\left(\xi_k,\eta\right) \, d\eta, \quad \ell = 1, \ldots, \mbasis, \quad
  m=1,\ldots,\morder, \quad k=1,\ldots,\morder.
\end{equation}
The application of this mapping produces the new Legendre coefficients at time $t\!=\!t^{n+1}\!=\!t^n+\Delta t$ that give an approximate solution to Problem $\mathcal{B}$ \eqref{eq:acceleration}:
\begin{equation}
\label{eq:sldg_v_update}
Q_{\left(i, \, j, \, s, \, \ell \right)}^{n+1}
    =
    \sum_{m=1}^{\morder} \sum_{k=1}^{\morder}
    B^{\text{1D}\rightarrow\text{2D}}_{\left(\ell, \, m, \, k \right)} \, Q^{\text{1D},\text{new}}_{\left(i, \, j, \, s, \, m, \, k \right)},
    \quad \text{where} \quad
    \ell=1,\ldots,\mbasis,
\end{equation}
where again $\morder\!=\!\mdeg + 1$, $\mbasis\!=\!(\mdeg+1)(\mdeg+2)/2$, and $\mdeg$ is the chosen maximum polynomial order for the discontinuous Galerkin finite element space \eqref{eqn:dg-fem-space}. 

\subsection{Positivity-preserving limiter}
\label{subsec:pp_limiter_multi}
Although the numerical updates provided in the previous section for sub-problems $\mathcal{A}$ and $\mathcal{B}$ are each high-order accurate in $x$ and $v$, they do not guarantee positivity of the discrete solution. In this subsection, we briefly review the positivity limiter of Rossmanith and Seal \cite{RossmanithSeal2011} and extend it to the multi-species case.

\subsubsection{Positivity in Problem $\mathcal{A}$}
Positivity-preservation in Problem $\mathcal{A}$ requires that positive-in-the-mean initial conditions, $Q^n$, produce positive-in-the-mean solutions, $Q^{n+1}$, after the full numerical update given by \eqref{eqn:projected-ic-x}, \eqref{eq:sldg_line_projection}, and \eqref{eq:sldg_x_update}:
\begin{equation}
\label{eqn:positivity-preserving}
Q^{n}_{\left(i,j,s,1\right)} \ge \varepsilon > 0 \quad \forall i,j,s \quad \Longrightarrow \quad
Q^{n+1}_{\left(i,j,s,1\right)} \ge \varepsilon > 0 \quad \forall i,j,s.
\end{equation}
We can uncover what happens in the numerical method by setting $m=1$ in \eqref{eq:sldg_line_projection} (i.e., the mean value along $\eta=\eta_k$), combining this expression with \eqref{eqn:projected-ic-x}, and
using definition \eqref{eq:sldg_species_expansion} to obtain:
\begin{equation}
\label{eqn:ave_line_integral_x_version_1}
\begin{split}
Q^{\text{1D}}_{\left(i, \, j, \, s, \, 1, \, k \right)} &=
\frac{1}{2}
\int_{-1}^{-1+2\mu_{\left(j, s, k\right)}}
F^{n}_{\bigl(i-1-\mathcal{I}_{\left(j,s,k\right)}, \, j, \, s\bigr)}\!\Bigl(\xi+2-2\mu_{\left(j, s, k\right)}, \, \eta_k\Bigr) \,d\xi
\\
&+
\frac{1}{2}
\int_{-1+2\mu_{\left(j,s,k\right)}}^{1}
F^{n}_{\bigl(i-\mathcal{I}_{\left(j,s,k\right)}, \, j, \, s\bigr)}\!\Bigl(\xi-2\mu_{\left(j,s,k\right)}, \, \eta_k\Bigr) \,d\xi.
\end{split}
\end{equation}
Via simple affine transformations for each integral:
\begin{align}
\text{First integral:} & \quad \xi = -1 + \mu_{\left(j,s,k\right)}\!\left(\alpha + 1 \right), \quad d\xi = \mu_{\left(j,s,k\right)} \, d\alpha, \\
\text{Second integral:} & \quad \xi = \mu_{\left(j,s,k\right)} + \alpha\!\left(1- \mu_{\left(j,s,k\right)}\right), \quad d\xi = \left(1-\mu_{\left(j,s,k\right)} \right) \, d\alpha,
\end{align}
we can rewrite \eqref{eqn:ave_line_integral_x_version_1} as 
\begin{equation}
\label{eqn:ave_update_x}
\begin{split}
Q^{\text{1D}}_{\left(i, \, j, \, s, \, 1, \, k \right)} &=
\frac{\mu_{\left(j,s,k\right)}}{2}
\int_{-1}^{1}
F^{n}_{\bigl(i-1-\mathcal{I}_{\left(j,s,k\right)}, \, j, \, s\bigr)}\!\Bigl(
1 + \mu_{\left(j,s,k\right)}\!\left(\alpha - 1 \right), \, \eta_k\Bigr) \,d\alpha
\\
&+
\frac{1-\mu_{\left(j,s,k\right)}}{2}
\int_{-1}^{1}
F^{n}_{\bigl(i-\mathcal{I}_{\left(j,s,k\right)}, \, j, \, s\bigr)}\!\Bigl(-\mu_{\left(j,s,k\right)} + \alpha\!\left(1- \mu_{\left(j,s,k\right)}\right), \, \eta_k\Bigr) \,d\alpha.
\end{split}
\end{equation}

The integrand in each integral of \eqref{eqn:ave_update_x} is a polynomial in $\alpha$ of degree strictly less than $\morder$. Therefore, each integral can be computed exactly via the minimum Gauss-Legendre quadrature points of $\lceil\morder/2\rceil$ points\footnote{$\lceil \cdot \rceil$ is the {\it ceiling} operation (i.e., the input is rounded up to the least integer that is greater than or equal to the input).} in $\alpha$ using the Gauss-Legendre \cite{article:GolubWelsch1969} points and weights:
\begin{equation}
\label{eqn:pos_points}
\bigl\{ \alpha_{\ell}, \, \varpi_{\ell} \bigr\}_{\ell=1}^{\lceil\morder/2\rceil},
\end{equation}
which, for example, in the $\morder=4$ case are given by
\begin{equation}
\morder = 4 \quad \Longrightarrow \quad \lceil\morder/2\rceil = 2, \quad
\alpha_1 = -1\bigl/{\sqrt{3}}, \quad
\alpha_2 =  1\bigl/{\sqrt{3}}, \quad
\varpi_1 = \varpi_2 = 1.
\end{equation}
With this quadrature, we can rewrite \eqref{eqn:ave_update_x} as follows:
\begin{equation}
\label{eqn:ave_update_gauss_x}
\begin{split}
Q^{\text{1D}}_{\left(i, \, j, \, s, \, 1, \, k \right)} &=
\frac{\mu_{\left(j,s,k\right)}}{2}
\sum_{\ell=1}^{\lceil\morder/2\rceil} \varpi_{\ell} \,
F^{n}_{\bigl(i-1-\mathcal{I}_{\left(j,s,k\right)}, \, j, \, s\bigr)}\!\Bigl(
1 + \mu_{\left(j,s,k\right)}\!\left(\alpha_{\ell} - 1 \right), \, \eta_k\Bigr)
\\
&+
\frac{1-\mu_{\left(j,s,k\right)}}{2}
\sum_{\ell=1}^{\lceil\morder/2\rceil} \varpi_{\ell} \,
F^{n}_{\bigl(i-\mathcal{I}_{\left(j,s,k\right)}, \, j, \, s\bigr)}\!\Bigl(-\mu_{\left(j,s,k\right)} + \alpha_{\ell}\!\left(1- \mu_{\left(j,s,k\right)}\right), \, \eta_k\Bigr),
\end{split}
\end{equation}
where we note that $0 < \varpi_{\ell} \le 2$ $\forall \ell$ and
$0 \le \mu_{\left(j,s,k\right)} < 1$ $\forall j,s,k$. This leads to the following lemma.

\begin{lemma}[Positivity in the mean for Problem $\mathcal{A}$]
\label{thm:positivity_mean}
The SLDG numerical update for Problem $\mathcal{A}$, as described via equations \eqref{eqn:projected-ic-x}, \eqref{eq:sldg_line_projection}, and \eqref{eq:sldg_x_update}, is positivity-preserving in the sense of
\eqref{eqn:positivity-preserving}, provided that the starting solution, \eqref{eq:sldg_species_expansion},
is positive,
\begin{equation*}
F^{n}_{\left(i,j,s\right)}(\xi,\eta)
=\sum_{\ell=1}^{M_\text{\normalfont basis}} Q^n_{\left(i, j, s, \ell\right)} \,\phi_{\ell}(\xi,\eta) \ge \varepsilon > 0,
\end{equation*}
at all of the following quadrature points:
\begin{align*}
\left(\xi, \, \eta \right) \in \Bigl\{ \Bigl(-1 + \mu_{\left(j,s,k\right)}\!\left(\alpha_{\ell} + 1 \right), \, \eta_k \Bigr) \Bigr\}^{\lceil\morder/2\rceil}_{\ell=1} \, \cup \,
\Bigl\{ \Bigl( \mu_{\left(j,s,k\right)} + \alpha_{\ell}\!\left(1- \mu_{\left(j,s,k\right)}\right), \, \eta_k\Bigr) \Bigr\}^{\lceil\morder/2\rceil}_{\ell=1},
\end{align*}
where $\alpha_{\ell}$ is given by \eqref{eqn:pos_points} and $\mu_{\left(j,s,k\right)}$ is given by \eqref{eq:sldg_x_shift}, for all $i\!=\!1,\ldots,N_x$, $j\!=\!1,\ldots,N_v$, $s\!=\!1,\ldots,\text{S}$, and $k\!=\!1,\ldots,\morder$.
\end{lemma}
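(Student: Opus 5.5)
The plan is to express the new cell average $Q^{n+1}_{(i,j,s,1)}$ as a nonnegative combination of the one-dimensional line averages $Q^{\text{1D,new}}_{(i,j,s,1,k)}$. Each of those line averages will in turn be shown, via the exact quadrature \eqref{eqn:ave_update_gauss_x}, to be a convex combination of point values of $F^n$ at the listed points. Because the weights are positive and sum to one, the lower bound $\varepsilon$ passes straight through. One caveat on notation: in \eqref{eqn:ave_update_gauss_x} the left-hand side is really the updated line average, i.e.\ the $m=1$ entry of \eqref{eq:sldg_line_projection}.

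\textbf{Step 1: reduce $Q^{n+1}_{(i,j,s,1)}$ to line averages.} Set $\ell=1$ in \eqref{eq:sldg_x_update}. Since $\phi_1\equiv 1$, we have $A^{\text{1D}\rightarrow\text{2D}}_{(1,m,k)} = \frac{\omega_k}{4}\int_{-1}^1\phi^{\text{1D}}_m\,d\xi = \frac{\omega_k}{2}\delta_{m1}$, by the 1D orthonormality and $\phi^{\text{1D}}_1\equiv 1$. Hence
\begin{equation*}
Q^{n+1}_{(i,j,s,1)}=\sum_{k=1}^{\morder}\frac{\omega_k}{2}\,Q^{\text{1D,new}}_{(i,j,s,1,k)}.
\end{equation*}
The Gauss--Legendre weights satisfy $\omega_k>0$ and $\sum_k\omega_k=2$, so this is a convex combination. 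It therefore suffices to show $Q^{\text{1D,new}}_{(i,j,s,1,k)}\ge\varepsilon$ for every $k$.

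\textbf{Step 2: express each line average through point values of $F^n$.} Set $m=1$ in \eqref{eq:sldg_line_projection} and insert \eqref{eqn:projected-ic-x}. The function $\sum_\ell Q^{\text{1D}}_{(\cdot,\ell,k)}\phi^{\text{1D}}_\ell(\xi)$ is the $L^2$ projection of $F^n(\cdot,\eta_k)$ onto $\mathbb{P}^{\text{1D}}_\xi(\mdeg)$. But $F^n(\xi,\eta_k)$ is already a polynomial of degree at most $\mdeg$ in $\xi$, so the projection returns it exactly; checking this identification is the one point that needs care. The affine changes of variables then give \eqref{eqn:ave_update_x}. Each integrand there has degree at most $\mdeg\le 2\lceil\morder/2\rceil-1$ in $\alpha$, so the $\lceil\morder/2\rceil$-point Gauss rule is exact and yields \eqref{eqn:ave_update_gauss_x}.

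\textbf{Step 3: conclude.} The coefficients in \eqref{eqn:ave_update_gauss_x} are $\frac{\mu}{2}\varpi_\ell$ and $\frac{1-\mu}{2}\varpi_\ell$. They are nonnegative because $0\le\mu<1$ and $\varpi_\ell>0$, and they sum to $\frac{\mu}{2}\cdot 2+\frac{1-\mu}{2}\cdot 2=1$. The evaluation points are the images $1+\mu(\alpha_\ell-1)$ and $-\mu+\alpha_\ell(1-\mu)$ in the source cells $i-1-\mathcal I$ and $i-\mathcal I$. The Gauss points are symmetric ($\alpha_\ell\mapsto-\alpha_\ell$), so these coincide with the sets in the lemma's hypothesis, up to reindexing and the sign convention for which side of the shift is measured. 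Where $\mu=0$ the first group carries zero weight and may be dropped. The hypothesis $F^n\ge\varepsilon$ at these points, holding for all $i$ and hence for the shifted indices, then gives $Q^{\text{1D,new}}\ge\varepsilon$ and so $Q^{n+1}_{(i,j,s,1)}\ge\varepsilon$.

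The main obstacle is the bookkeeping in Step 3. The points listed in the lemma must be matched exactly with the arguments appearing in \eqref{eqn:ave_update_gauss_x}, which requires tracking signs carefully through the two affine maps and using the symmetry of the Gauss nodes.
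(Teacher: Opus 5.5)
\textbf{Steps 1 and 2 are correct; Step 3 does not go through.} Steps 1 and 2 are exactly the paper's argument. The paper's proof is just $Q^{n+1}_{(i,j,s,1)}=\sum_k\frac{\omega_k}{2}Q^{\text{1D,new}}_{(i,j,s,1,k)}$ combined with \eqref{eqn:ave_update_gauss_x}. Both steps are correct, including your remark that the left side of \eqref{eqn:ave_update_gauss_x} should be the \emph{updated} line average. The gap is in Step 3, at the obstacle you flagged, and Gauss-node symmetry does not close it.

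The update samples $F^n$ at $\xi=1+\mu(\alpha_\ell-1)\in[1-2\mu,1]$ in cell $i-1-\mathcal{I}$, and at $\xi=-\mu+\alpha_\ell(1-\mu)\in[-1,1-2\mu]$ in cell $i-\mathcal{I}$. The lemma instead lists $-1+\mu(\alpha_\ell+1)\in[-1,-1+2\mu]$ and $\mu+\alpha_\ell(1-\mu)\in[-1+2\mu,1]$. These are the Gauss nodes of the two subintervals of the \emph{target} cell in \eqref{eqn:ave_line_integral_x_version_1}, before the shift is applied. Applying $\alpha_\ell\mapsto-\alpha_\ell$ turns the sampled sets into $\{1-\mu(\alpha_\ell+1)\}$ and $\{-\mu-\alpha_\ell(1-\mu)\}$. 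These are the mirror images $\xi\mapsto-\xi$ of the listed sets, not the sets themselves. The two agree only after replacing $\mu$ by $1-\mu$, which also swaps the two families. That is not a convention you are free to choose: $\mu$ is fixed by \eqref{eq:sldg_x_shift}, and the same $\mu$ enters \eqref{eq:sldg_line_projection}.

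\textbf{The lemma is false as printed.} The stated hypothesis does not control the values that enter \eqref{eqn:ave_update_gauss_x}. Take $\morder=2$ (so $\alpha_1=0$, $\varpi_1=2$) and $0<\mu<\frac{1}{2}$. The hypothesis then constrains $F^n$ only at $\xi=\mu-1$ and $\xi=\mu$, while the update uses $\xi=1-\mu$ with weight $\mu$. Consider a profile linear in $\xi$ with $F^n(\mu)=\varepsilon$ and $F^n(\mu-1)=M$, and take $F^n\equiv\varepsilon$ in the other source cell.
\begin{itemize}
\item Its cell average is $\varepsilon+\mu(M-\varepsilon)\ge\varepsilon$, so the premise of \eqref{eqn:positivity-preserving} holds.
\item However, $F^n(1-\mu)=\varepsilon-(1-2\mu)(M-\varepsilon)$.
\item The line average is then $\varepsilon-\mu(1-2\mu)(M-\varepsilon)$, which is below $\varepsilon$ and becomes negative for large $M$.
\item Choosing $\Delta t$ so that the $\mu_{(j,s,k)}$ coincide across $k$, the same construction makes $Q^{n+1}_{(i,j,s,1)}$ itself negative.
\end{itemize}
The paper's one-line proof (``we have already established \ldots\ see \eqref{eqn:ave_update_gauss_x}'') passes over the same mismatch.

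\textbf{Fix.} Assume $F^n\ge\varepsilon$ at the source-cell nodes $1+\mu(\alpha_\ell-1)$ and $-\mu+\alpha_\ell(1-\mu)$. Equivalently, use the listed points with $\mu$ replaced by $1-\mu$. With that hypothesis your three steps give a complete proof. The limiter's point set $\chi_{\mpos}$ for Problem $\mathcal{A}$ needs the same correction.
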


\begin{proof}
From the above discussion, we have already established that $Q^{\text{1D}}_{\left(i, \, j, \, s, \, 1, \, k \right)}$ is positive under the assumptions of the lemma (see \eqref{eqn:ave_update_gauss_x}). The final step is to apply definition \eqref{eqn:Q1D_to_Qnew_x} with $\ell\!=\!1$ to obtain:
\begin{equation*}
Q_{\left(i, \, j, \, s, \, 1 \right)}^{n+1}
    =
    \sum_{m=1}^{\morder} \sum_{k=1}^{\morder}
    A^{\text{1D}\rightarrow\text{2D}}_{\left(1, \, m, \, k \right)} \, Q^{\text{1D}}_{\left(i, \, j, \, s, \, m, \, k \right)} = 
    \sum_{k=1}^{\morder} \frac{\omega_k}{2} \, Q^{\text{1D}}_{\left(i, \, j, \, s, \, 1, \, k \right)} \ge \varepsilon > 0.
\end{equation*}
\end{proof}

\subsubsection{Positivity in Problem $\mathcal{B}$}
Positivity-preservation in Problem $\mathcal{B}$ is nearly identical to what was described above for Problem $\mathcal{A}$. We therefore just summarize the main finding in the lemma below.

\begin{lemma}[Positivity in the mean for Problem $\mathcal{B}$]
\label{thm:positivity_mean_v}
The SLDG numerical update for Problem $\mathcal{B}$, as described via equations \eqref{eqn:projected-ic-v}, \eqref{eq:sldg_line_projection_v}, and \eqref{eq:sldg_v_update}, is positivity-preserving in the sense of
\eqref{eqn:positivity-preserving}, provided that the starting solution, \eqref{eq:sldg_species_expansion},
is positive,
\begin{equation*}
F^{n}_{\left(i,j,s\right)}(\xi,\eta)
=\sum_{\ell=1}^{M_\text{\normalfont basis}} Q^n_{\left(i, j, s, \ell\right)} \,\phi_{\ell}(\xi,\eta) \ge \varepsilon > 0,
\end{equation*}
at all of the following quadrature points:
\begin{align*}
\left(\xi, \, \eta \right) \in \Bigl\{ \Bigl(\xi_k, \, -1 + \nu_{\left(i,s,k\right)}\!\left(\alpha_{\ell} + 1 \right) \Bigr) \Bigr\}^{\lceil\morder/2\rceil}_{\ell=1} \, \cup \,
\Bigl\{ \Bigl( \xi_k, \, \nu_{\left(i,s,k\right)} + \alpha_{\ell}\!\left(1- \nu_{\left(i,s,k\right)}\right)\Bigr) \Bigr\}^{\lceil\morder/2\rceil}_{\ell=1},
\end{align*}
where $\alpha_{\ell}$ is given by \eqref{eqn:pos_points} and $\nu_{\left(i,s,k\right)}$ is given by \eqref{eq:sldg_v_shift}, for all $i\!=\!1,\ldots,N_x$, $j\!=\!1,\ldots,N_v$, $s\!=\!1,\ldots,\text{S}$, and $k\!=\!1,\ldots,\morder$.
\end{lemma}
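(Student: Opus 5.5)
The plan is to mirror the argument for Lemma~\ref{thm:positivity_mean}, with the roles of $(x,\xi,\mu,\mathcal{I},A)$ and $(v,\eta,\nu,\mathcal{J},B)$ interchanged.

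The first step is to set $m=1$ in \eqref{eq:sldg_line_projection_v}. Since $\phi^{\text{1D}}_1\equiv 1$, combining this with \eqref{eqn:projected-ic-v} and the expansion \eqref{eq:sldg_species_expansion} lets us recognize $\sum_\ell Q^{\text{1D}}_{(i,j',s,\ell,k)}\phi^{\text{1D}}_\ell(\eta)$ as the $L^2$-projection of $F^n_{(i,j',s)}(\xi_k,\cdot)$ onto $\mathbb{P}^{\text{1D}}_\eta(\mdeg)$. Because $F^n_{(i,j',s)}(\xi_k,\cdot)$ is itself a polynomial of degree at most $\mdeg$ in $\eta$, this projection is exact. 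We therefore obtain the analogue of \eqref{eqn:ave_line_integral_x_version_1}:
\begin{equation*}
Q^{\text{1D,new}}_{(i,j,s,1,k)} = \tfrac12\int_{-1}^{-1+2\nu}F^n_{(i,j-1-\mathcal{J},s)}(\xi_k,\eta+2-2\nu)\,d\eta + \tfrac12\int_{-1+2\nu}^{1}F^n_{(i,j-\mathcal{J},s)}(\xi_k,\eta-2\nu)\,d\eta ,
\end{equation*}
where $\nu=\nu_{(i,s,k)}$ and $\mathcal{J}=\mathcal{J}_{(i,s,k)}$.

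Next we apply the same two affine maps in $\alpha$ as in the Problem $\mathcal{A}$ case. This rewrites the right-hand side as $\tfrac{\nu}{2}\int_{-1}^1(\cdot)\,d\alpha+\tfrac{1-\nu}{2}\int_{-1}^1(\cdot)\,d\alpha$, with the arguments in the upper cell of the form $1+\nu(\alpha-1)$ and in the lower cell of the form $-\nu+\alpha(1-\nu)$. Each integrand is a polynomial in $\alpha$ of degree at most $\mdeg$. Hence Gauss--Legendre quadrature with $\lceil\morder/2\rceil$ points, which is exact up to degree $2\lceil\morder/2\rceil-1\ge\mdeg$, evaluates it exactly.

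The resulting expression is a convex-type combination with nonnegative coefficients $\tfrac{\nu}{2}\varpi_\ell$ and $\tfrac{1-\nu}{2}\varpi_\ell$, whose total sum is $\tfrac{\nu}{2}\cdot2+\tfrac{1-\nu}{2}\cdot2=1$. These coefficients multiply point values of $F^n$ at exactly the quadrature points listed in the lemma. This is the one place requiring care: the listed points must match the evaluation points up to relabeling. The set $\{1+\nu(\alpha_\ell-1)\}$ equals $\{-1+\nu(\alpha_\ell+1)\}$ reflected by $\alpha\to-\alpha$, and since the Gauss points are symmetric with symmetric weights, the two sets coincide. The second set is written in the statement as $\nu+\alpha(1-\nu)$, matching the form used in Lemma~\ref{thm:positivity_mean}. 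I would treat the shift sign convention exactly as in that lemma and interpret the listed points in the corresponding cells $j-1-\mathcal{J}$ and $j-\mathcal{J}$. The hypothesis $F^n\ge\varepsilon$ at these points then gives $Q^{\text{1D,new}}_{(i,j,s,1,k)}\ge\varepsilon$.

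Finally, we take $\ell=1$ in \eqref{eq:sldg_v_update}. Since $\phi_1\equiv1$, we have $B^{\text{1D}\to\text{2D}}_{(1,m,k)}=\tfrac{\omega_k}{4}\int_{-1}^1\phi^{\text{1D}}_m\,d\eta=\tfrac{\omega_k}{2}\delta_{m1}$ by orthonormality. Therefore
\begin{equation*}
Q^{n+1}_{(i,j,s,1)}=\sum_k\tfrac{\omega_k}{2}Q^{\text{1D,new}}_{(i,j,s,1,k)}\ge\varepsilon ,
\end{equation*}
because $\omega_k>0$ and $\sum_k\omega_k=2$. This establishes \eqref{eqn:positivity-preserving}.
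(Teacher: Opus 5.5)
Your overall route is the one the paper intends: it only says the Problem $\mathcal{B}$ case is ``nearly identical'' to Problem $\mathcal{A}$. Your computation is correct up to the quadrature formula. $Q^{\text{1D,new}}_{(i,j,s,1,k)}$ is a combination, with nonnegative weights summing to one, of the values $F^n_{(i,j-1-\mathcal{J},s)}\bigl(\xi_k,\,1+\nu(\alpha_\ell-1)\bigr)$ and $F^n_{(i,j-\mathcal{J},s)}\bigl(\xi_k,\,-\nu+\alpha_\ell(1-\nu)\bigr)$. The final step via $B^{\text{1D}\rightarrow\text{2D}}_{(1,m,k)}=\tfrac{\omega_k}{2}\delta_{m1}$ is also fine. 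The gap is the step where you match these evaluation points to the ones in the statement. The map $\alpha\to-\alpha$ sends $1+\nu(\alpha_\ell-1)$ to $1-\nu(\alpha_\ell+1)=-\bigl(-1+\nu(\alpha_\ell+1)\bigr)$. So the two sets are mirror images under $\eta\to-\eta$, not equal: one lies in $[1-2\nu,1]$, the other in $[-1,-1+2\nu]$. Likewise $\{-\nu+\alpha_\ell(1-\nu)\}$ and $\{\nu+\alpha_\ell(1-\nu)\}$ are mirror images. The points listed in the statement are the target-cell values $\eta=-1+\nu(\alpha_\ell+1)$ and $\eta=\nu+\alpha_\ell(1-\nu)$ from your changes of variables. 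They are taken before the shifts $\eta\mapsto\eta+2-2\nu$ and $\eta\mapsto\eta-2\nu$ that produce the arguments of $F^n$. So the hypothesis does not control the quantities in your formula, and reinterpreting the points ``in the corresponding cells'' does not bridge this.

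This cannot be patched, because the statement read literally is false. Take $\morder=2$ (one Gauss point, $\alpha_1=0$, $\varpi_1=2$), $\nu_{(i,s,k)}=\tfrac14$ for every $k$ (e.g.\ $E^\star$ constant on the cell), and $\varepsilon=0.01$. Set $F^n_{(i,j-1-\mathcal{J},s)}=0.26-\eta$ and every other cell $\equiv0.01$. The hypothesis only tests $\eta=-\tfrac34$ and $\eta=\tfrac14$, where the first cell takes the values $1.01$ and $0.01$. Yet your (correct) formula gives $Q^{n+1}_{(i,j,s,1)}=\tfrac14\bigl(0.26-\tfrac34\bigr)+\tfrac34(0.01)=-0.115<0$. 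The paper's Problem $\mathcal{A}$ lemma has the same mismatch between \eqref{eqn:ave_update_gauss_x} and its list of points, so following that template inherits the error. What your argument actually proves is the lemma with positivity imposed at $\bigl(\xi_k,\,1+\nu_{(i,s,k)}(\alpha_\ell-1)\bigr)$ and $\bigl(\xi_k,\,-\nu_{(i,s,k)}+\alpha_\ell(1-\nu_{(i,s,k)})\bigr)$. This is exactly the stated point set with $\nu$ replaced by $1-\nu$. You should state that corrected hypothesis rather than assert an equality of point sets that does not hold.
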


\subsubsection{Enforcing positivity on a collection of points}
In the two previous subsections, we established the conditions needed to rigorously guarantee positivity during an SLDG sub-step (either Problem $\mathcal{A}$ or $\mathcal{B}$). The one missing ingredient is the actual positivity enforcement at the appropriate quadrature points. Our approach is to use the widely celebrated method of Zhang and Shu \cite{ZhangShu2010}, which we briefly describe in this subsection.

At the beginning of either SLDG sub-step (either Problem $\mathcal{A}$ or $\mathcal{B}$), the solution is given by \eqref{eq:sldg_species_expansion}. We introduce a {\it damping parameter}, $\theta$, that pre-multiplies the ``high-order'' terms in \eqref{eq:sldg_species_expansion}:
\begin{equation}
\label{eqn:damped_pdf}
F^{n}_{\left(i,j,s\right)}(\xi,\eta)
=Q^n_{\left(i, j, s, 1\right)} + \theta_{\left(i, j, s \right)} \, \sum_{\ell=2}^{M_\text{basis}} Q^n_{\left(i, j, s, \ell\right)} \,\phi_{\ell}(\xi,\eta), 
\end{equation}
where $0 \le \theta_{\left(i, j, s \right)} \le 1$ and we assume a positive element average: $Q^n_{\left(i, j, s, 1\right)} \ge \varepsilon > 0$. Consider a collection of $\mpos$ points in $(\xi,\eta) \in [-1,1] \times [-1,1]$ where we want to impose that $F^{n}_{\left(i,j,s\right)} \ge \varepsilon > 0$:
\begin{equation}
\label{eqn:pos_points_chi_def}
\chi_{\mpos} = \biggl\{ \Bigl(\xi_p, \, \eta_p \Bigr) \biggr\}_{p=1}^{\mpos} \quad \text{where} \quad
\Bigl(\xi_p, \, \eta_p \Bigr) \in \bigl[-1,1\bigr] \times \bigl[-1,1\bigr] \quad \forall p.
\end{equation}
For each $\left(i, j, s \right)$, the goal is to find the largest possible value of $\theta$ (between 0 and 1) that guarantees positivity at all points in $\chi_{\mpos}$. We note three important mathematical facts relevant here:
\begin{enumerate}
\item There exists some $\theta\in[0,1]$ for which \eqref{eqn:damped_pdf} is positive at all points in $\chi_{\mpos}$;
\item If \eqref{eqn:damped_pdf} is positive at all points in $\chi_{\mpos}$ for some $\theta^{\star}\in[0,1]$, then it will remain positive for any $0 \le \theta \le \theta^{\star}$;
\item The damping will not affect the cell average (i.e., the first Legendre coefficient), so it will not alter the mass conservation properties of the overall scheme.
\end{enumerate}

We start with positivity for Problem $\mathcal{A}$,  and define the following points where we need to enforce positivity:
\begin{equation}
\begin{gathered}
\chi_{\mpos} :=  \biggl\{ \Bigl(-1 + \mu_{\left(j,s,k\right)}\!\left(\alpha_{\ell} + 1 \right), \, \eta_k \Bigr) \quad \text{and} \quad
 \Bigl( \mu_{\left(j,s,k\right)} + \alpha_{\ell}\!\left(1- \mu_{\left(j,s,k\right)}\right), \, \eta_k\Bigr)\!: \\ \ell=1,\ldots,\lceil\morder/2\rceil \quad \text{and} \quad \, k=1,\ldots\morder \biggr\}, \quad
 \mpos = 2 \morder \cdot \lceil\morder/2\rceil,
 \end{gathered}
\end{equation}
where $\alpha_{\ell}$ is given by \eqref{eqn:pos_points} and $\mu_{\left(j,s,k\right)}$ is given by \eqref{eq:sldg_x_shift}. For each $(i,j,s)$, we find the minimum value of \eqref{eq:sldg_species_expansion} across all these positivity points:
\begin{equation}
    F^{\text{min}}_{(i,j,s)} := \min_{\left(\xi, \, \eta\right) \in \chi_{\mpos}}
    F^n_{(i,j,s)}\!\left(\xi,\eta\right).
\end{equation}
From this minimum value, we define the damping parameter:
\begin{equation}
\label{eqn:theta_val}
\theta_{(i,j,s)} = \min\left\{ 1, \, \frac{Q^n_{\left(i, j, s, 1\right)}}{Q^n_{\left(i, j, s, 1\right)} - F^{\text{min}}_{(i,j,s)}} \right\},
\end{equation}
and use it to damp the high-order corrections via \eqref{eqn:damped_pdf}.

A similar approach is applied in Problem $\mathcal{B}$, where we again define the set of points where we need to enforce positivity:
\begin{equation}
\begin{gathered}
\chi_{\mpos} := \biggl\{ \Bigl(\xi_k, \, -1 + \nu_{\left(i,s,k\right)}\!\left(\alpha_{\ell} + 1 \right) \Bigr) \quad \text{and} \quad
 \Bigl( \xi_k, \, \mu_{\left(i,s,k\right)} + \alpha_{\ell}\!\left(1- \nu_{\left(i,s,k\right)}\right) \Bigr)\!: \\ \ell=1,\ldots,\lceil\morder/2\rceil \quad \text{and} \quad \, k=1,\ldots\morder \biggr\},
  \quad \mpos = 2 \morder \cdot \lceil\morder/2\rceil,
 \end{gathered}
\end{equation}
where $\alpha_{\ell}$ is given by \eqref{eqn:pos_points} and $\nu_{\left(i,s,k\right)}$ is given by \eqref{eq:sldg_v_shift}. For each $(i,j,s)$, we find the minimum value of \eqref{eq:sldg_species_expansion} across all these positivity points:
\begin{equation}
    F^{\text{min}}_{(i,j,s)} := \min_{\left(\xi, \, \eta\right) \in \chi_{\mpos}}
    F^n_{(i,j,s)}\!\left(\xi,\eta\right).
\end{equation}
Using this minimum value, $\theta$ is again computed via \eqref{eqn:theta_val}, and then used to damp the high-order corrections via \eqref{eqn:damped_pdf}.

\subsubsection{Positive kinetic energy}
All of the discussion in \S\ref{subsec:pp_limiter_multi} has so far focused on guaranteeing positivity of the element average of the particle density function (PDF). However, at the end of each time step, we want to guarantee that the distribution function produces a non-negative kinetic energy. To achieve this, we define another set of points where we will enforce positivity:
\begin{equation}
\chi_{\mpos} = \biggl\{ \Bigl(\xi_{a}, \, \xi_{b} \Bigr) \biggr\}_{a,b=1}^{\morder}, \quad
\mpos = M_{\text{o}}^2,
\end{equation}
where $\xi_{a}$ are the one-dimensional $\morder$-point Gauss-Legendre points. For each $(i,j,s)$, we find the minimum value of \eqref{eq:sldg_species_expansion} across all these positivity points:
\begin{equation}
    F^{\text{min}}_{(i,j,s)} = \min_{\left(\xi, \, \eta\right) \in \chi_{\mpos}}
    F^n_{(i,j,s)}\!\left(\xi,\eta\right).
\end{equation}
Using this minimum value, $\theta$ is again computed via \eqref{eqn:theta_val}, and then used to damp the high-order corrections via \eqref{eqn:damped_pdf}.
Kinetic energy is then guaranteed to be positive, since the integrals in \eqref{eqn:dg-moments-expansion} and \eqref{eqn:dg-moments-expansion-integrals} can all be computed exactly with quadrature based on the points in $\chi_{\mpos}$. 

\begin{algorithm}[!t]
\caption{Second-order operator splitting algorithm for the Vlasov-Amp\`ere system}
\label{alg:va-strang-ho}
    \begin{algorithmic}[1]
   \State $\frac{\Delta t}{2}$ step on Problem $\mathcal{A}$: \quad
$f_{s,t} + v f_{s,x} = 0 \quad  \forall s$: \quad
$f^{n}_{s} \rightarrow f_{s}^{(1)}$
 \State From $f_{s}^{(1)}$ compute moments $J^{(1)}$ and $\omega^{(1)}$, then update electric field:

\vspace{-9mm}

\begin{equation*}
 E^{\,n+1}
= E^n \cos\Bigl(\Delta t \, \omega^{(1)} \Bigr)
-\frac{J^{(1)}}{\omega^{(1)}}\sin\Bigl(\Delta t \, \omega^{(1)}  \Bigr), \quad
    E^{\star} 
    = E^n \left( \frac{\sin\bigl(\Delta t \, \omega^{(1)} \bigr)}{\Delta t \, \omega^{(1)}} \right)
    - \frac{J^{(1)}}{\omega^{(1)}} \left(\frac{1-\cos\bigl(\Delta t \, \omega^{(1)}\bigr)}{\Delta t \, \omega^{(1)}}\right)
\end{equation*}

\vspace{-4mm}

\State \(\Delta t\) step on Problem $\mathcal{B}$: \quad
$f_{s,t} + \frac{q_s}{m_s}  E^{\star} f_{s,v} = 0 \quad  \forall s$: \quad
$f_{s}^{(1)} \rightarrow f_{s}^{(2)}$
\State $\frac{\Delta t}{2}$ step on Problem $\mathcal{A}$: \quad
$f_{s,t} + v f_{s,x} = 0 \quad  \forall s$: \quad
$f^{(2)}_{s} \rightarrow f_{s}^{n+1}$
 \end{algorithmic}
\end{algorithm}

\subsection{Full operator split SLDG method for Vlasov-Amp\`ere}
\label{subsec:strang_algorithm}
In the previous subsections, we have treated the SLDG solutions of Problems $\mathcal{A}$ and $\mathcal{B}$ as independent problems. To achieve a full algorithm for solving the Vlasov-Amp\`ere, we need to take the independent Problem $\mathcal{A}$ and $\mathcal{B}$ solvers and place them inside an operator splitting time-stepping method. We briefly explain how to do this in this subsection.

\subsubsection{Second-order time splitting}
The classical operator splitting method for Vlasov, as pioneered by Cheng and Knorr \cite{ChengKnorr1976}, is the second-order Strang splitting method \cite{Strang1968}. We summarize a single time step of our proposed method in Algorithm~\ref{alg:va-strang-ho}. This approach combines all of the previously described algorithmic elements, including the harmonic-oscillator field update, which exactly evolves the electric field on the acceleration branch and supplies the time-averaged electric field for the velocity translation.

\begin{table}[!h]
\begin{tabular}{|c|r||c|r|}
        \hline
        $\zeta_{1}$  &  {\tt  0.0829844064174052}   & $\zeta_{2}$  &  {\tt  0.2452989571842710} \\ \hline
        $\zeta_{3}$  &  {\tt  0.3963098014983680}   & $\zeta_{4}$  &  {\tt  0.6048726657110800} \\ \hline
        $\zeta_{5}$  &  {\tt -0.0390563049223486}   & $\zeta_{6}$  &  {\tt -0.3501716228953510} \\ \hline
        $\zeta_{7}$  &  {\tt  0.1195241940131508}   & $\zeta_{8}$  &  {\tt -0.3501716228953510} \\ \hline
        $\zeta_{9}$  &  {\tt -0.0390563049223486}   & $\zeta_{10}$ &  {\tt  0.6048726657110800} \\ \hline
        $\zeta_{11}$ &  {\tt  0.3963098014983680}   & $\zeta_{12}$ &  {\tt  0.2452989571842710} \\ \hline
        $\zeta_{13}$ &  {\tt  0.0829844064174052}   & & \\
        \hline
\end{tabular}
\caption{Time-step coefficients for the fourth-order operator splitting approach of Blanes and Moan \cite{BlanesMoan2002}. \label{table:blanes-moan}}
\end{table}

\subsubsection{Fourth-order time splitting}
High-order accuracy in time is obtained via the optimized Runge–Kutta–Nystr\"om coefficients from Blanes and Moan \cite{BlanesMoan2002}, which were also used by Crouseilles, Faou, and Mehrenberger \cite{article:Crouseilles2011} and Güçlü, Christlieb, and Hitchon \cite{article:GucluChristliebHitchon2014} in their Vlasov-Poisson solvers. This approach differs from the approach of Rossmanith and Seal \cite{RossmanithSeal2011}, which used the fourth-order Yoshida splitting \cite{article:ForestRuth1990,article:Yoshida1990,article:Yoshida1993} (which is referred to as the ``triple jump'' method in \cite{article:Crouseilles2011}). The advantage of the fourth-order Runge–Kutta–Nystr\"om approach of Blanes and Moan \cite{BlanesMoan2002} over fourth-order Yoshida splitting \cite{article:Yoshida1990} is that all of the intermediate stages are strictly within the time interval $\left[t^n, t^n + \Delta t\right]$; this property was shown by Crouseilles et al. \cite{article:Crouseilles2011} to produce significantly improved total energy conservation in their Vlasov-Poisson solver, and that is why we have chosen it here.

We summarize a single time step of our proposed method in Algorithm~\ref{alg:split-4-va-ho}. We again highlight all the algorithmic elements, including the harmonic-oscillator field update, which exactly evolves the electric field on the acceleration branch and supplies the time-averaged electric field for the velocity translation. Note that the full time step, $\Delta t$, is divided into 13 stages -- 7 corresponding to SLDG solutions of Problem $\mathcal{A}$ and 6 corresponding to SLDG solutions of Problem $\mathcal{B}$. The step size for each stage is given by $\zeta_k \Delta t$, such that odd $k$ correspond to solutions of Problem $\mathcal{A}$ and even $k$ correspond to solutions of Problem $\mathcal{B}$. The time coefficients, $\zeta_k$, of the Runge–Kutta–Nystr\"om splitting are given in Table \ref{table:blanes-moan}; the left two columns are the time coefficients used in each of the 7 stages where Problem $\mathcal{A}$ is solved, while the right two columns are the time coefficients used in each of the 6 stages where Problem $\mathcal{B}$ is solved. 

\begin{algorithm}[!t]
\caption{Fourth-order operator splitting algorithm for the Vlasov-Amp\`ere system}
\label{alg:split-4-va-ho}
\begin{algorithmic}[1]
\State ({\bf Stage $1$}): \, $\zeta_{1}\Delta t$ step on Problem $\mathcal{A}$: \quad
$f_{s,t} + v f_{s,x} = 0 \quad  \forall s$: \quad
$f^{n}_{s} \rightarrow f^{(1)}_{s}$, \quad $E^{(1)}=E^n$
\For{$m=1,\ldots,6$}
\State From $f^{(2m-1)}_{s}$ compute moments $J$ and $\omega$, then update electric field: 

\vspace{-9mm}

\begin{equation*}
 E^{(m+1)}
= E^{(m)} \cos\Bigl(\zeta_{2m}  \Delta t \, \omega \Bigr)
-\frac{J}{\omega}\sin\Bigl(\zeta_{2m} \Delta t \, \omega  \Bigr), \quad
    E^{\star} 
    = E^{(m)} \left( \frac{\sin\bigl(\zeta_{2m} \Delta t \, \omega \bigr)}{\zeta_{2m} \Delta t \, \omega} \right)
    - \frac{J}{\omega} \left(\frac{1-\cos\bigl(\zeta_{2m}  \Delta t \, \omega\bigr)}{\zeta_{2m}  \Delta t \, \omega}\right)
\end{equation*}

\vspace{-4mm}

\State ({\bf Stage $2m$}): \, $\zeta_{2m} \Delta t$ step on Problem $\mathcal{B}$: \quad
$f_{s,t} + \frac{q_s}{m_s} E^{\star} f_{s,v} = 0 \quad  \forall s$: \quad
$f_{s}^{(2m-1)} \rightarrow f_{s}^{(2m)}$
\State ({\bf Stage $2m+1$}): \, $\zeta_{2m+1}\Delta t$ step on Problem $\mathcal{A}$: \quad
$f_{s,t} + v f_{s,x} = 0 \quad  \forall s:$ \quad
$f_{s}^{(2m)} \rightarrow f_{s}^{(2m+1)}$
\EndFor
\State Set \, $f^{n+1}_s \leftarrow f^{(13)}_s$ $\forall s$ \, and \, $E^{n+1} \leftarrow E^{(7)}$
\end{algorithmic}
\end{algorithm}

\section{Numerical results}\label{sec:results}

In this section, we validate the accuracy and structure-preserving properties of the proposed semi-Lagrangian DG scheme.  The numerical tests cover both the single and two-species 1D1V Vlasov-Amp\`ere system. For each case, we first use the method of manufactured solutions (MMS) to verify the expected order of accuracy. We then consider standard kinetic benchmark tests from the literature. Unless otherwise stated, periodic boundary conditions are imposed in \(x\), and the velocity domain is truncated to a finite interval chosen large enough that the distribution functions remain negligible near the velocity boundaries over the reported simulation time. For each simulation, time steps are chosen to maintain a constant Courant-Friedrichs-Lewy (CFL) number \cite{article:CFL1928}:
\begin{flalign}
 \mathrm{CFL}
:= \Delta t\;
\max
\left\{
\frac{\max\limits_{s\in\{e,i\}}\left(\max\left\{\left|v^{(s)}_{\text{min}}\right|, \left|v^{(s)}_{\text{max}}\right|\right\}\right)}{\Delta x},\;
\max\limits_{s\in\{e,i\}}\left|\frac{q_s}{m_s}\right| \cdot \frac{\max\limits_{x\in\Omega_x} \bigl|E(x)\bigr|}{\, \Delta v}
\right\}. 
\end{flalign}
   
The proposed numerical methods are implemented in the open-source DoGPack code \cite{dogpack}. The main code can be found by first cloning the Git repository and then setting the environment variable via the command:
\begin{tcolorbox}
\begin{verbatim}
make install
\end{verbatim}
\end{tcolorbox}
All of the examples presented in the upcoming subsections can be found in the following three subdirectories:
\begin{tcolorbox}
\begin{verbatim}
cd $DOGPACK/apps/2d/one_species_va_1d1v/
cd $DOGPACK/apps/2d/two_species_va_1d1v/
cd $DOGPACK/apps/2d/keen_wave/
\end{verbatim}
\end{tcolorbox}

\subsection{Single-species model}\label{subsec:single_species_limit}

We first consider the 1D1V single-species Vlasov-Amp\`ere system \eqref{eq:va_single_vlasov} and \eqref{eq:va_single_ampere}. In this setting, only the electron distribution is evolved, while the ions are modeled as a fixed, spatially uniform neutralizing background.

\subsubsection{Accuracy test: method of manufactured solution}
\label{subsec:mms_single}
We first verify accuracy in the single-species limit using a forced 1D1V Vlasov-Amp\`ere system, in a setup similar to Rossmanith and Seal \cite{RossmanithSeal2011} and Seal \cite{thesis:Seal2012}:
\begin{equation}
\label{eq:mms_vlasov_single}
    f_{,t}+v f_{,x}-E f_{,v}=\psi(t,x,v), \quad
    E_{,x}=\sqrt{\pi}-\rho, \quad
    E_{,t}-\rho u=S_1(t,x),
\end{equation}
where
\begin{equation}
\rho = \int_{-\infty}^{\infty} f \, dv \quad \text{and} \quad 
\rho u = \int_{-\infty}^{\infty} v f \, dv.
\end{equation}
The computation is performed over the domain:
\begin{equation}
(t,x,v)\in[0,1]\times[-\pi,\pi]\times[-\pi,\pi],
\end{equation}
with periodic boundary conditions in $x$ and extrapolation boundary conditions in $v$. The exact solution is taken as
\begin{equation}
\label{eq:mms_exact_f_single}
    f(t,x,v)
    =
    e^{-\frac{(4v-1)^2}{4}}
    \bigl(2-\cos(2x-2\pi t)\bigr), \quad 
    E(t,x)
    =
    \frac{\sqrt{\pi}}{4}\sin(2x-2\pi t).
\end{equation}
Substitution of \eqref{eq:mms_exact_f_single} into the forced Vlasov-Amp\`ere equation \eqref{eq:mms_vlasov_single} yields the following artificial source terms:
\begin{align}
\label{eq:mms_source_psi_single}
\psi(t,x,v)
&=
\frac12 \sin\left(2x-2\pi t\right) \,
e^{-\frac{\left(4v-1\right)^2}{4}}
\left[ 4(v-\pi) + \sqrt{\pi}\left(4v-1\right) \, \Bigl(2-\cos\bigl(2x-2\pi t\bigr)\Bigr)
\right], \\
\label{eq:mms_source_S1_single}
    S_1(t,x)
    &=
    \frac{\sqrt{\pi}}{8}
    \Bigl[\left(1-4\pi\right) \cos\left(2x-2\pi t\right) - 2 \Bigr].
\end{align}
We split the forced problem into the following two sub-problems:
\begin{align}
\label{eq:mms_problem_A_single}
\textnormal{Problem $\mathcal{A}$:}& \quad
    f_{,t}+v f_{,x}=\psi(t,x,v), \quad E_{,t}=0,
\\
\label{eq:mms_problem_B_single}
\textnormal{Problem $\mathcal{B}$:}& \quad
    f_{,t}-E f_{,v}=0, \quad E_{,t}- \rho u=S_1(t,x).
\end{align}
Problem \(\mathcal A\) can still be solved using the method of characteristics, but in this setting, the distribution is no longer constant along each characteristic; instead, we need to compute the following integral:
\begin{equation}
\label{eq:mms_SL_update_A_single}
    f\left(t^{n+1},x,v\right)
    =
    f\left(t^n,x-v\Delta t,v\right)
    +
    \int_{t^n}^{t^{n+1}}
    \psi\Bigl(\tau,x-v\left(t^{n+1}-\tau\right),v\Bigr)\,d\tau .
\end{equation}
For the source in \eqref{eq:mms_source_psi_single}, this integral is evaluated in closed form;
otherwise a consistent high-order quadrature may be used. Problem \(\mathcal B\) is advanced by
the same semi-Lagrangian velocity tracing used in the unforced scheme, but with the field equation
modified by the source \(S_1\). 

The error at some fixed time, $t$, is measured using the relative $L_2$-norm error:
\begin{equation}
\text{relative} \, L^2 \, \text{error} :=
\sqrt{\frac{\displaystyle\int_{v^{(s)}_{\text{min}}}^{v^{(s)}_{\text{max}}}\!\!\!\int_{x_{\text{min}}}^{x_{\text{max}}} \Bigl( f^{\left(\Delta x, \Delta v\right)}_s\left(t,x,v\right) - f_s\left(t,x,v\right) \Bigr)^2 \, d\!x \, d\!v}{\displaystyle\int_{v^{(s)}_{\text{min}}}^{v^{(s)}_{\text{max}}}\!\!\!\int_{x_{\text{min}}}^{x_{\text{max}}} \Bigl( f_s\left(t,x,v\right) \Bigr)^2 \, d\!x \, d\!v}},
\end{equation}
where $f^{\left(\Delta x, \Delta v\right)}_s$ is the numerical solution and $f_s$ is the exact solution.
In practice, we do not compute these integrals exactly; instead, we use the following approximation to the relative $L^2$ error:
\begin{equation}
\label{eqn:error}
\text{error}\!\left(s,\morder,N_x,N_v\right) := 
\sqrt{\frac{\displaystyle\sum_{i=1}^{N_x} \displaystyle\sum_{j=1}^{N_v} \left[\sum_{\ell=1}^{\morder(\morder+1)/2}\Bigl(Q_{\left(i,j,\ell,s\right)}-Q^{\text{exact}}_{\left(i,j,\ell,s\right)}\Bigr)^2+\sum_{\ell=\morder(\morder+1)/2+1}^{(\morder+1)(\morder+2)/2} \Bigl(Q^{\text{exact}}_{\left(i,j,\ell,s\right)}\Bigr)^2\right]}{\displaystyle\displaystyle\sum_{i=1}^{N_x} \displaystyle\sum_{j=1}^{N_v}\left[\sum_{\ell=1}^{(\morder+1)(\morder+2)/2}\Bigl(Q^{\text{exact}}_{\left(i,j,\ell,s\right)}\Bigr)^2\right]}},
\end{equation}
where $\morder$ is the theoretical order of accuracy of the numerical method (see \eqref{eqn:theoretical_order_accuracy}) and $Q^{\text{exact}}$ are the Legendre coefficients computed from projecting the exact PDF onto the finite element mesh. Note that the numerical solution has $\morder(\morder+1)/2$ Legendre coefficients, while the exact solution is represented with $(\morder+1)(\morder+2)/2$ Legendre coefficients that are computed via Gauss-Legendre quadrature with $\left(\morder+1\right)^2$ points. In other words, the exact solution is approximated on the finite element mesh with higher order accuracy than the numerical solution (e.g., see \cite{article:JohnsonRossmanithVaughan2023,article:RossmanithVaughan2026} where a similar approach is used). 

A convergence study for this problem is shown in Table~\ref{va:mms-single species}, which confirms that the scheme attains its expected order accuracy. In these simulations, the final time is $t\!=\!1$, and the CFL number is fixed at $\text{CFL}\!=\!2$. We note that the errors decrease at the correct rates for both the second-order splitting with the $\morder\!=\!2$ SLDG method (Columns 2 and 3) and for the fourth-order splitting with the $\morder\!=\!4$ SLDG method (Columns 4 and 5). The convergence results reported in this table were generated using the DoGPack code~\cite{dogpack}. The results presented here are consistent with those of Rossmanith and Seal \cite{RossmanithSeal2011} and Seal \cite{thesis:Seal2012}. Finally, we note that Table~\ref{va:mms-single species} can be generated from the code via the following commands:
\begin{tcolorbox}
\begin{verbatim}
cd $DOGPACK/apps/2d/one_species_va_1d1v/manufactured_soln_convergence_test; 
make errortable;
\end{verbatim}
\end{tcolorbox}

\begingroup
\setlength{\tabcolsep}{10pt} %
\renewcommand{\arraystretch}{1.5} %
\begin{table}[!th]
\begin{center}
\begin{tabular}{|c||c|c||c|c|}
    \hline
    \textbf{$N_{x} \times N_{v}$} & \textbf{error: $2^{\text{nd}}$ order split} & $\log_{2}(\text{ratio})$ &
                    \textbf{error: $4^{\text{th}}$ order split} & $\log_{2}(\text{ratio})$ \\ \hline\hline
    $10\times10$   & $1.5922\times10^{-1}$  & --      & $1.0252\times10^{-2}$ & --      \\ \hline
    $20\times20$   & $3.4915\times10^{-2}$  & $2.189$ & $6.9058\times10^{-4}$ & $3.892$ \\ \hline
    $40\times40$   & $8.4027\times10^{-3}$  & $2.055$ & $4.2036\times10^{-5}$ & $4.038$ \\ \hline
    $80\times80$   & $2.1312\times10^{-3}$  & $1.979$ & $2.4912\times10^{-6}$ & $4.077$ \\ \hline
    $160\times160$ & $5.4175\times10^{-4}$  & $1.976$ & $1.4329\times10^{-7}$ & $4.120$ \\ \hline
    $320\times320$ & $1.3740\times10^{-4}$  & $1.979$ & $8.6479\times10^{-9}$ & $4.050$ \\ \hline
\end{tabular}
\end{center}
\caption{(\S\ref{subsec:mms_single}: Single-Species Manufactured Solution) Manufactured solution convergence study for the single-species semi-Lagrangian DG method. Reported in this table are the relative $L^2$-errors at increasing mesh resolutions along with the log of the error ratios for both the second-order splitting with $\morder\!=\!2$ SLDG in phase space and fourth-order splitting with $\morder\!=\!4$ SLDG in phase space. Note that $\log_2$ of the error ratio gives an estimate for the order of convergence, since each row in the table represents a doubling of mesh resolution compared to the previous row. All runs are computed to a final time of $t=1$ with $\text{CFL}\!=\!2$ over the domain $(x,v)\in[-\pi,\pi]\times[-\pi,\pi]$ with periodic boundary conditions in $x$.}
\label{va:mms-single species}
\end{table}
\endgroup

\subsubsection{Two-stream instability}
\label{subsec:twostream}
Next we consider the classical two-stream instability in the single-species limit for the 1D1V
Vlasov-Amp\`ere system, which has been considered numerous times in the literature (e.g., see 
\cite{BanksHittinger2010,ChengKnorr1976,FilbetSonnendrucker2003,HeathGambaMorrisonMichler2012,qiu2010,QiuShu2011}). This benchmark tests the ability of the scheme to resolve nonlinear
phase space filamentation. The initial distribution and electric field are
\begin{equation}
\label{eq:initial_dist_two_stream}
    f(t\!=\!0,x,v)
    =
    \frac{v^2}{\sqrt{8\pi}}
    \biggl(2-\cos\left(\frac{x}{2}\right)\biggr)e^{-\frac{v^2}{2}},
    \quad E(t\!=\!0,x)=\sin\left(\frac{x}{2}\right), \quad
    \rho_0 = 1, \quad J_0 = 0.
\end{equation}
The computation is performed on $(x,v)\in[-2\pi,2\pi]\times[-2\pi,2\pi]$,
with periodic boundary conditions in $x$, the solution is advanced to $t\!=\!60$, 
$N_x\times N_v\!=\!128\times128$, and $\mathrm{CFL}\!=\!5$.

Figure~\ref{fig:two-stream-dist} shows the phase space distribution, $f\left(t\!=\!45,x,v\right)$, in Panel (a) and a slice of this solution, $f\left(t\!=\!45,x\!=\!0,v\right)$, in Panel (b). These results are obtained using fourth-order splitting with the $\morder\!=\!4$ SLDG method. The fourth-order scheme sharply resolves the fine-scale filamentation in the distribution function. Panel (b) illustrates how the positivity-preserving limiter eliminates the undershoots and produces a non-negative distribution. Panels (c) and (d) show the time history of $\|E(t,\cdot)\|_{2}$ and particle number drift over the simulation time, $t\in[0,60]$, respectively. These images can be generated using the DoGPack code~\cite{dogpack} by going to the code directory and typing:

\begin{tcolorbox}
\begin{verbatim}
cd $DOGPACK/apps/2d/one_species_va_1d1v/two_stream_instability/; make;
dog.exe; make plotfull; make plotefield; make plotcon;
\end{verbatim}
\end{tcolorbox}

\begin{figure}[!th]
\begin{center}
\begin{tabular}{cc}
   (a)\includegraphics[width=0.44\textwidth]{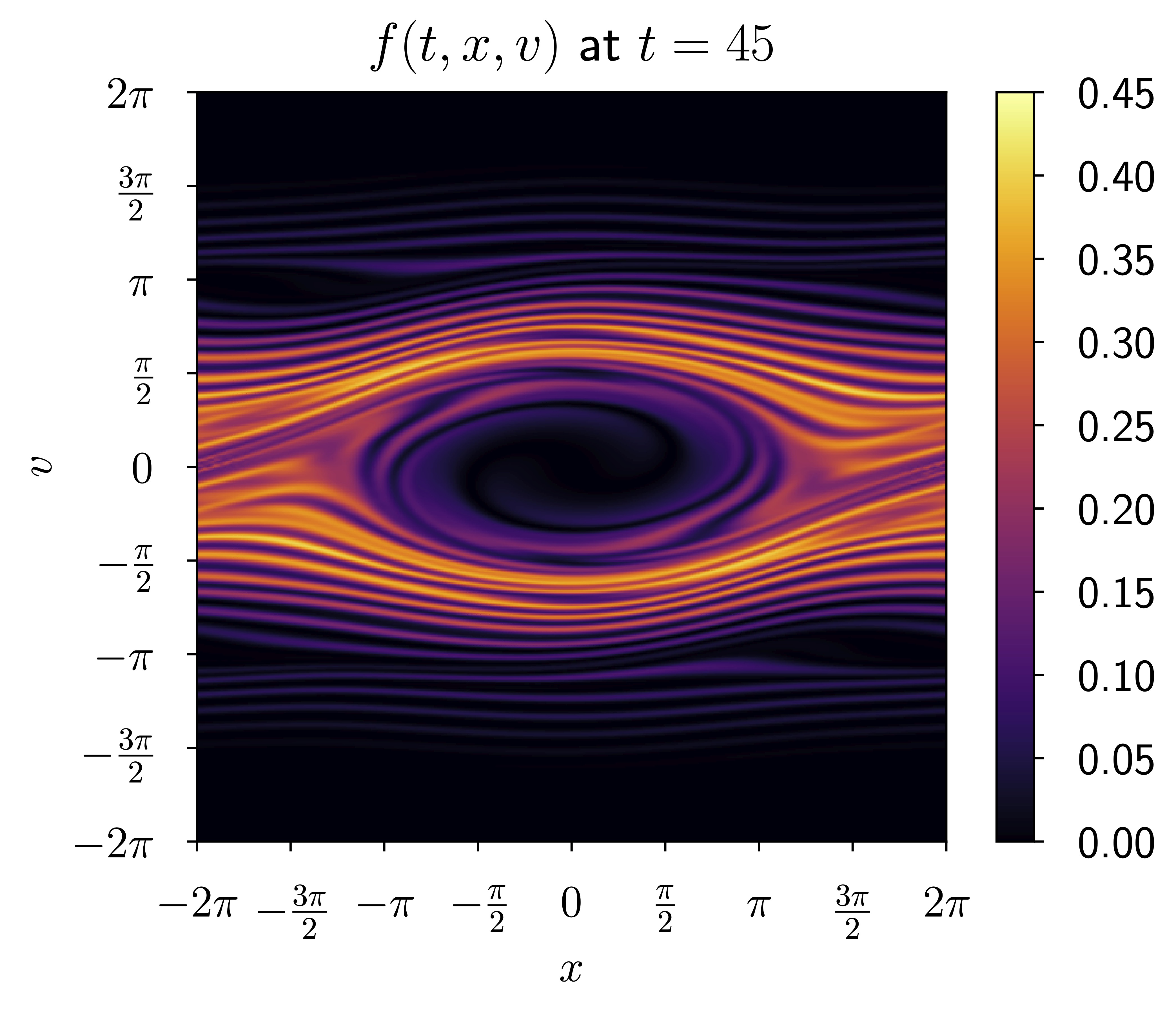} &
   (b)\includegraphics[width=0.44\textwidth]{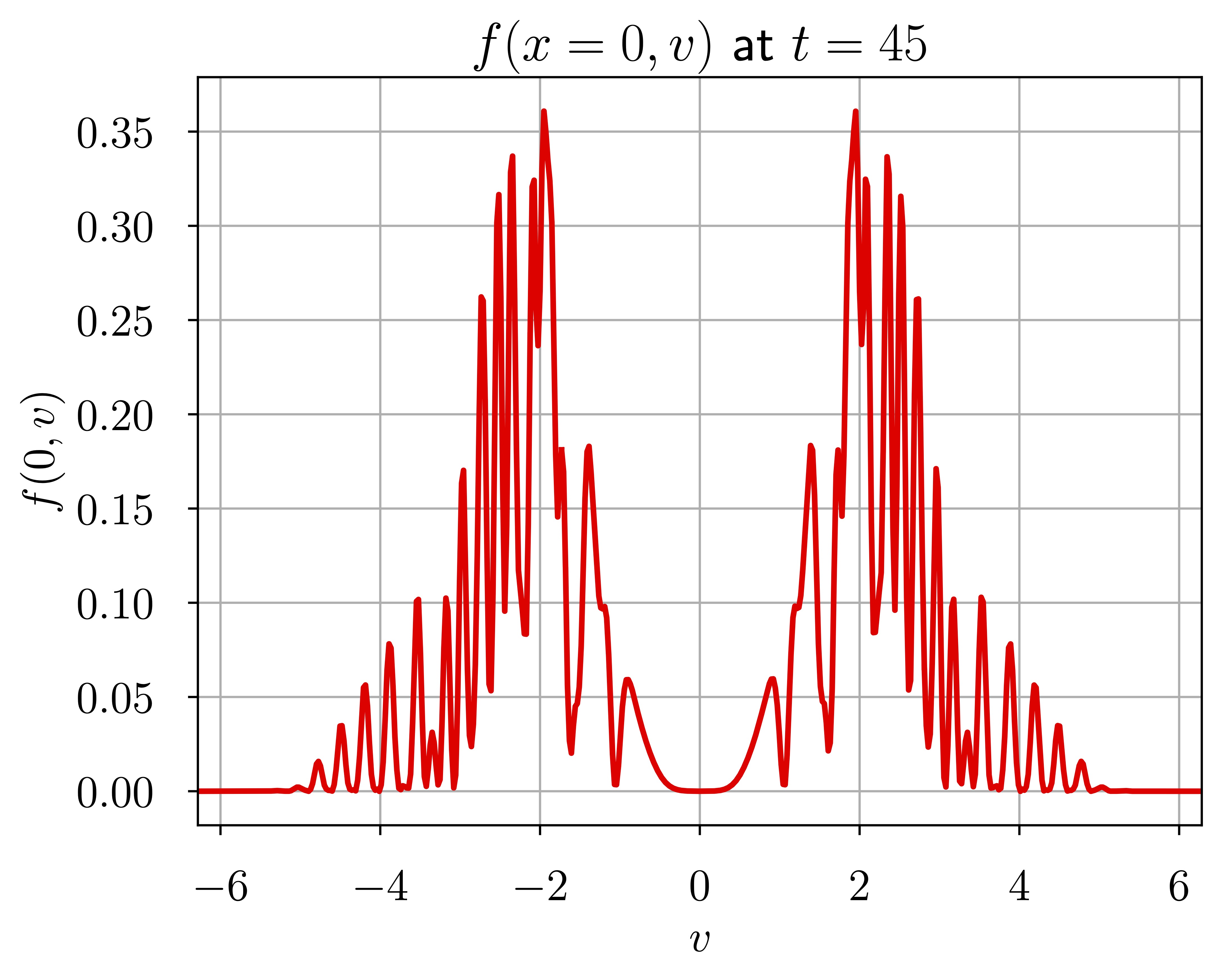} \\
   (c)\includegraphics[width=0.44\textwidth]{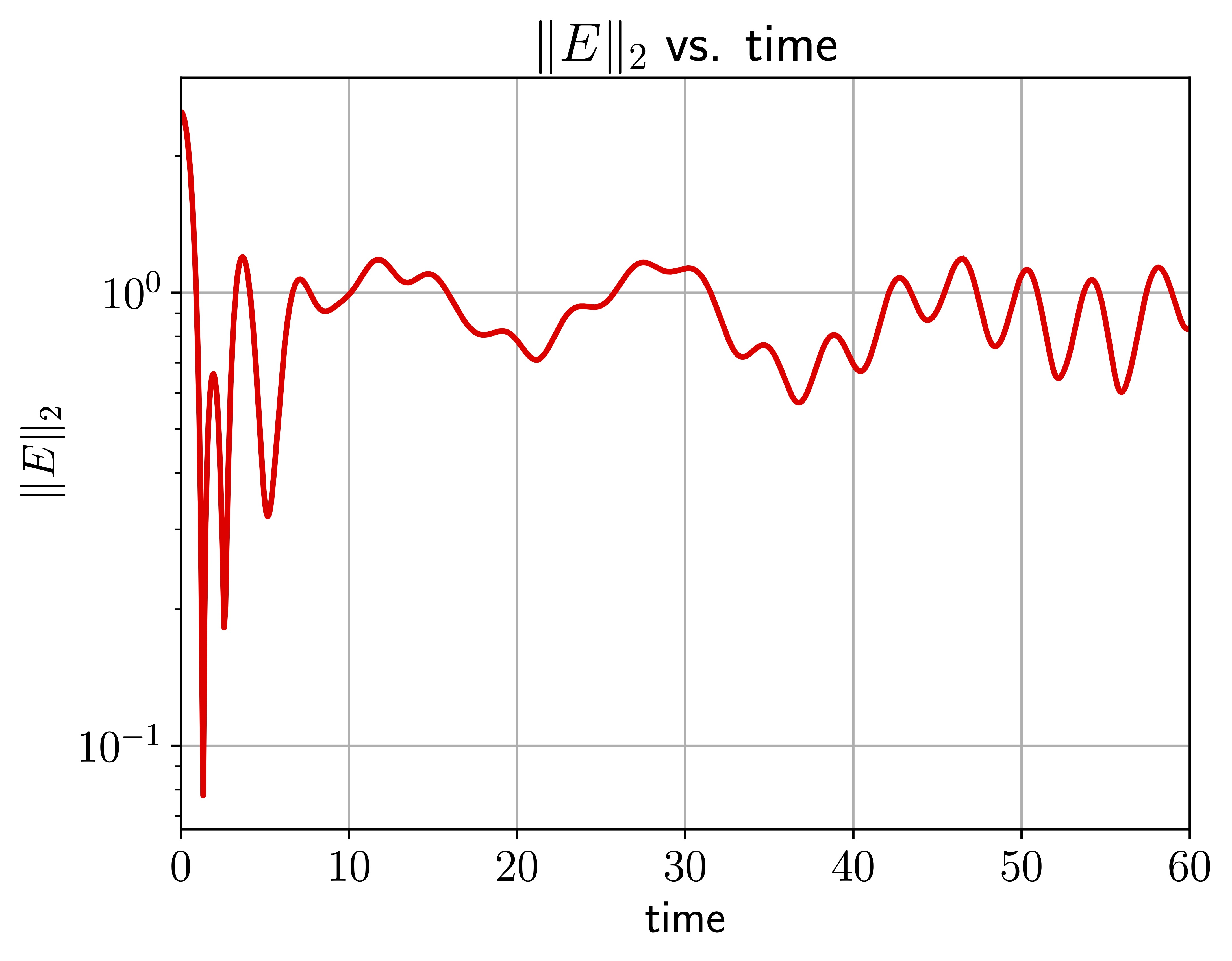} &
   (d)\includegraphics[width=0.44\textwidth]{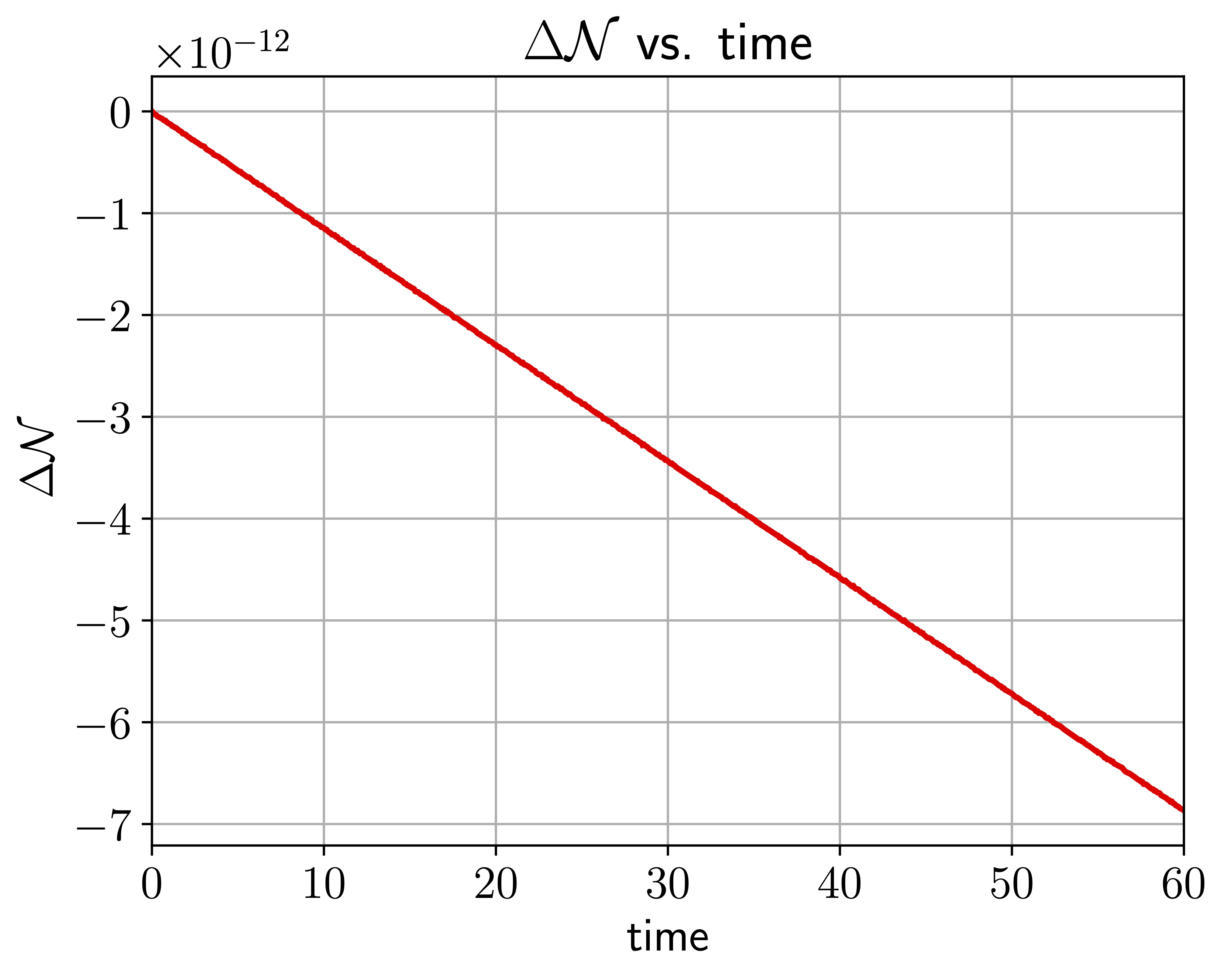}
   \end{tabular}
    \caption{(\S\ref{subsec:twostream}: Two-stream instability): Panel (a) shows the phase space plot of the distribution function, $f(t\!=\!45,x,v)$. Panel (b) shows the slice $f\left(t\!=\!45,x\!=\!0,v\right)$, which demonstrates that the positivity-preserving limiter maintains a non-negative distribution while retaining the resolved fine-scale structure.
    Panel (c) shows the time history of the $L^{2}$-norm of the electric field, $\|E(t,\cdot)\|_{2}$, over $t\in[0,60]$, while Panel (d) shows the particle number drift over the same time window. These results are computed using fourth-order operator-splitting with the $\morder\!=\!4$ SLDG method, $N_x\times N_v\!=\!128\times128$ elements, and $\mathrm{CFL}\!=\!5$.}
\label{fig:two-stream-dist}
\end{center}
\end{figure}

\subsubsection{Weak Landau damping}
\label{subsec:weak_landau}
Next we consider weak Landau damping, which has been extensively studied both numerically (e.g., Zhou, Guo, and Shu \cite{article:ZhouGuoShu2001} and Finn et al. \cite{article:Finn2023}) and analytically (e.g., Landau \cite{article:Landau1946} and Mouhot and Villani \cite{article:MouhotVillani2011}). In the weakly perturbed regime, the electric field is expected to exhibit an exponentially decaying envelope, making this test useful for assessing both accuracy and long-time conservation.

The initial distribution is a small perturbation of a Maxwellian
\begin{equation}
\label{eq:ld_ic}
f(t\!=\!0,x,v)=\frac{1}{\sqrt{2\pi}}\bigl(1+\alpha\cos(kx)\bigr)e^{-\frac{v^2}{2}} \quad \text{and} \quad
E(t\!=\!0,x)=-\frac{\alpha}{k}\sin(kx),
\end{equation}
with $\alpha\!=\!0.01$, $k\!=\!0.5$, $\rho_0\!=\!1$, and $J_0\!=\!0$. The computation is performed on $(x,v)\in[-2\pi,2\pi]\times[-2\pi,2\pi]$ with periodic boundary conditions in $x$, and is advanced to a final time of $t\!=\!60$. The grid resolution is $N_x \times N_v\!=\!128 \times 128$, and the CFL number is $\text{CFL}\!=\!5$. We test the fourth-order split scheme with the $\morder\!=\!4$ SLDG method.

As a diagnostic, we monitor the electric field strength as a function of time:
\begin{equation}
\label{eq:ld_E_L2}
    \|E(t,\cdot)\|_{2}
    :=
    \sqrt{
    \int_{-2\pi}^{2\pi} \bigl|E(t,x)\bigr|^2\,d\!x},
\end{equation}
which is shown in Panel (a) of Figure~\ref{fig:weak-landau-E}. The fourth-order scheme accurately captures the exponential decay that is predicted by linear theory \cite{article:Landau1946}. Panel (b) of Figure~\ref{fig:weak-landau-E} shows the particle number drift over the course of the simulation, $t\in[0,60]$.

All of the provided images can be recreated from the DoGPack \cite{dogpack} source code via the following commands:
\begin{tcolorbox}
\begin{verbatim}
cd $DOGPACK/apps/2d/one_species_va_1d1v/landau_damping_weak/; make;
dog.exe; make plotefield; make plotcon;
\end{verbatim}
\end{tcolorbox}

\begin{figure}[!th]
\begin{center}
\begin{tabular}{cc}
   (a)\includegraphics[width=0.44\textwidth]{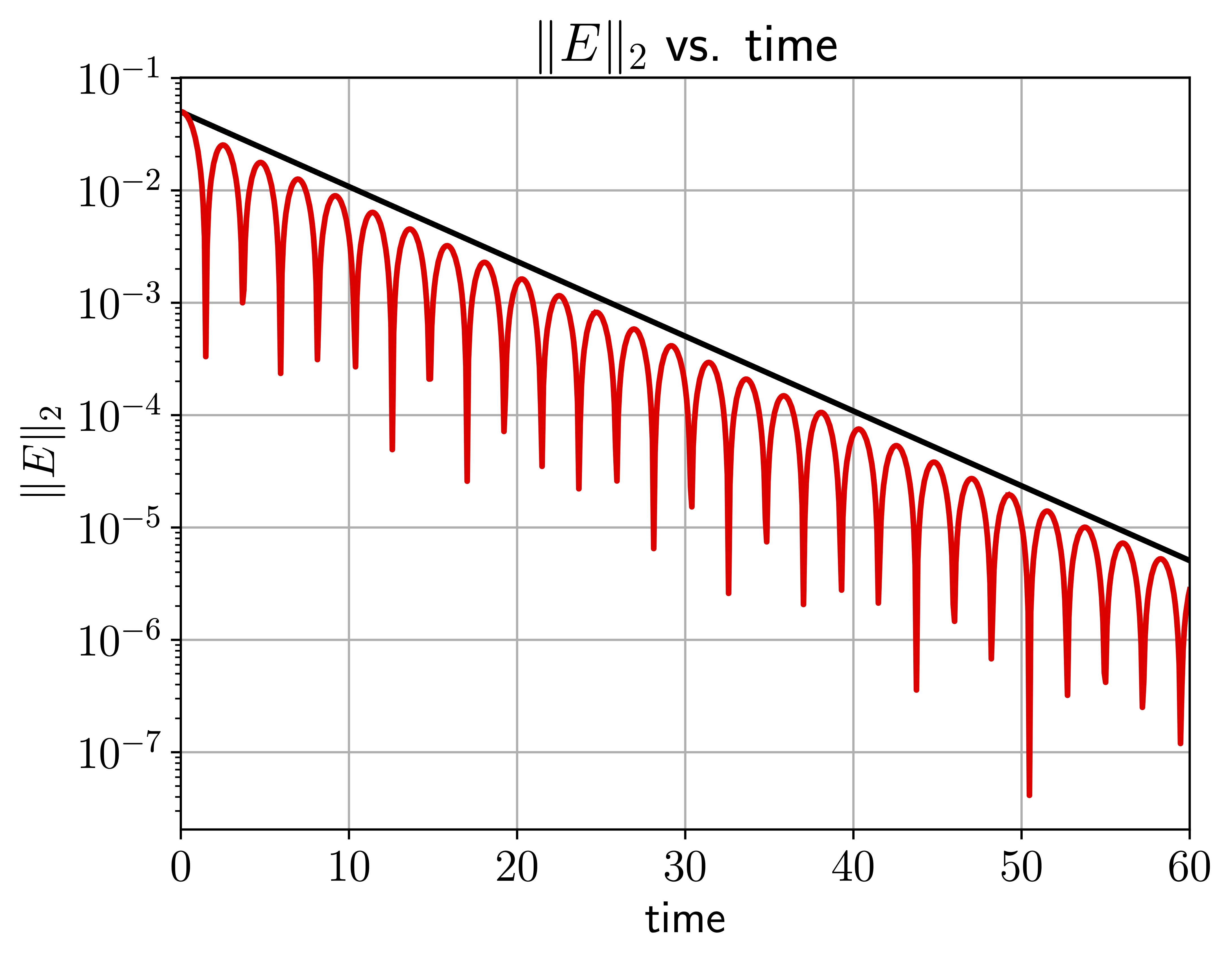} 
   (b)\includegraphics[width=0.44\textwidth]{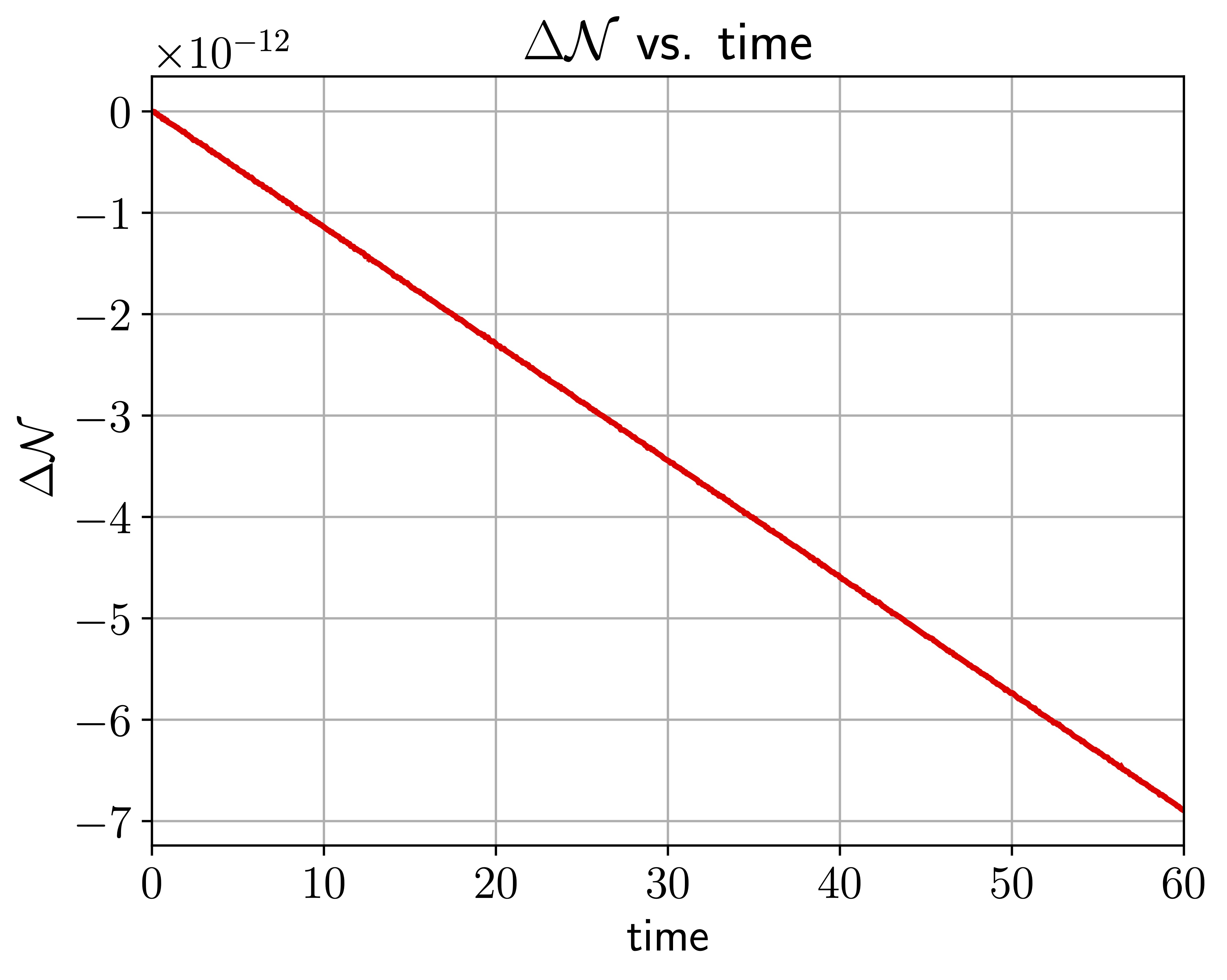}
\end{tabular}
    \caption{(\S\ref{subsec:weak_landau}: Weak Landau damping) Panel (a) shows time history of the $L^{2}$-norm of the electric field, $\|E(t,\cdot)\|_{2}$, over $t\in[0,60]$, while Panel (b) shows the conservation diagnostics for particle number. Panel (a) shows the expected linear decay rate of the electric field in the weakly perturbed regime. These results are obtained using fourth-order operator-splitting with the $\morder\!=\!4$ SLDG method, $N_x\times N_v\!=128\times128$ elements, and $\mathrm{CFL}\!=\!5$. }
\label{fig:weak-landau-E}
\end{center}
\end{figure}

\subsubsection{Strong Landau damping}
\label{subsec:strong_landau}
Strong Landau damping is the same as weak Landau damping, but with a larger perturbation in the initial condition \eqref{eq:ld_ic}: $\alpha\!=\!0.5$. Compared with the weakly perturbed case, the strong initial perturbation produces nonlinear phase mixing, particle trapping, and fine-scale filamentation in phase space. The computation is performed on $(x,v)\in\left[-2\pi,2\pi\right]\times\left[-2\pi,2\pi\right]$ with periodic boundary conditions in $x$, a mesh resolution of $N_x \times N_v\!=\!128 \times 128$, a CFL number of $\mathrm{CFL}\!=\!5$, and a final time of $t\!=\!60$. We test the fourth-order split scheme with the $\morder\!=\!4$ SLDG method.

Figure~\ref{fig:strong-landau-dist} shows the phase space distribution, $f\left(t\!=\!45,x,v\right)$, in Panel (a) and a slice of this solution, $f\left(t\!=\!45,x\!=\!0,v\right)$, in Panel (b). The solution develops the expected nonlinear structures, including trapped-particle vortices, pronounced filamentation, and clear resolution of the finer structures as depicted in Panel (a). As shown in Panel (b), applying the positivity-preserving limiter eliminates negative values and ensures the distribution remains non-negative. Panels (c) and (d) show the time history of $\|E(t,\cdot)\|_{2}$ and particle number drift over the simulation time, $t\in[0,60]$, respectively. The electric field strength exhibits the characteristic strong Landau damping behavior: (1) an initial decay that is followed by (2) nonlinear growth, and (3) long-time oscillations as shown in Panel (c). As depicted in Panel (d), the particle number drift remains very small throughout the simulation, demonstrating that the positivity-preserving limiter can be applied without compromising discrete conservation.

All of the provided images can be recreated from the DoGPack \cite{dogpack} source code via the following commands:
\begin{tcolorbox}
\begin{verbatim}
cd $DOGPACK/apps/2d/one_species_va_1d1v/landau_damping_strong/;
make; dog.exe; make plotfull; make plotefield; make plotcon;
\end{verbatim}
\end{tcolorbox}

\begin{figure}
\begin{center}
\begin{tabular}{cc}
   (a)\includegraphics[width=0.44\textwidth]{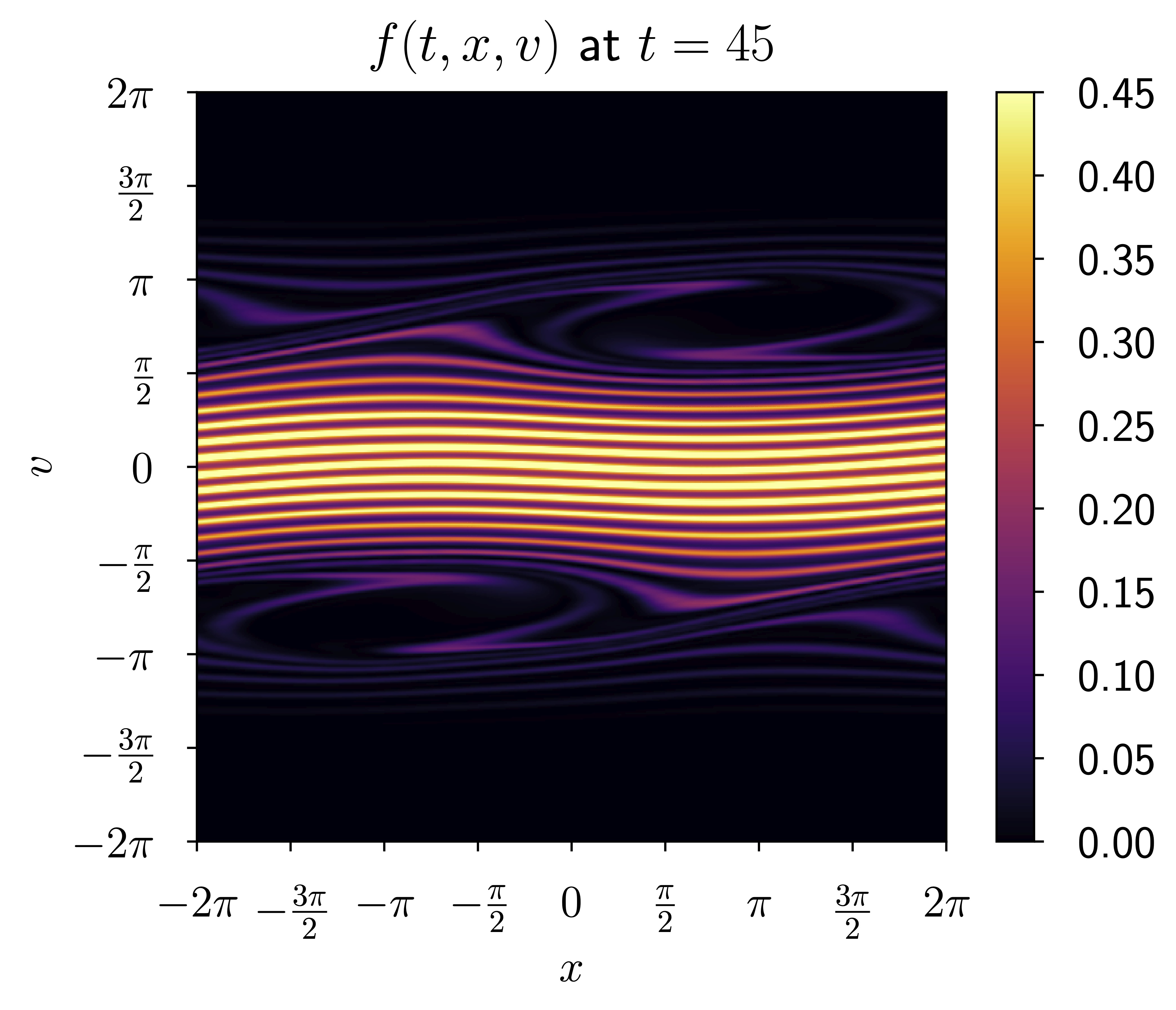} &
   (b)\includegraphics[width=0.44\textwidth]{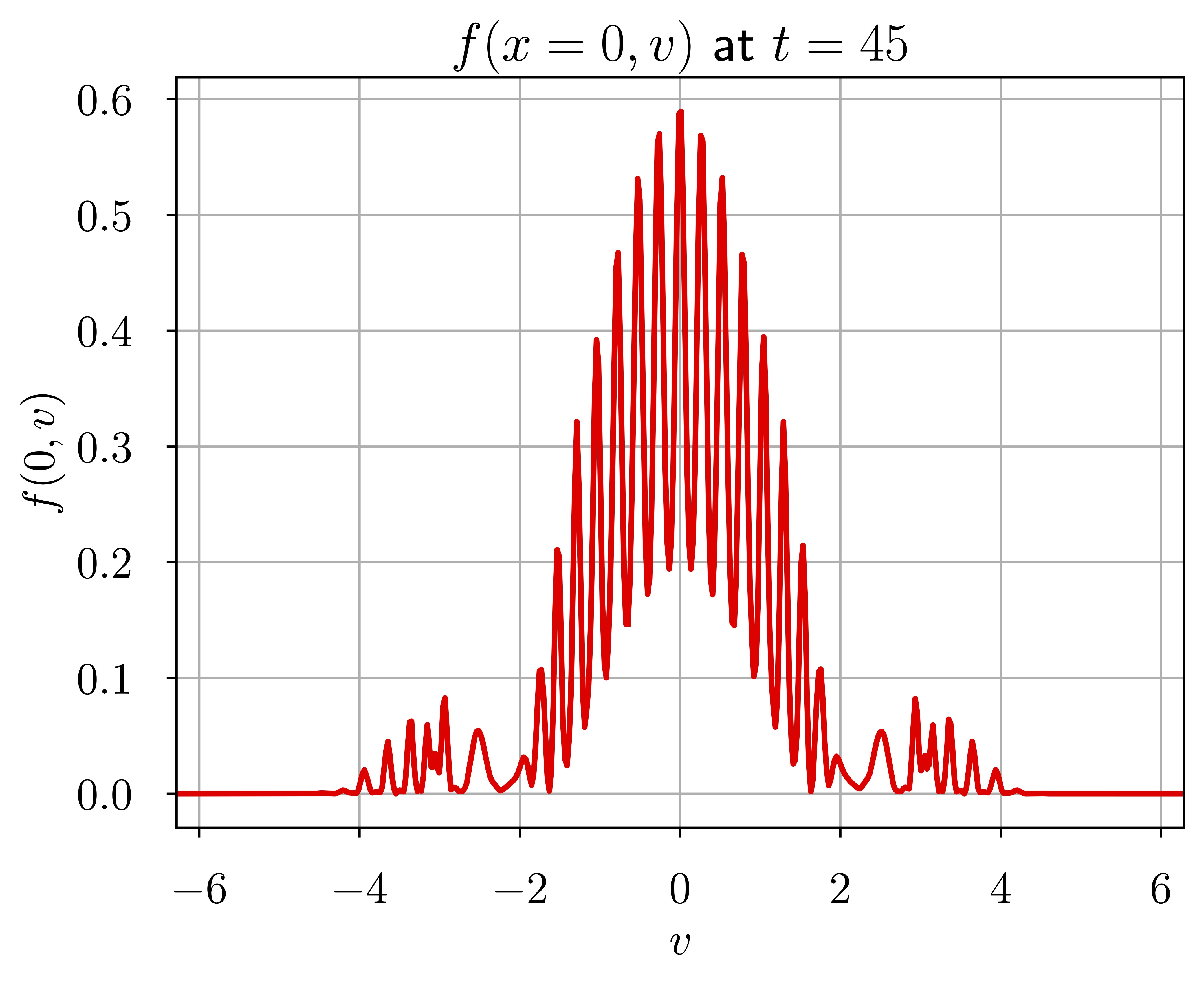} \\
   (c)\includegraphics[width=0.44\textwidth]{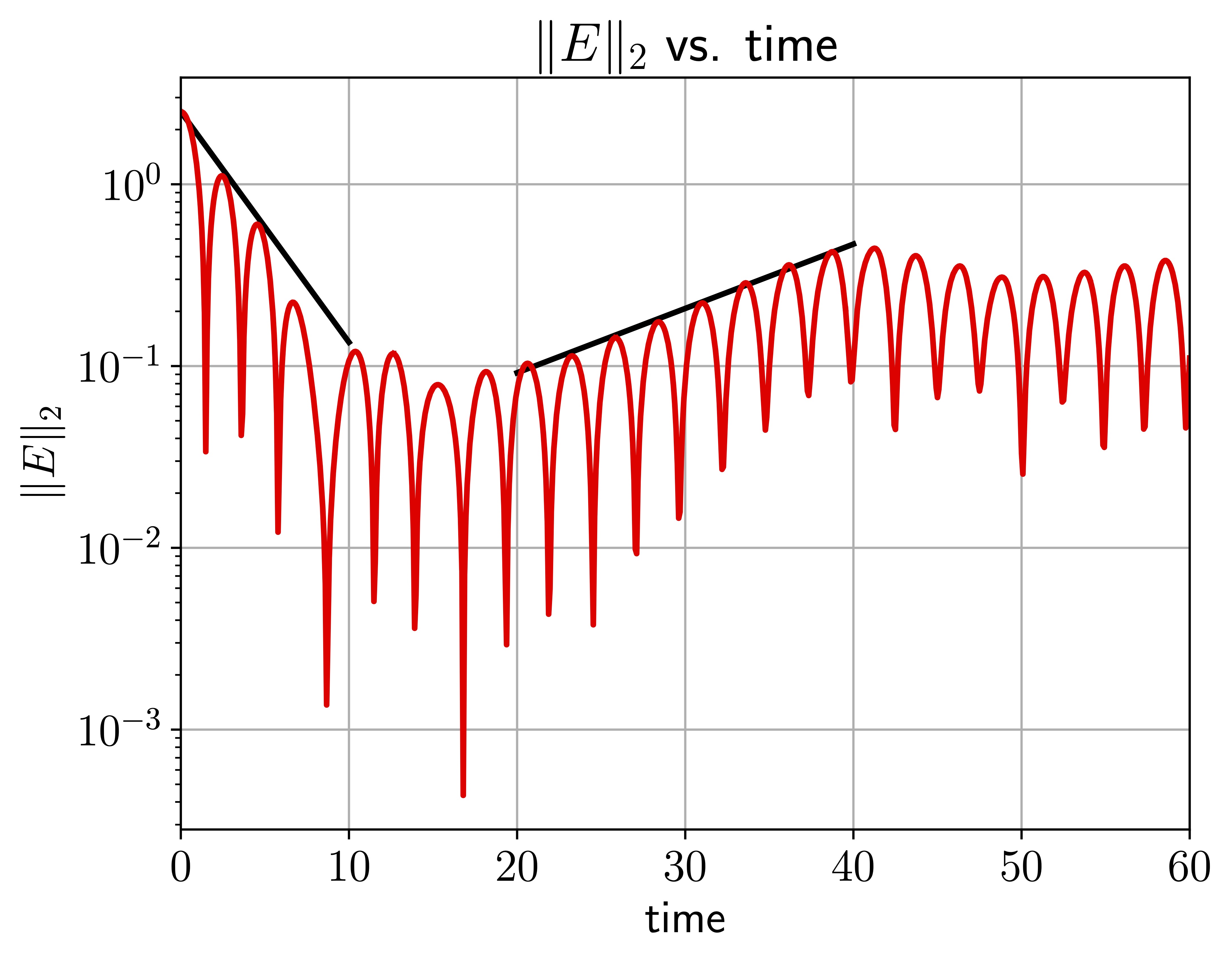} &
   (d)\includegraphics[width=0.44\textwidth]{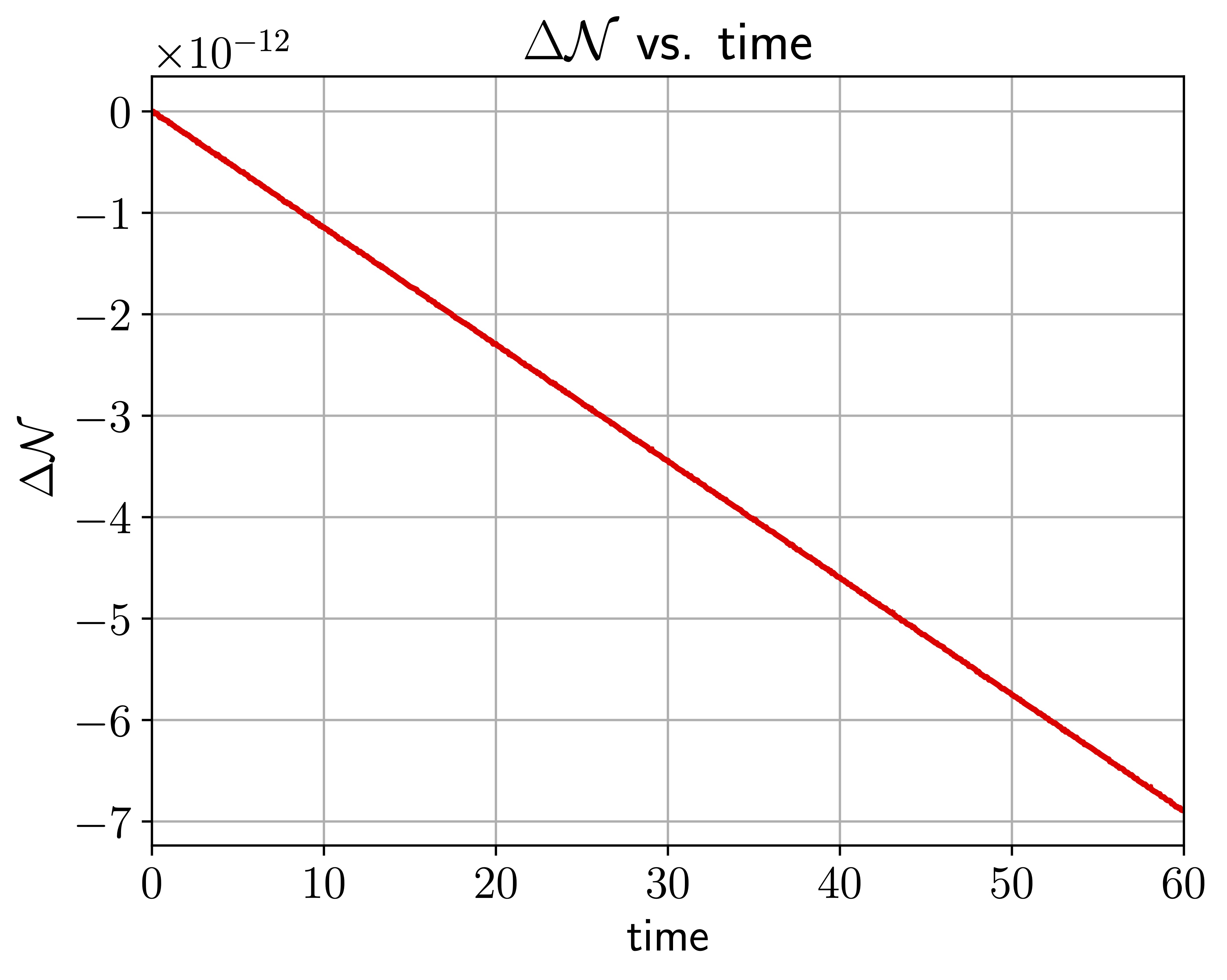}
\end{tabular}
    \caption{(\S\ref{subsec:strong_landau}: Strong Landau damping) 
       Panel (a) shows the phase space plot of the distribution function, $f(t\!=\!45,x,v)$. Panel (b) shows the slice $f\left(t\!=\!45,x\!=\!0,v\right)$, which demonstrates that the positivity-preserving limiter maintains a non-negative distribution while retaining the resolved fine-scale structure.
Panel (c) shows the time history of the $L^{2}$-norm of the electric field, $\|E(t,\cdot)\|_{2}$, over $t\in[0,60]$, while Panel (d) shows the particle number drift over the same time window. The electric field history exhibits the characteristic damping followed by nonlinear growth in the strongly perturbed regime.
These results were computed using fourth-order operator-splitting with the $\morder\!=\!4$ SLDG method, $N_x\times N_v\!=\!128\times128$ elements, and $\mathrm{CFL}\!=\!5$.}
\label{fig:strong-landau-dist}
\end{center}
\end{figure}

\subsection{Two-species model}\label{subsec:two_species}
We next consider the two-species 1D1V Vlasov-Amp\`ere model, in which both ions and electrons are
advanced self-consistently and are coupled through the electric field. In contrast to the
single-species case, the field is determined by the combined charge response of the two
distributions, and the dynamics can exhibit pronounced multi-scale behavior due to disparate masses and/or
temperatures. The tests below are designed to verify accuracy in this coupled setting and to
assess long-time robustness under stiff electron time scales and comparatively slow ion evolution.

\subsubsection{Accuracy test: method of manufactured solution}
\label{subsec:mms_two_species}
We begin by again verifying the order accuracy for the two-species 1D1V Vlasov-Amp\`ere solvers using a manufactured solution. The forced system is
\begin{flalign}
\label{eq:mms_vlasov_two}
&f_{s,t}+v f_{s,x}+\frac{q_s}{m_s}E f_{s,v}=\psi_s, \quad E_{,x}=\sigma, \quad \sigma=\sum_{s}q_{s}\rho_{s}, \quad E_{,t}+J=S_2.
\end{flalign}
The problem is posed on $ (t,x,v)\in[0,1]\times[-\pi,\pi]\times[-\pi,\pi],$ with periodic boundary conditions in $x$. We take
\begin{equation}
m_i=1,\quad m_e=1,\quad q_i=1,\quad q_e=-1.
\end{equation}
The exact ion and electron distributions and the Gauss-consistent electric field are chosen as follows:
\begin{equation}
\label{eq:mms_exact_f_i}
f_i(t,x,v)=e^{-\frac{(4v-1)^2}{4}}
    \left(2-A\right), \quad
    f_e(t,x,v)=e^{-\frac{(4v-1)^2}{4}}
    \left(2-B\right), \quad \text{and} \quad
    E(t,x)=\frac{\sqrt{\pi}}{4}
    \left(A+B\right),
\end{equation}
where
\begin{equation}
A:=\sin(2x-2\pi t) \quad \text{and} \quad B:=\cos(2x-2\pi t).
\end{equation}
Substitution of these ``exact'' solutions into the Vlasov system yields the following source terms:
\begin{align}
\label{eq:mms_source_psi_i_two}
\psi_i(t,x,v)
=
e^{-\frac{(4v-1)^2}{4}}
\bigg[
(2\pi-2v)B-\frac{\sqrt{\pi}}{2m_i}(4v-1)
\big(2-A\big)
\big(A+B\big)
\bigg], \\
\label{eq:mms_source_psi_e_two}
\psi_e(t,x,v)
=
e^{-\frac{(4v-1)^2}{4}}
\bigg[
(2v-2\pi)A+\frac{\sqrt{\pi}}{2m_e}(4v-1)
\big(2-B\big)
\big(A+B\big)
\bigg],
\end{align}
and 
\begin{equation}
\label{eq:mms_source_S2_two}
S_2(t,x)=\sqrt{\pi}
    \left(\frac{4\pi-1}{8}\right)
    \bigl(A-B\bigr).
\end{equation}
This example is a direct extension of the single-species manufactured solution test case of \cite{RossmanithSeal2011,thesis:Seal2012} that was describein in \S\ref{subsec:mms_single}.

We split the forced problem into the following two sub-problems:
\begin{align}
\label{eq:mms_problem_A_two}
\textnormal{Problem $\mathcal{A}$:}& \quad
    f_{s,t}+v f_{s,x}=\psi_s(t,x,v),
    \quad
    E_{,t}=0, \quad s\in\{e,i\}, \\
\label{eq:mms_problem_B_two}
\textnormal{Problem $\mathcal{B}$:}& \quad
    f_{s,t}+\frac{q_s}{m_s}E f_{s,v}=0,
    \quad
    E_{,t}+J=S_2(t,x), \quad s\in\{e,i\}.
\end{align}
Problem \(\mathcal A\) can still be solved using the method of characteristics, but in this setting, the distribution is no longer constant along each characteristic; instead, we need to compute the following integral:
\begin{equation}
\label{eq:mms_SL_update_A_two}
    f_s\left(t^{n+1},x,v\right)
    =
    f_s\left(t^n,x-v\Delta t,v\right)
    +
    \int_{t^n}^{t^{n+1}}
    \psi_s\Bigl(\tau,x-v\left(t^{n+1}-\tau\right),v\Bigr)\,d\tau.
\end{equation}
Problem \(\mathcal B\) is advanced by
the same semi-Lagrangian velocity tracing used in the unforced scheme, but with the field equation
modified by the source \(S_2\). 

Errors are again computed via the approximate relative $L^2$ errors defined via \eqref{eqn:error}.
A convergence study for this problem is shown in Table~\ref{va:mms-fi_two-species}, which confirms that the scheme attains its expected order accuracy. In these simulations, the final time is $t\!=\!1$, and the CFL number is fixed at $\text{CFL}\!=\!2$. We note that the errors decrease at the correct rates for both the second-order splitting with the $\morder\!=\!2$ SLDG method (Columns 2 and 3) and for the fourth-order splitting with the $\morder\!=\!4$ SLDG method (Columns 4 and 5). 

The convergence results reported in these tables were generated using the DoGPack code~\cite{dogpack} and can be reproduced from the code via the following commands:
\begin{tcolorbox}
\begin{verbatim}
cd $DOGPACK/apps/2d/two_species_va_1d1v/manufactured_soln_convergence_test;
make errortable;
\end{verbatim}
\end{tcolorbox}

\begingroup
\setlength{\tabcolsep}{10pt} %
\renewcommand{\arraystretch}{1.5} %
\begin{table}
\begin{center}
\begin{tabular}{|c||c|c||c|c|}
    \hline
    \textbf{$N_{x} \times N_{v}$} & \textbf{error: $2^{\text{nd}}$ order split} & $\log_{2}(\text{ratio})$ &
                    \textbf{error: $4^{\text{th}}$ order split} & $\log_{2}(\text{ratio})$ \\ \hline\hline
     $5\times5$     & $4.5376\times10^{-1}$ & --      & $6.3249\times10^{-2}$ & --      \\ \hline
    $10\times10$   & $1.7683\times10^{-1}$ & $1.360$ & $1.1019\times10^{-2}$ & $2.521$ \\ \hline
    $20\times10$   & $3.4290\times10^{-2}$ & $2.367$ & $6.5239\times10^{-4}$ & $4.078$ \\ \hline
    $40\times40$   & $8.3346\times10^{-3}$ & $2.041$ & $3.9336\times10^{-5}$ & $4.052$ \\ \hline
    $80\times80$   & $2.1014\times10^{-3}$ & $1.988$ & $2.3877\times10^{-6}$ & $4.042$ \\ \hline
    $160\times160$ & $5.3166\times10^{-4}$ & $1.983$ & $1.4242\times10^{-7}$ & $4.067$ \\ \hline
    $320\times320$ & $1.3438\times10^{-4}$ & $1.984$ & $8.6445\times10^{-9}$  & $4.042$ \\ \hline
    \hline\hline
    \textbf{$N_{x} \times N_{v}$} & \textbf{error: $2^{\text{nd}}$ order split} & $\log_{2}(\text{ratio})$ &
                    \textbf{error: $4^{\text{th}}$ order split} & $\log_{2}(\text{ratio})$ \\ \hline\hline
    $5\times5$     & $4.8193\times10^{-1}$ & --      & $6.8822\times10^{-2}$ & --      \\ \hline
    $10\times10$   & $1.7967\times10^{-1}$ & $1.423$ & $1.1313\times10^{-2}$ & $2.605$ \\ \hline
    $20\times20$   & $3.3933\times10^{-2}$ & $2.405$ & $6.5213\times10^{-4}$ & $4.117$ \\ \hline
    $40\times40$   & $8.2723\times10^{-3}$ & $2.036$ & $3.8877\times10^{-5}$ & $4.068$ \\ \hline
    $80\times80$   & $2.0944\times10^{-3}$ & $1.982$ & $2.3784\times10^{-6}$ & $4.031$ \\ \hline
    $160\times160$ & $5.3113\times10^{-4}$ & $1.979$ & $1.4185\times10^{-7}$ & $4.068$ \\ \hline
    $320\times320$ & $1.3436\times10^{-4}$ & $1.983$ & $8.6162\times10^{-9}$ & $4.041$ \\ \hline
\end{tabular}
\end{center}
\caption{(\S\ref{subsec:mms_two_species}: Two-Species Manufactured Solution) Manufactured solution convergence study for the ion and electron PDFs for the two-species semi-Lagrangian DG method. The top half of the table is for the ion PDF, the bottom half is for the electron PDF. Reported in this table are the relative $L^2$-errors at increasing mesh resolutions along with the log of the error ratios for both the second-order splitting with $\morder\!=\!2$ SLDG in phase space and fourth-order splitting with $\morder\!=\!4$ SLDG in phase space. Note that $\log_2$ of the error ratio gives an estimate for the order of convergence, since each row in the table represents a doubling of mesh resolution compared to the previous row. All runs are computed to a final time of $t=1$ with $\text{CFL}\!=\!2$ over the domain $(x,v)\in[-\pi,\pi]\times[-\pi,\pi]$ with periodic boundary conditions in $x$.}
\label{va:mms-fi_two-species}
\end{table}
\endgroup

\subsubsection{Ion-acoustic wave}
\label{subsec:iaw}
We assess long-time robustness of the proposed scheme on the ion-acoustic wave benchmark, a standard electrostatic test case from the computational plasma physics community (e.g., see Chen, Chac{\'o}n, and Barnes~\cite{ChenChaconBarnes2011}). In this example, a wave is triggered by an ion density perturbation and propagates through ion compression/expansion driven by ion thermal motion, while the electron thermal motion provides only incomplete Debye shielding~\cite{Stix1992}. In linear kinetic theory, the ion-acoustic wave branch is characterized by the dielectric condition
\begin{equation}
\varepsilon(\omega,k)=1+\sum_{s\in\{e,i\}}\chi_{s}(\omega,k)=0,
\end{equation}
where \(\chi_s\) are species susceptibilities, \(\omega\) is the wave frequency, and \(k\) is the wavenumber. In the asymptotic regime:
\begin{equation}
    r_T:=\frac{T_e}{T_i}\gg 1,\quad r_m:=\frac{m_i}{m_e}\gg 1,
\end{equation}
one has \(\omega\ll\omega_{pe}\) and \(k\lambda_{De}\ll 1\), so the problem is genuinely multi-scale; ions set the acoustic time scale, while fast electrons dominate shielding and numerical stiffness.

In the specific example considered from Chen, Chac{\'o}n, and Barnes~\cite{ChenChaconBarnes2011}, each species is initialized with a Maxwellian perturbed by a small sinusoidal density mode with zero initial electric field: 
\begin{equation}
f_{s}(t\!=\!0,x,v)=\sqrt{\frac{m_{s}}{2\pi T_{s}}}
\exp\!\Big(-\frac{m_{s} v^2}{2T_{s}}\Big)\bigl(1+a\cos(kx)\bigr), \quad
E(t\!=\!0,x) \equiv 0.
\end{equation}
The various parameters are chosen as follows:
\begin{flalign}
k=\frac{2\pi}{L},\quad m_i=200,\quad T_i=10^{-4},\quad q_i=1,\quad m_e=1,\quad T_e=1,\quad q_e=-1,\quad a=0.2,\quad L=10,
\end{flalign}
so that \(r_m=200\) and \(r_T=10^4\). The computational domain is $(t,x,v)\in[0,2000]\times[0,L]\times I_{s}$
with periodic boundary conditions in \(x\). To resolve the disparate kinetic scales, we evolve each
species on a species-dependent velocity interval $I_e=\left[-7.5,\,7.5\right]$ and $I_i=\left[-0.2,\,0.2\right]$.

In Figures \ref{fig:iaw-dist} and \ref{fig:iaw-ke}, we report on results from a run with the fourth-order operator-splitting with the $\morder\!=\!4$ SLDG method, $N_x\times N_v\!=\!128\times416$, and $\mathrm{CFL}\!=\!10$. Figure \ref{fig:iaw-dist} shows phase space plots of the (a) initial ion, (b) initial electron, (c) final ion, and (d) final electron distribution functions. These results show that the electron distribution remains close to a Maxwellian with a weak spatial modulation, while the ion distribution stays narrowly supported in velocity and develops a clear \(x\)-dependent drift, consistent with acoustic compression/expansion and wave propagation over many periods.

To assess long-time behavior, Figure \ref{fig:iaw-ke} reports the species kinetic energies in Panels (a) and (b). The electron kinetic energy undergoes small-amplitude oscillations about its mean level, while the ion kinetic energy exhibits bounded oscillations on a much smaller scale, reflecting sustained energy exchange in this multi-scale two-species setting without spurious growth. Panels (c) and (d) of Figure \ref{fig:iaw-ke} report the total particle number drift, which remains on the order of \(\mathcal{O}(10^{-10})\) for both species throughout the simulation, demonstrating the robust long-time stability of the discretization for this coupled Vlasov-Amp\`ere test. 

The figures shown here were generated using the DoGPack code~\cite{dogpack} and can be reproduced via the following commands:
\begin{tcolorbox}
\begin{verbatim}
cd $DOGPACK/apps/2d/two_species_va_1d1v/ion_acoustic_wave; make; dog.exe; 
make plotfull; make plotcon;
\end{verbatim}
\end{tcolorbox}

\begin{figure}
\begin{center}
\begin{tabular}{cc}
   (a)\includegraphics[width=0.44\textwidth]{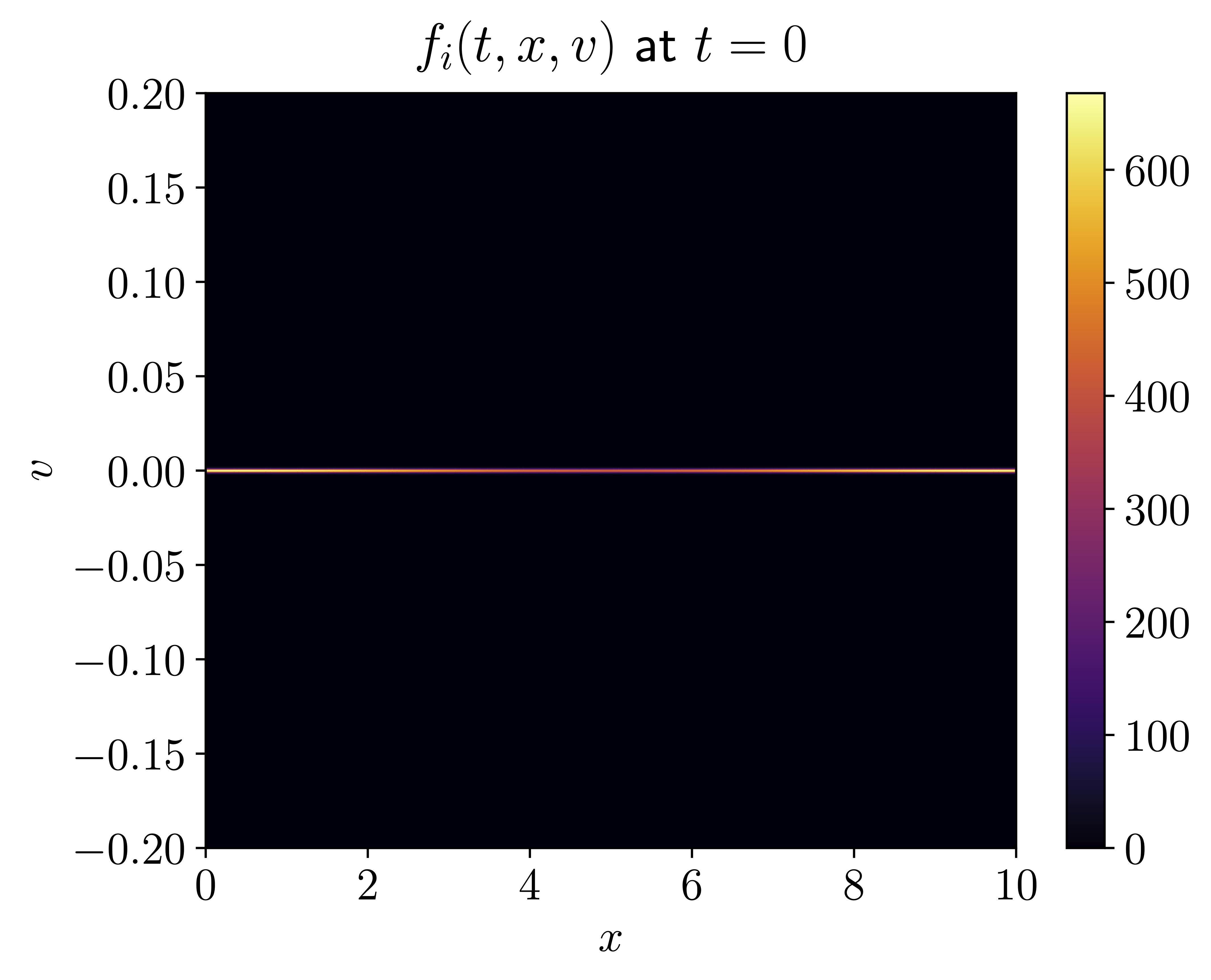} &
   (b)\includegraphics[width=0.44\textwidth]{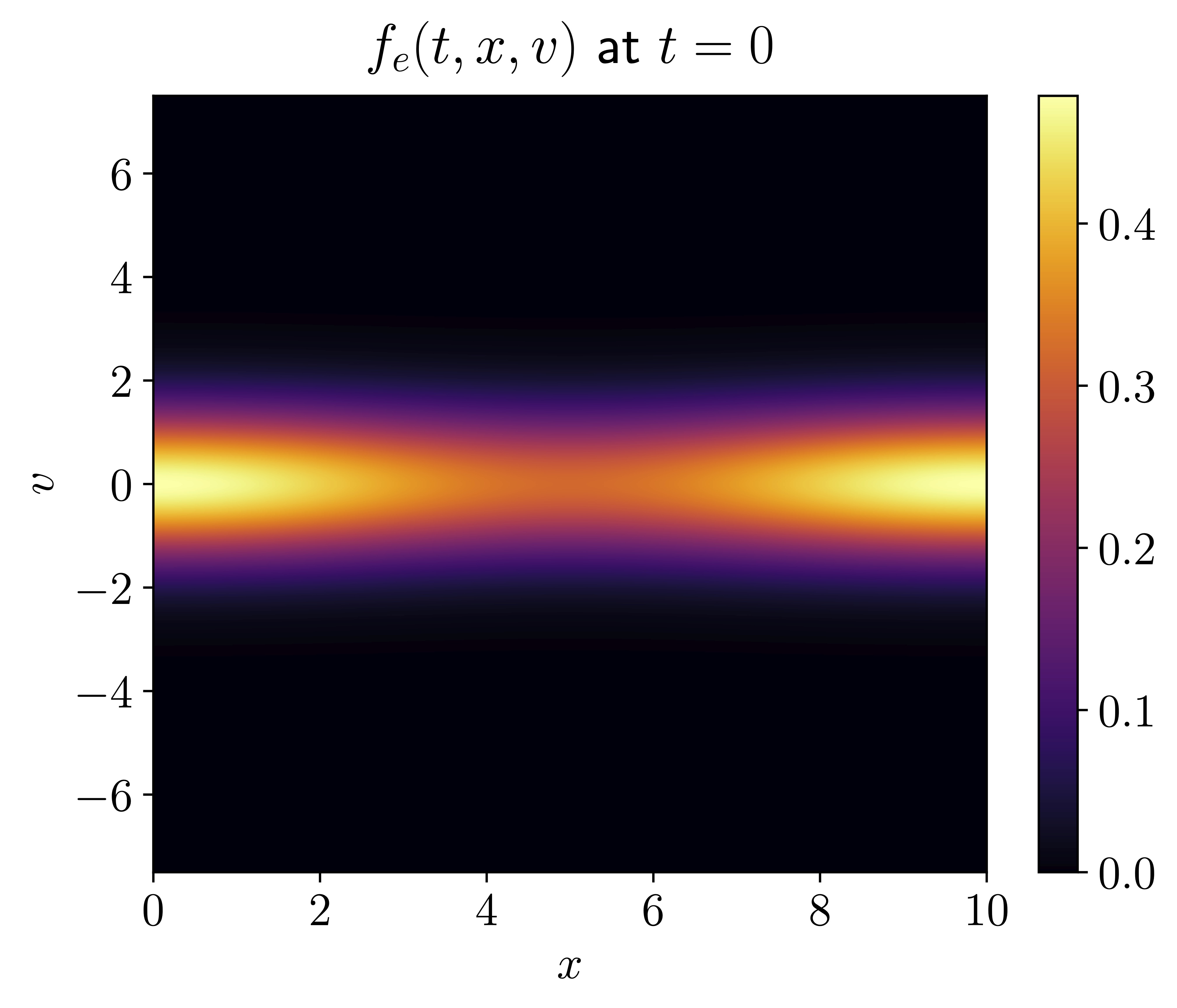} \\
   (c)\includegraphics[width=0.44\textwidth]{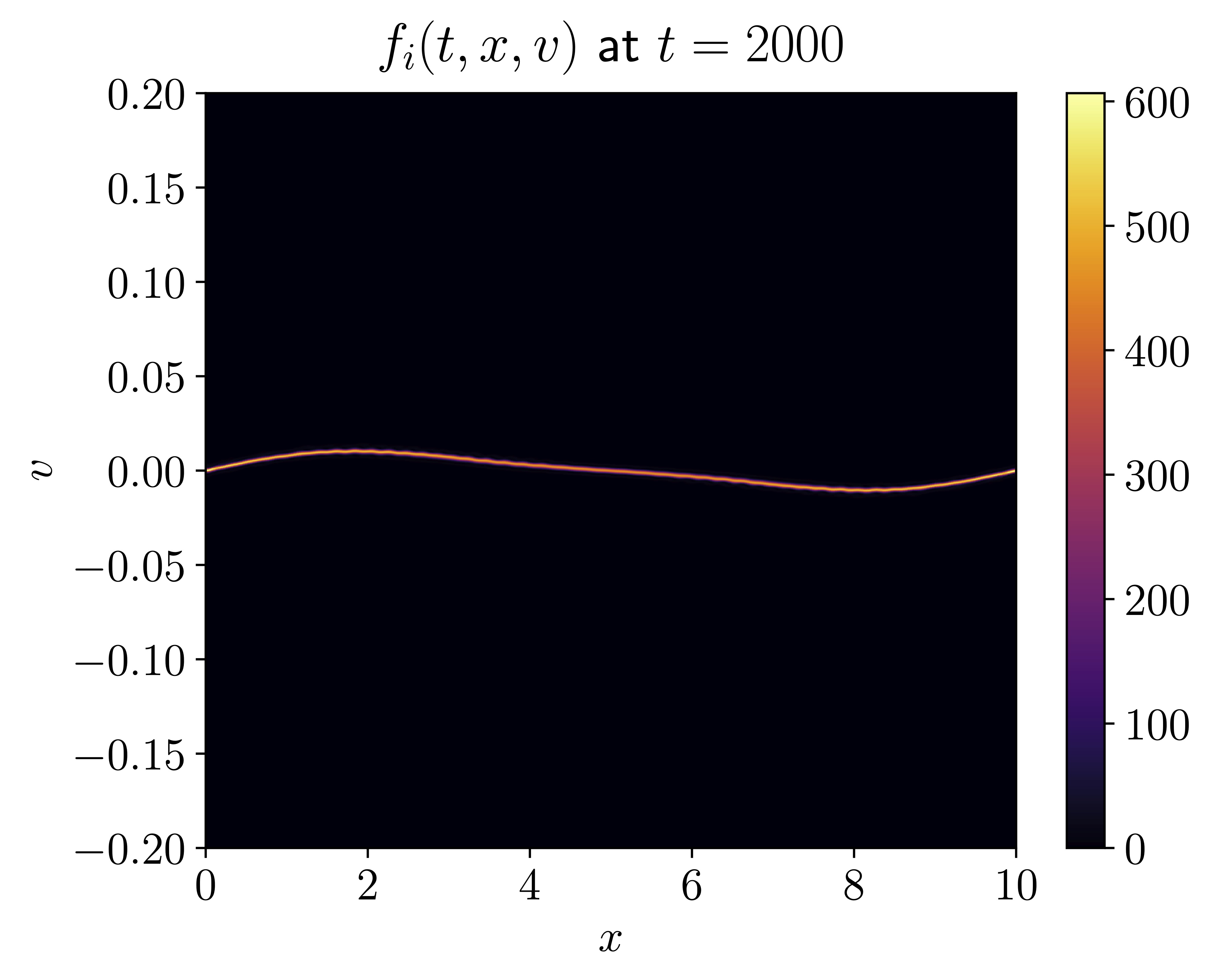} &
   (d)\includegraphics[width=0.44\textwidth]{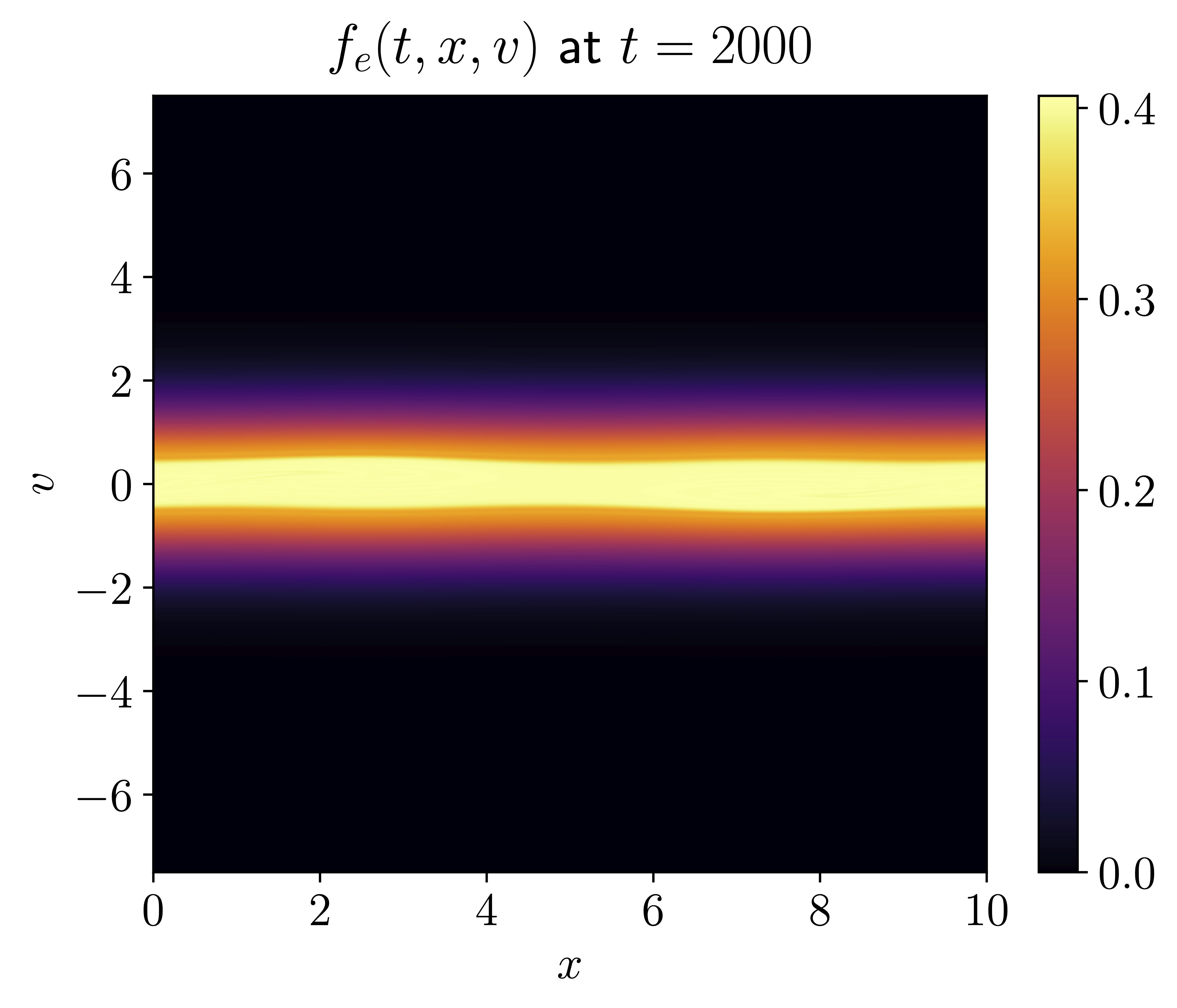}
\end{tabular}
    \caption{(\S\ref{subsec:iaw}: Ion-acoustic wave) Phase space plots of the ion and electron distribution functions: $f_i(t,x,v)$ and $f_e(t,x,v)$. Panels (a) and (b) show the ion and electron distribution functions at $t\!=\!0$, respectively; Panels (c) and (d) show the ion and electron distribution functions at $t=2000$, respectively. This simulation uses fourth-order operator-splitting with the $\morder\!=\!4$ SLDG method, $N_x\times N_v\!=\!128\times416$, and $\mathrm{CFL}\!=\!10$.}
\label{fig:iaw-dist}
\end{center}
\end{figure}

\begin{figure}
\begin{center}
\begin{tabular}{cc}
   (a)\includegraphics[width=0.44\textwidth]{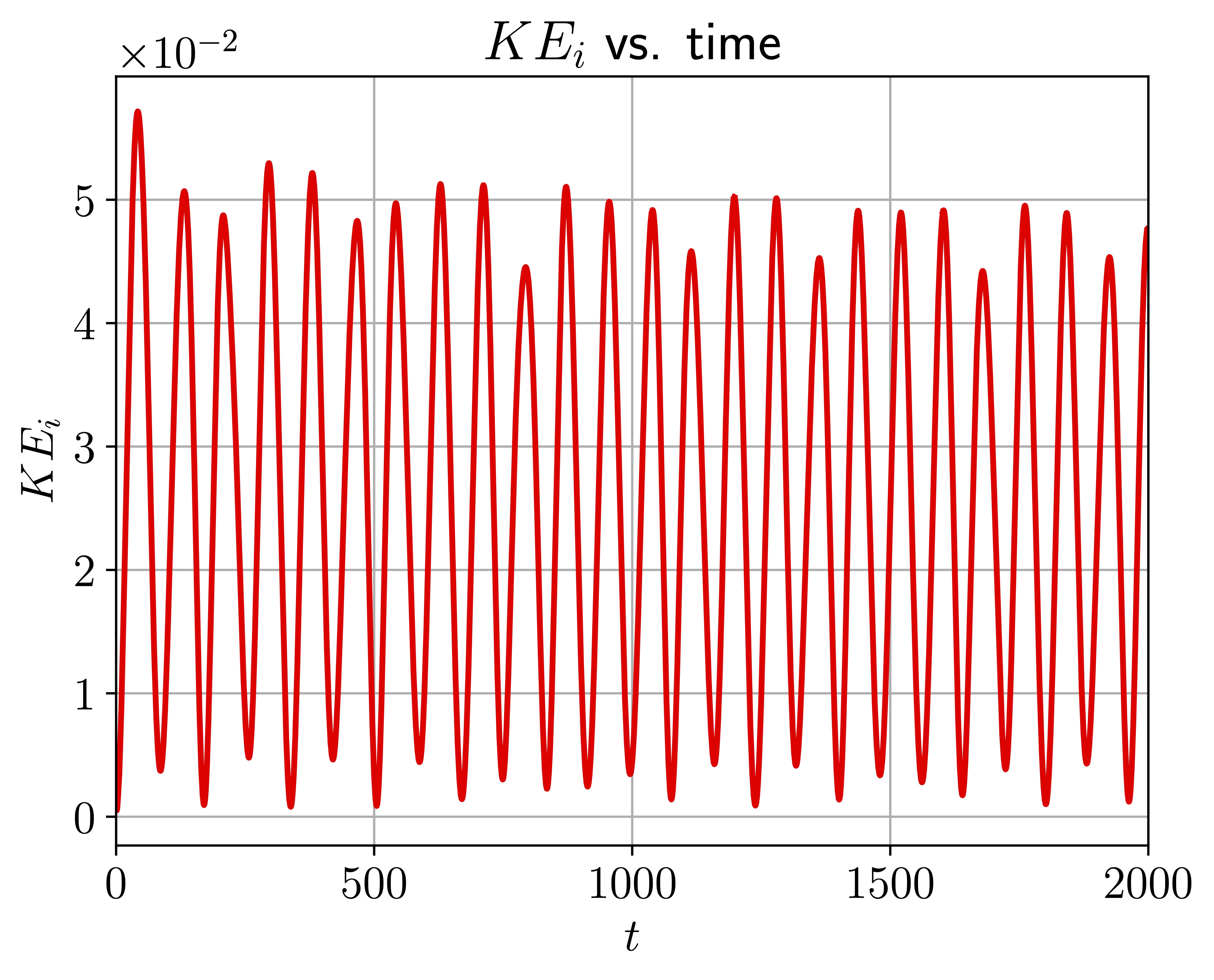} &
   (b)\includegraphics[width=0.44\textwidth]{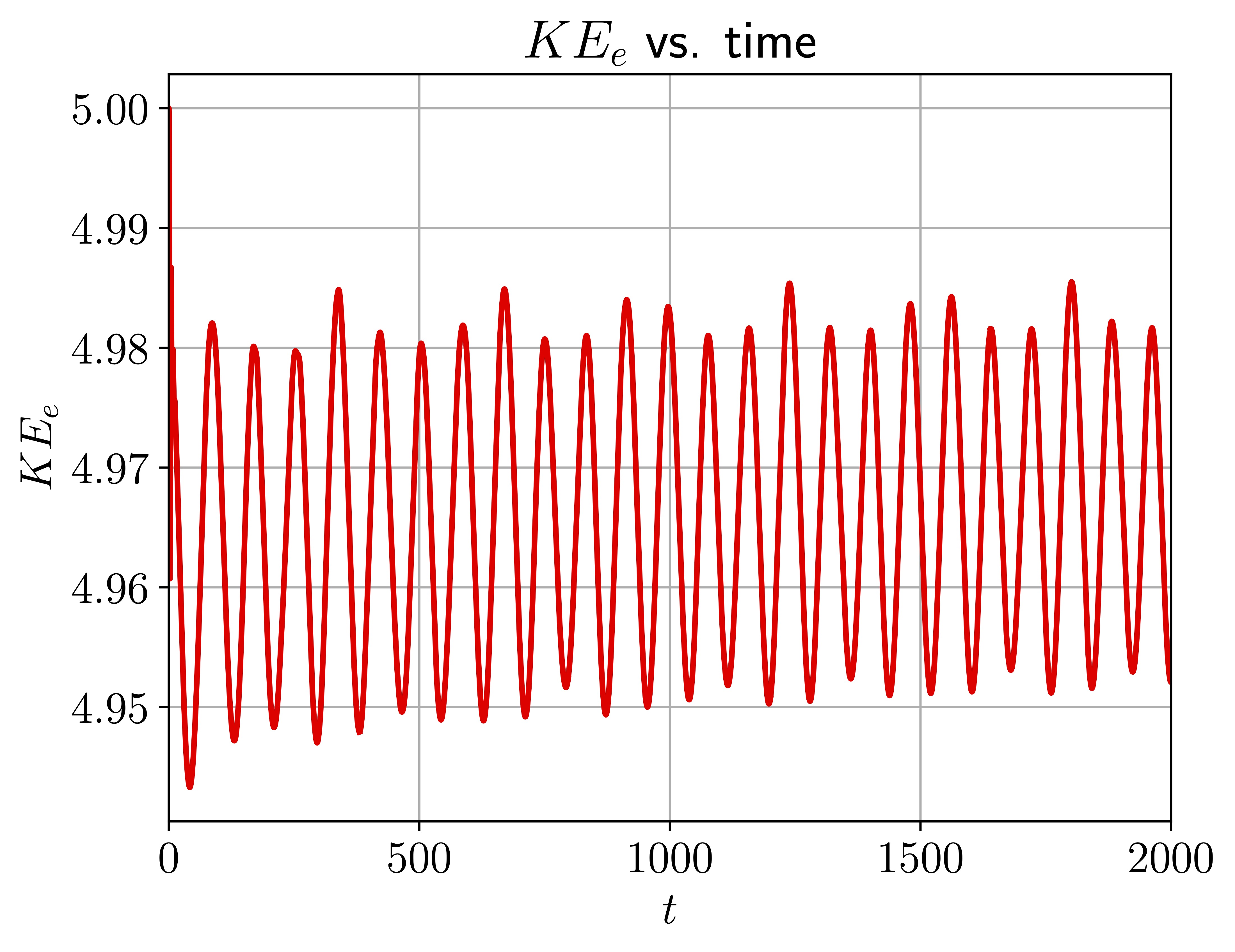} \\
   (c)\includegraphics[width=0.44\textwidth]{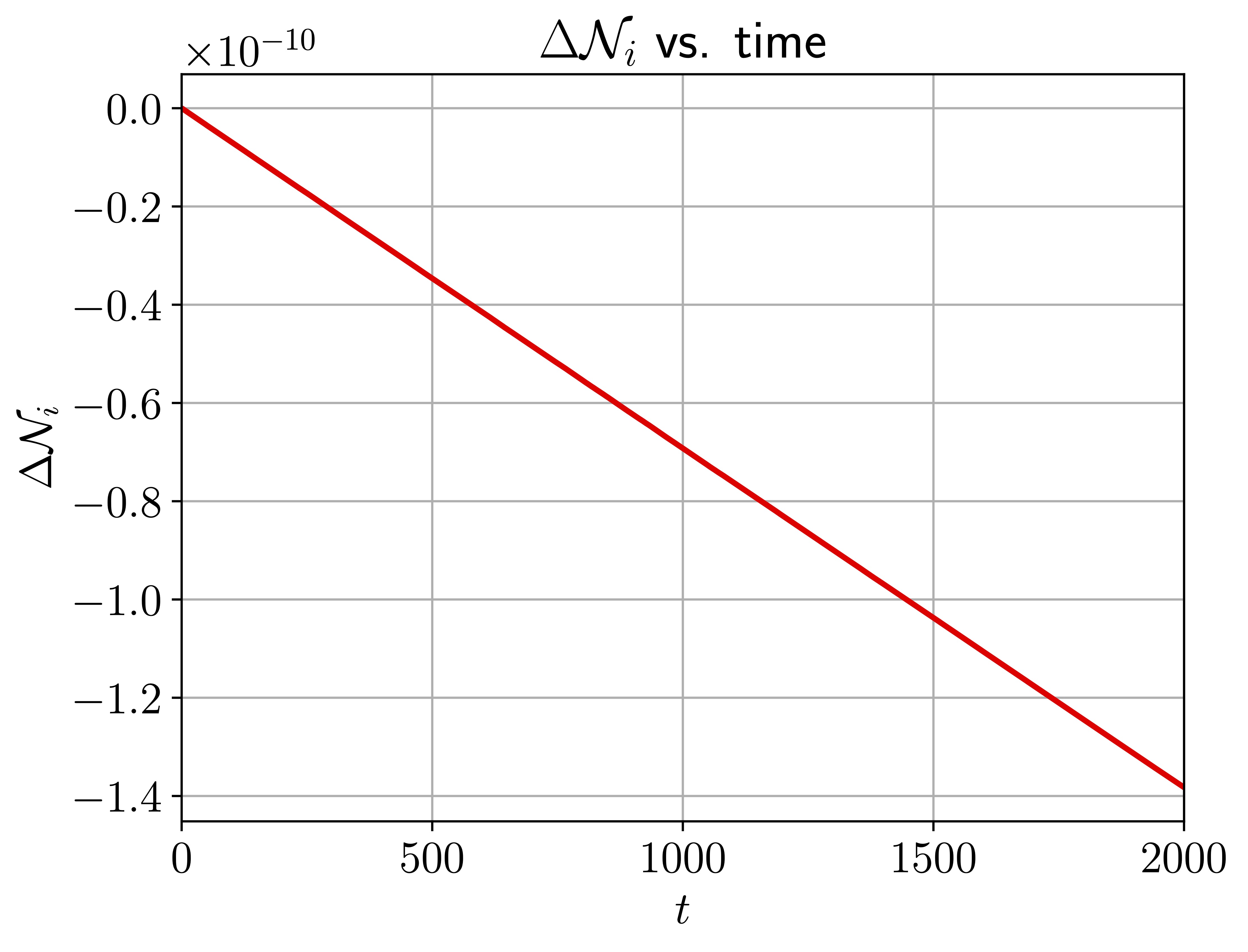} &
   (d)\includegraphics[width=0.44\textwidth]{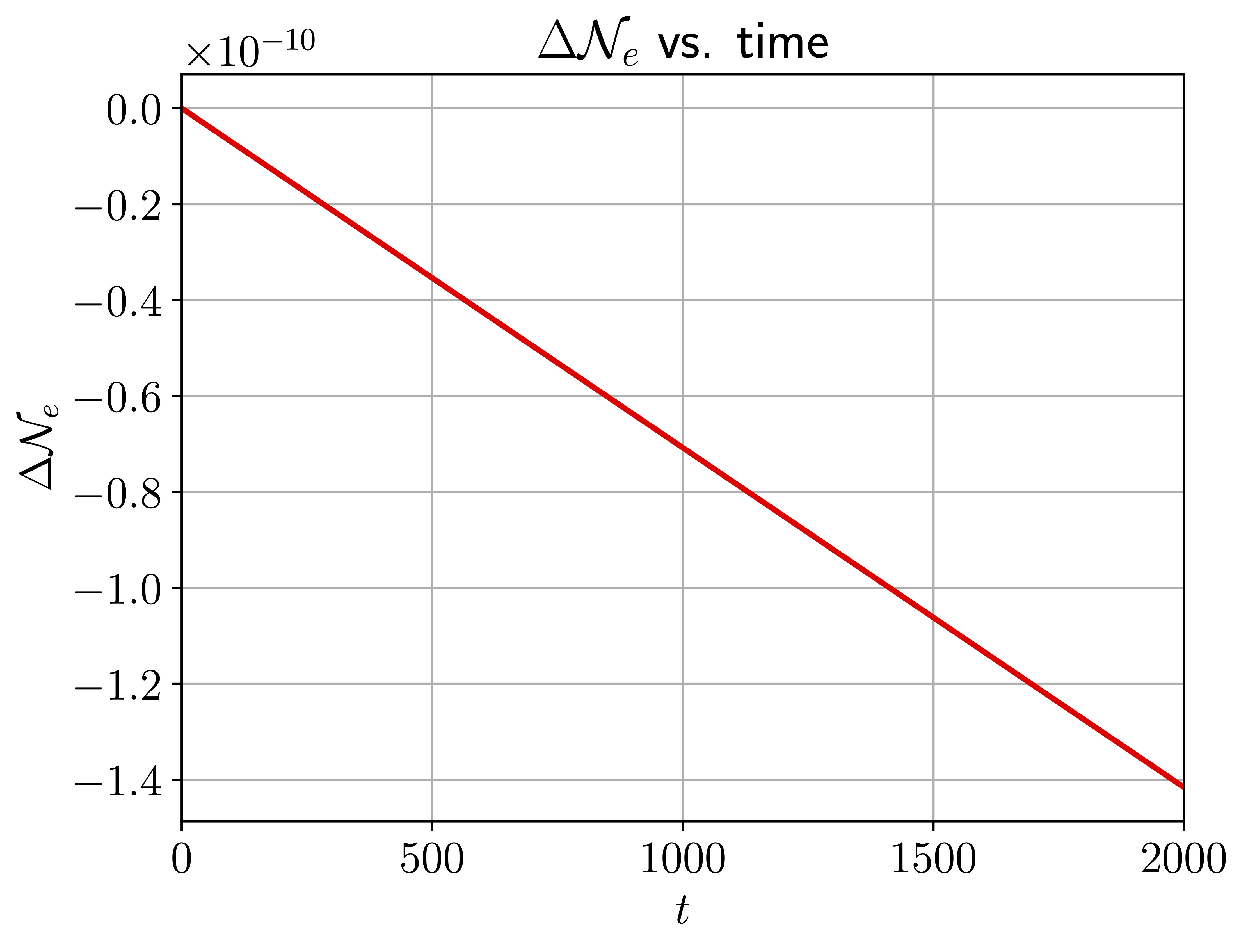}
\end{tabular}
    \caption{(\S\ref{subsec:iaw}: Ion-acoustic wave) Time histories over $t\in[0,2000]$ of the (a) ion kinetic energy, (b) electron kinetic energy, (c) ion particle number drift, and (d) electron particle number drift.}
\label{fig:iaw-ke}
\end{center}
\end{figure}

\subsubsection{Ion-acoustic shock}
\label{subsec:iasw}
The ion-acoustic shock test is a stringent, nonlinear, multiscale benchmark considered by several papers in the literature (e.g., see Parker, Friedman, Ray, and Birdsall \cite{article:Parker1993}, Shay and Drake \cite{article:Shay2007}, Chen, Chac{\'o}n, and Barnes~\cite{ChenChaconBarnes2011}, and Taitano, Burby, and Alekseenko~\cite{TaitanoBurbyAlekseenko2026}). In this example, an ion-acoustic (dispersive) shock forms when a propagating ion-acoustic wave steepens through nonlinear effects until dispersion and effective dissipation balance the steepening, yielding a persistent shock front \cite{SagdeevUsikovZaslavsky1988}. The steepening occurs on a time scale much longer than the plasma period, while the front width is comparable to the Debye length and therefore much smaller than the ion-acoustic wavelength. These disparate scales make ion-acoustic shock formation a demanding long-time accuracy test for kinetic solvers.

The initial setup consists of a periodic domain that contains a spatially uniform background plasma of hot electrons and cool ions, onto which a traveling wave perturbation is superimposed:
\begin{equation}\label{eq:iasw_ic}
\begin{gathered}
f_s(t\!=\!0,x,v)=\sqrt{\frac{m_s}{2\pi T_s}}\,
\exp\left(-\frac{m_s}{2T_s}\left[v+\sqrt{\frac{T_e}{m_i}}\,\bigl(1-a\sin(kx)\bigr)\right]^{2}\right)\,
n_{s}^0(x), \\ E(t\!=\!0,x)=-ak\cos(kx), \quad
n_{i}^0(x)=1+a\sin(kx), \quad
n_{e}^0(x)=1+a\left(1-k^2\right)\sin(kx),
\end{gathered}
\end{equation}
where $k={2\pi}/{L}$,  is the perturbation wavenumber. The parameters are chosen as follows
\begin{equation}\label{eq:iasw_params}
m_i=1836,\, \, \, T_i=0.05,\, \, \,  q_i=1,\, \, \, 
m_e=1,\, \, \,  T_e=1,\, \, \,  q_e=-1,\, \, \, 
a=0.2,\, \, \,  L=144,\, \, \,
\end{equation}
so that $r_m=1836$ and $r_T=20$. It should be noted, the ion temperature is chosen much lower than the electron temperature to suppress ion Landau damping and the associated wave dissipation, consistent with the setup in \cite{TaitanoBurbyAlekseenko2026}.

To efficiently handle the strong scale separation, the two species are evolved on different velocity ranges. The computational domain for each species, $s$, is $(t,x,v)\in[0,5000]\times[0,L]\times I_{s}$ with periodic boundary conditions in $x$, and $I_e=[-12.1,\,12.1]$ and $I_i=[-0.12,\,0.12]$.
This configuration resolves the Debye-scale shock transition layer while tracking the long-time nonlinear steepening and subsequent shock formation in a regime with widely separated ion/electron kinetic scales.
A numerical simulation run to a final time of $t=5000$, using fourth-order operator-splitting with the $\morder\!=\!4$ SLDG method, $N_x\times N_v\!=\!256\times512$, and $\mathrm{CFL}\!=\!10$.

Phase space plots are reported in Figure \ref{fig:ias-dist}; these illustrate the multi-scale response of the two species. The electron distribution remains close to a drifting Maxwellian away from the front, while localized distortions develop near the shock region at late times. In contrast, the ion distribution, develops a pronounced \(x\)-dependent drift with sharp gradients and fine-scale structure near the shock, reflecting nonlinear steepening and compression. 

Panels (a) and (b) of Figure~\ref{fig:ias-density} show the ion and electron number densities at $t=5000$, respectively. Both profiles exhibit a sharp transition between regions of enhanced and depleted density, consistent with nonlinear steepening and ion-acoustic shock formation. Panels (c) and (d) of Figure~\ref{fig:ias-density} show snapshots of the electric field at varying times, illustrating the formation and development of an ion-acoustic shock. The initially smooth wave steepens in time, a sharp front forms, and the transition layer progressively compresses while localized oscillations appear near the front. By $t=5000$, the shock structure is fully developed, with a Debye-scale transition region.
Finally, Panels (e) and (f) of Figure~\ref{fig:ias-density} show the particle number drift for the ion and electron species over $t\in[0,5000]$. These drifts remain very small, again demonstrating the robust long-time stability of the discretization for this coupled Vlasov-Amp\`ere test. 

 The figures presented here were generated using the DoGPack code~\cite{dogpack}. The results can be reproduced via the following commands:
\begin{tcolorbox}
\begin{verbatim}
cd $DOGPACK/apps/2d/two_species_va_1d1v/ion_acoustic_shock; make; dog.exe; 
make plotfull; make plotmom; make plotcon;
\end{verbatim}
\end{tcolorbox}

\begin{figure}
\begin{center}
\begin{tabular}{cc}
   (a)\includegraphics[width=0.44\textwidth]{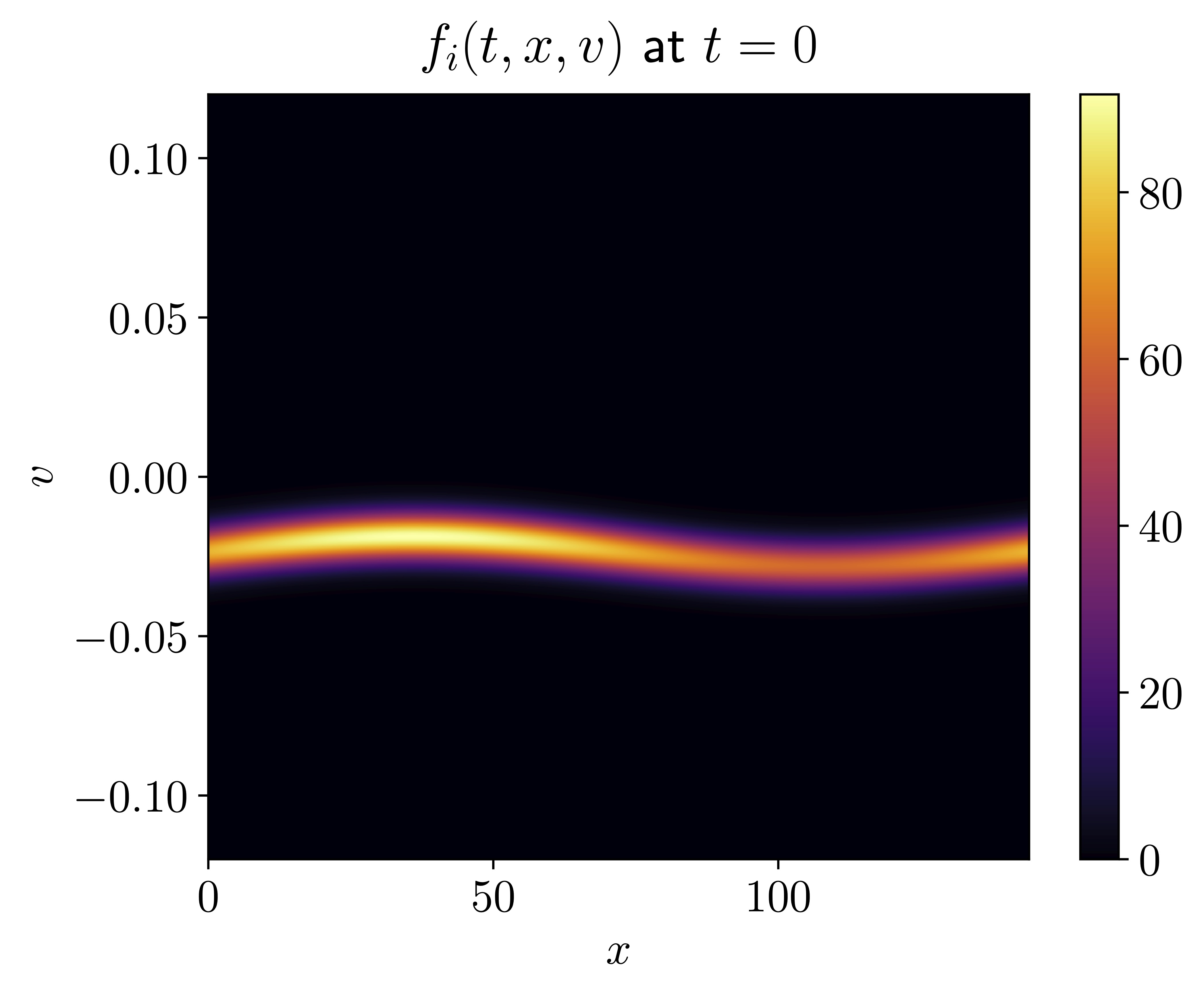} &
   (b)\includegraphics[width=0.44\textwidth]{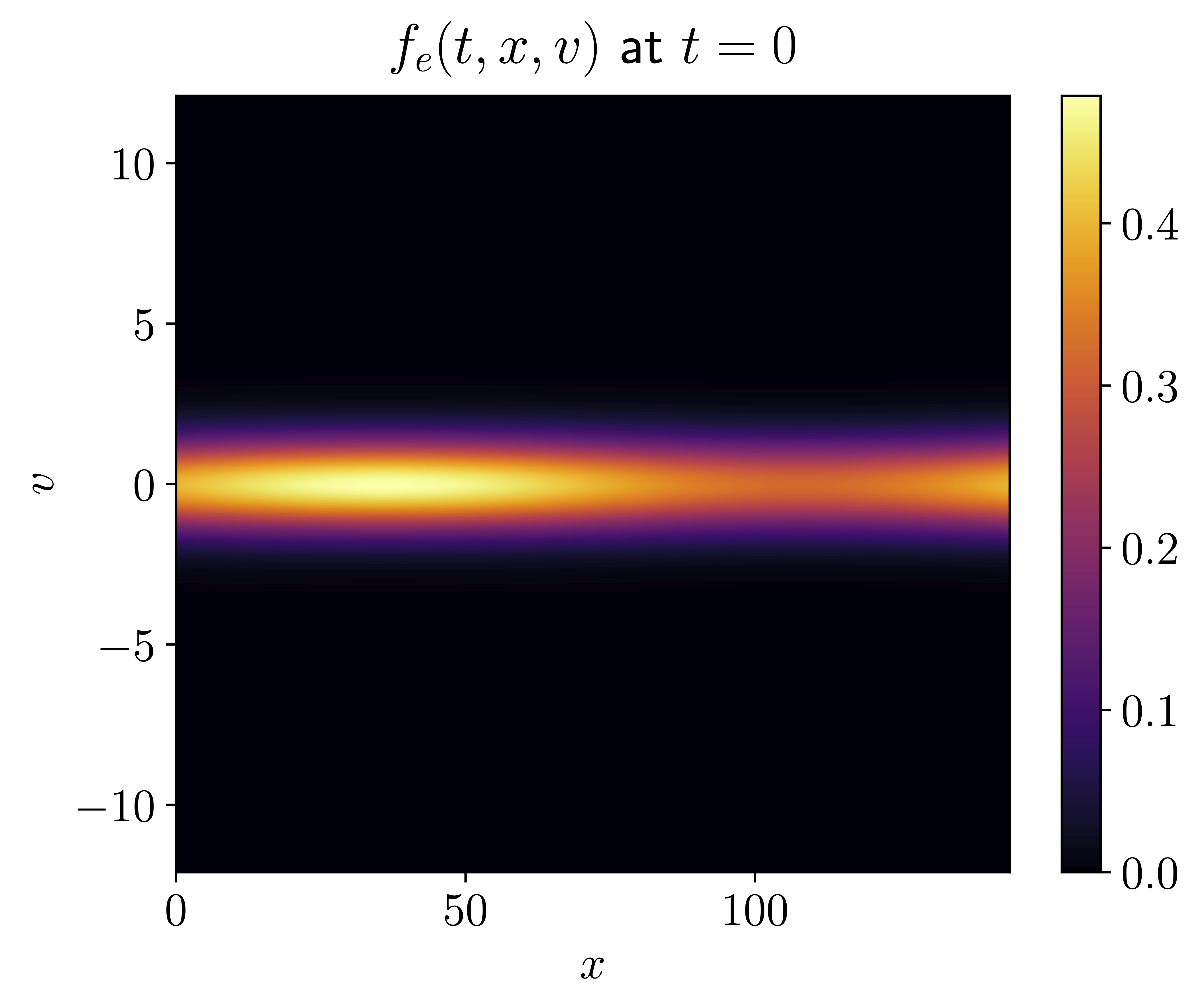} \\
   (c)\includegraphics[width=0.44\textwidth]{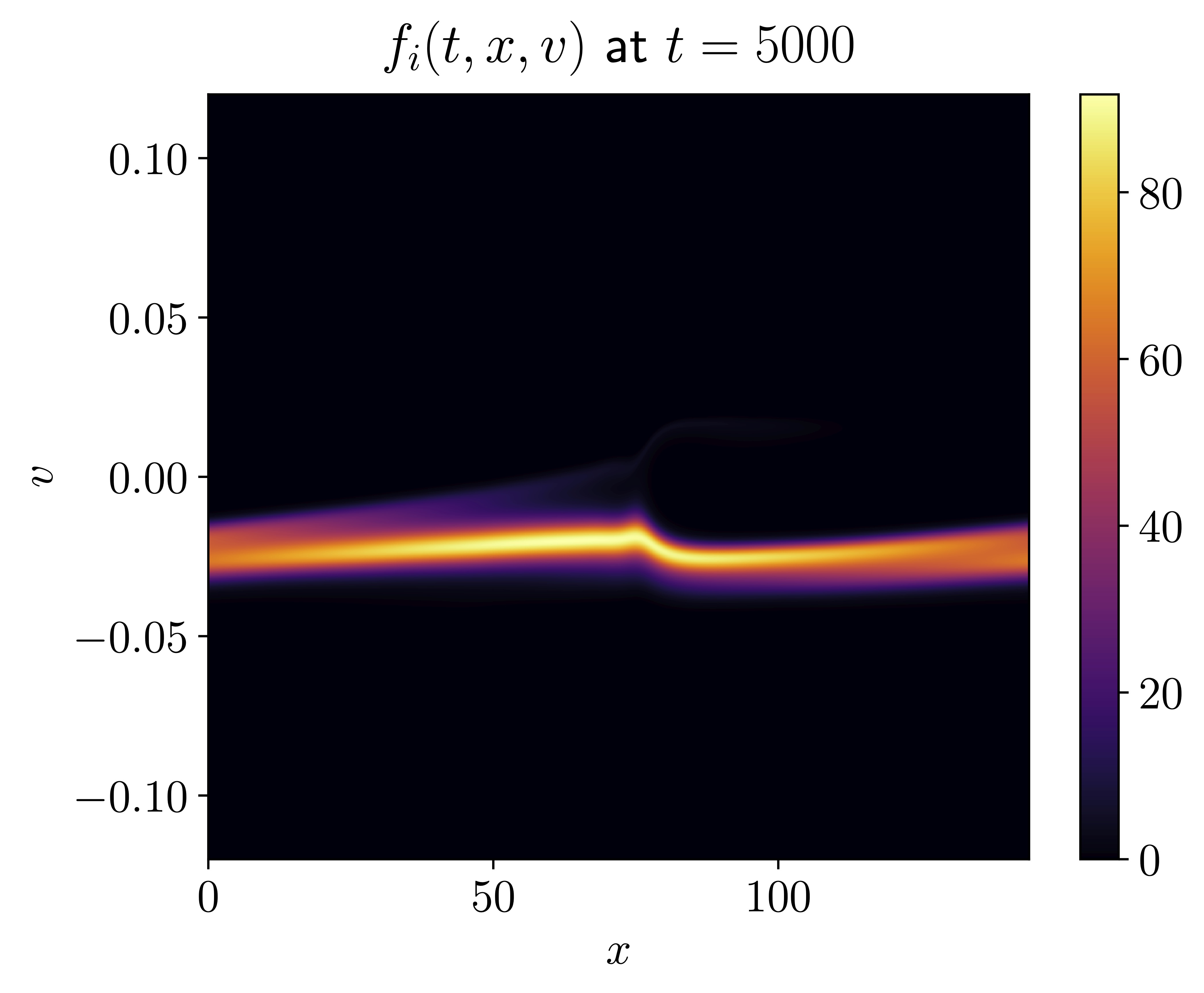} &
   (d)\includegraphics[width=0.44\textwidth]{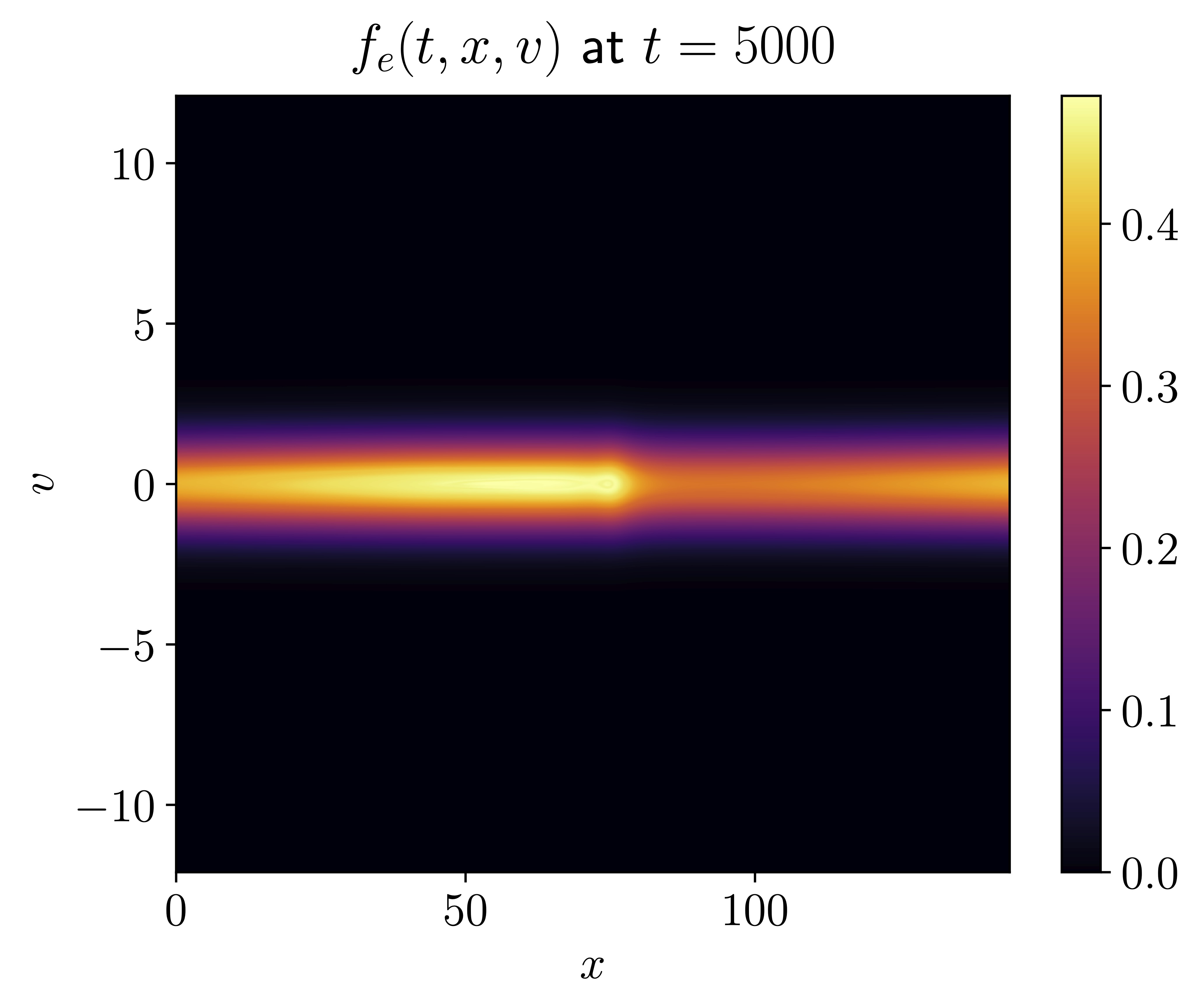}
\end{tabular}
\caption{(\S\ref{subsec:iasw}: Ion-acoustic shock) Phase space plots of the ion and electron distribution functions: $f_i(t,x,v)$ and $f_e(t,x,v)$. Panels (a) and (b) show the ion and electron distribution functions at $t\!=\!0$, respectively; Panels (c) and (d) show the ion and electron distribution functions at $t=5000$, respectively. This simulation uses fourth-order operator-splitting with the $\morder\!=\!4$ SLDG method, $N_x\times N_v\!=\!256\times512$, and $\mathrm{CFL}\!=\!10$. The positivity-preserving limiter maintains the non-negativity of the ion distribution during shock formation.}
\label{fig:ias-dist}
\end{center}
\end{figure}

\begin{figure}
\begin{center}
\begin{tabular}{cc}
   (a)\includegraphics[width=0.44\textwidth]{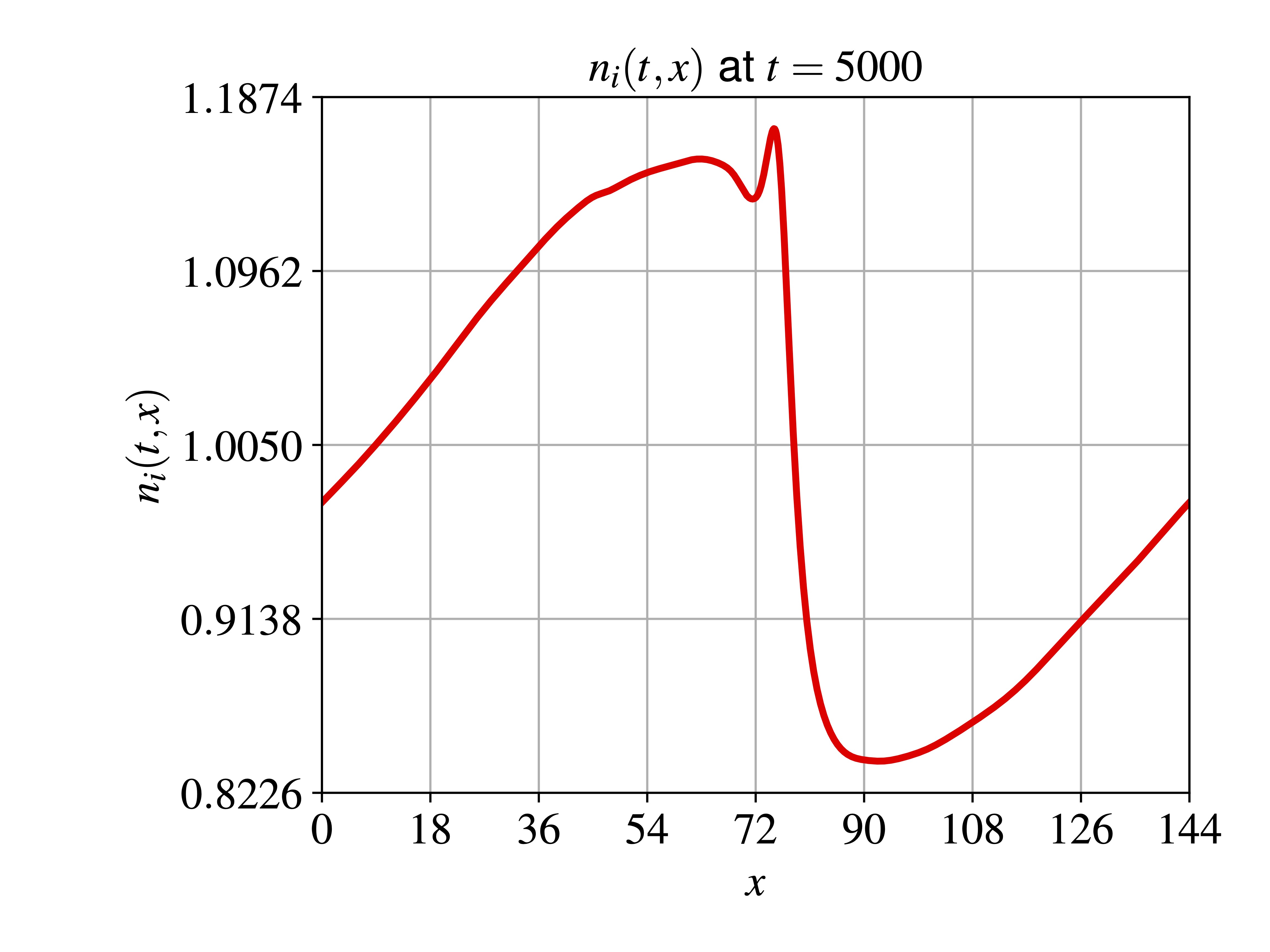} &
   (b)\includegraphics[width=0.44\textwidth]{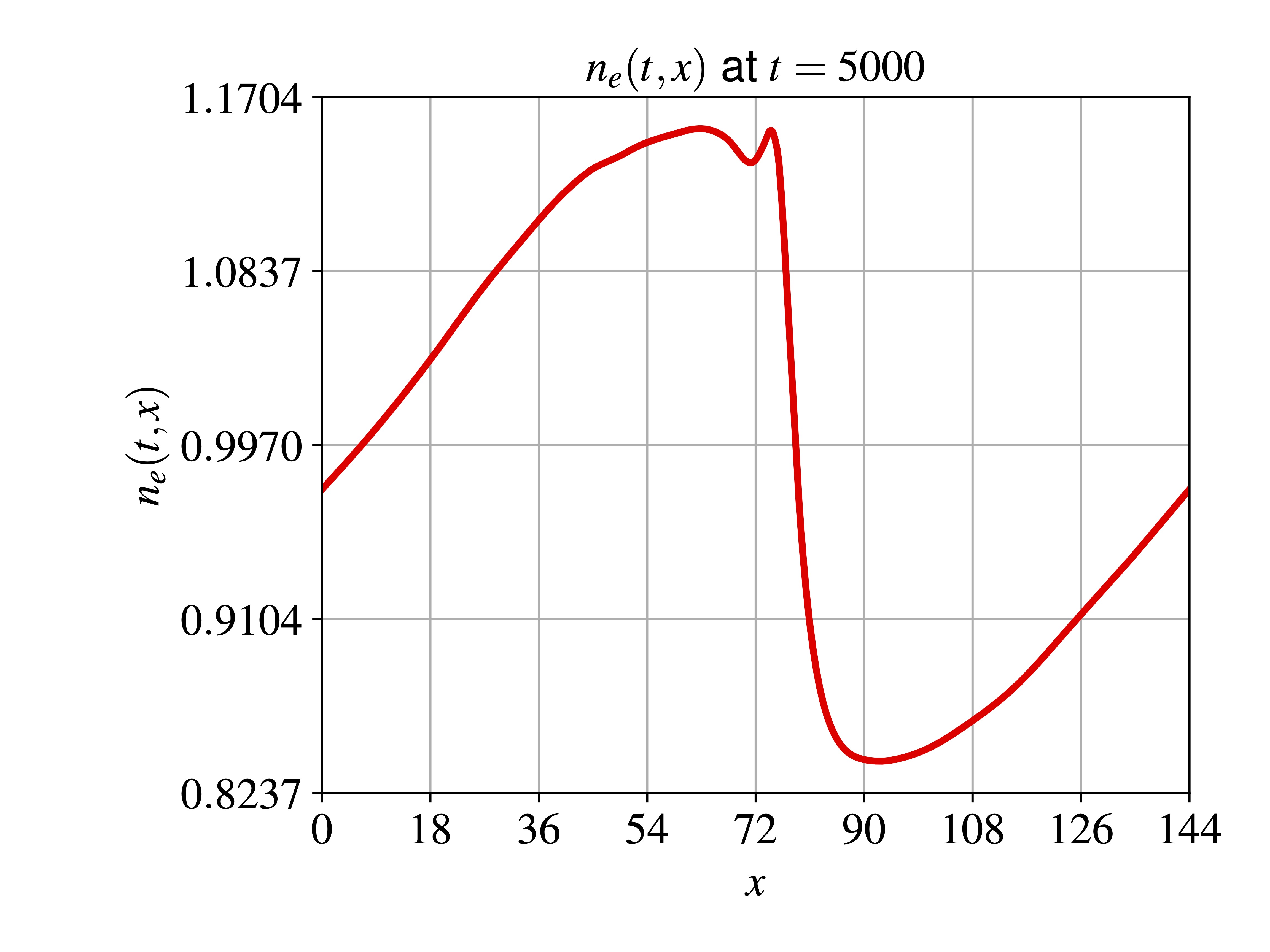} \\
   (c)\includegraphics[width=0.44\textwidth]{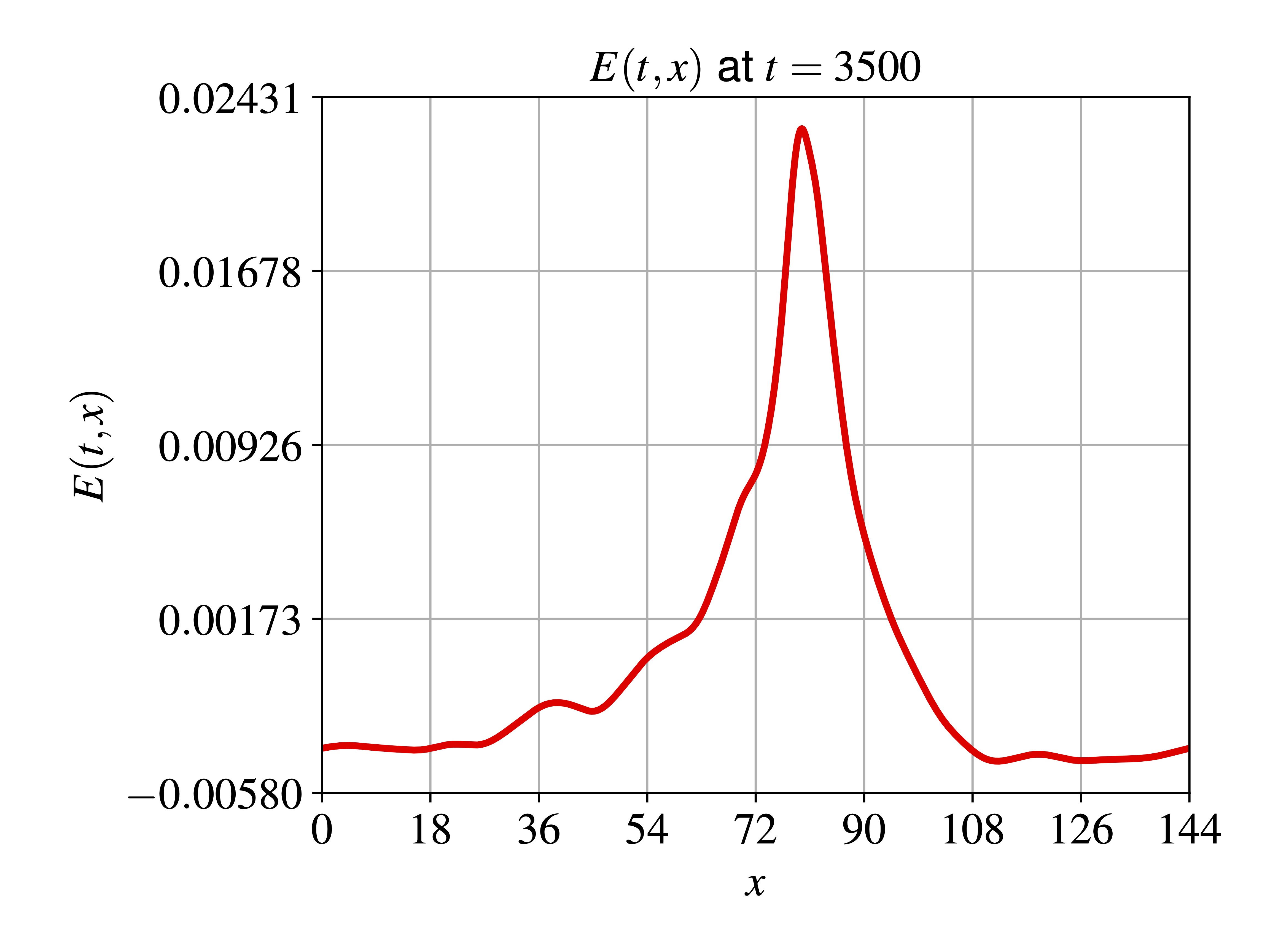} &
   (d)\includegraphics[width=0.44\textwidth]{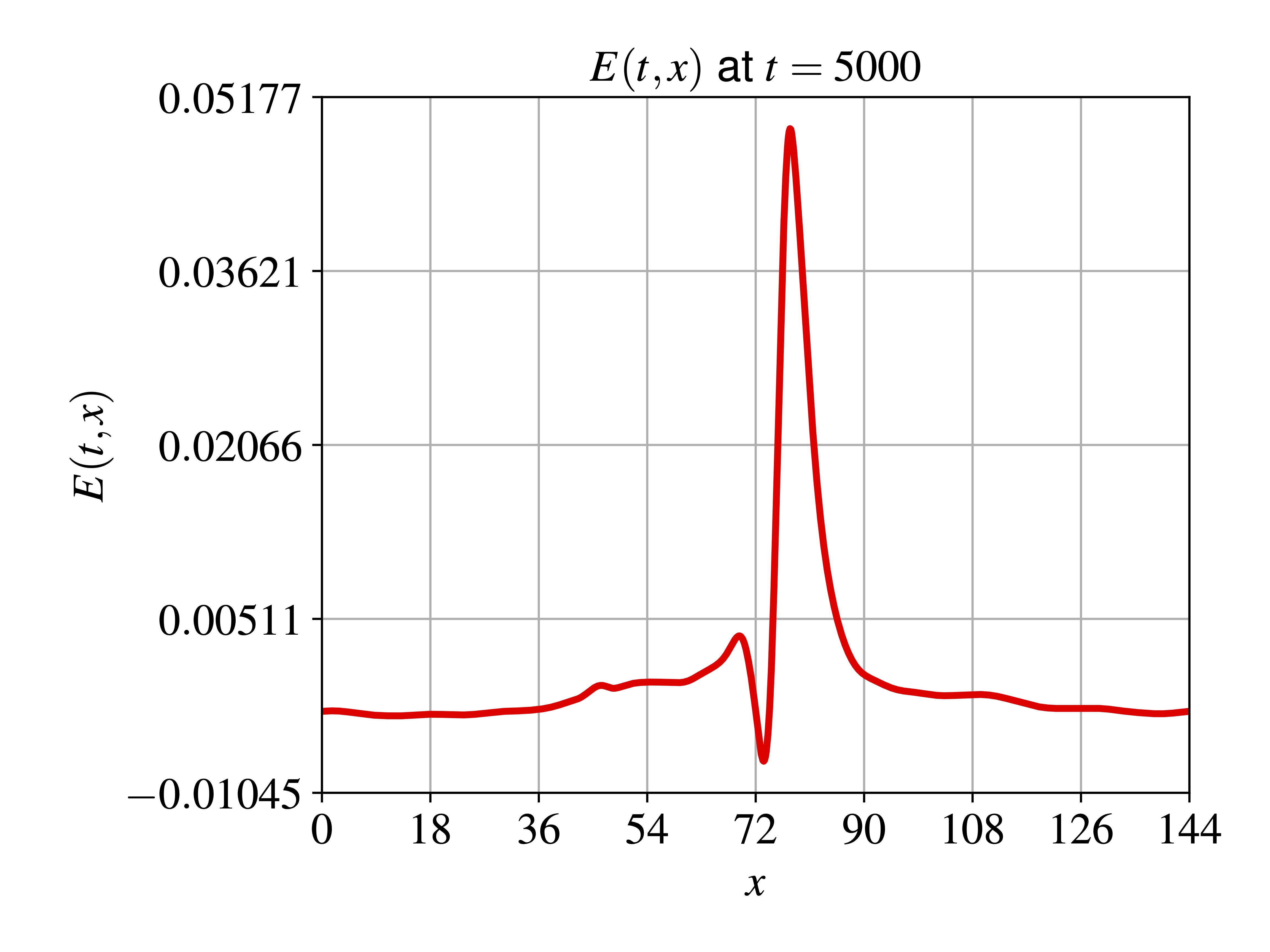} \\
   (e)\includegraphics[width=0.44\textwidth]{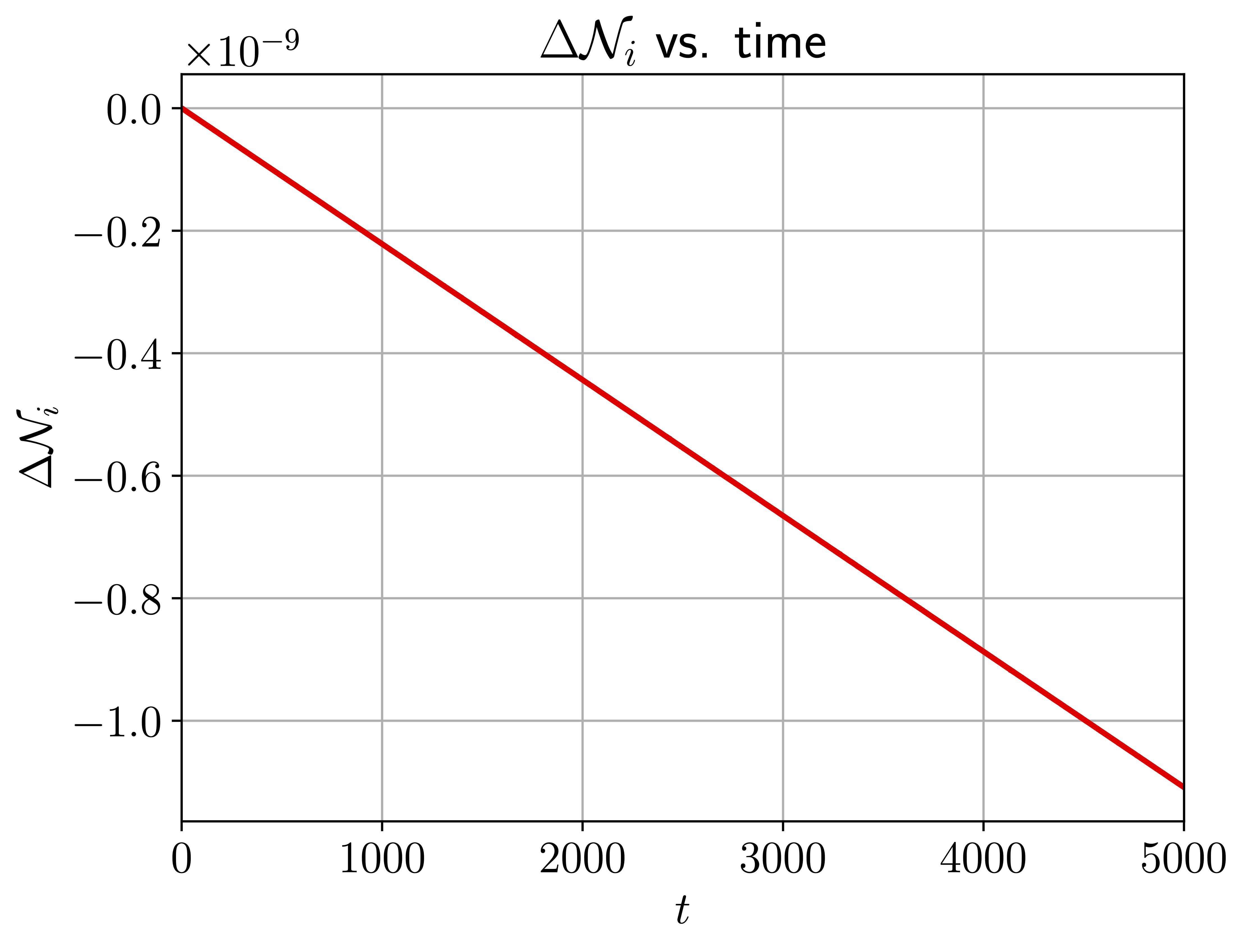} &
   (f)\includegraphics[width=0.44\textwidth]{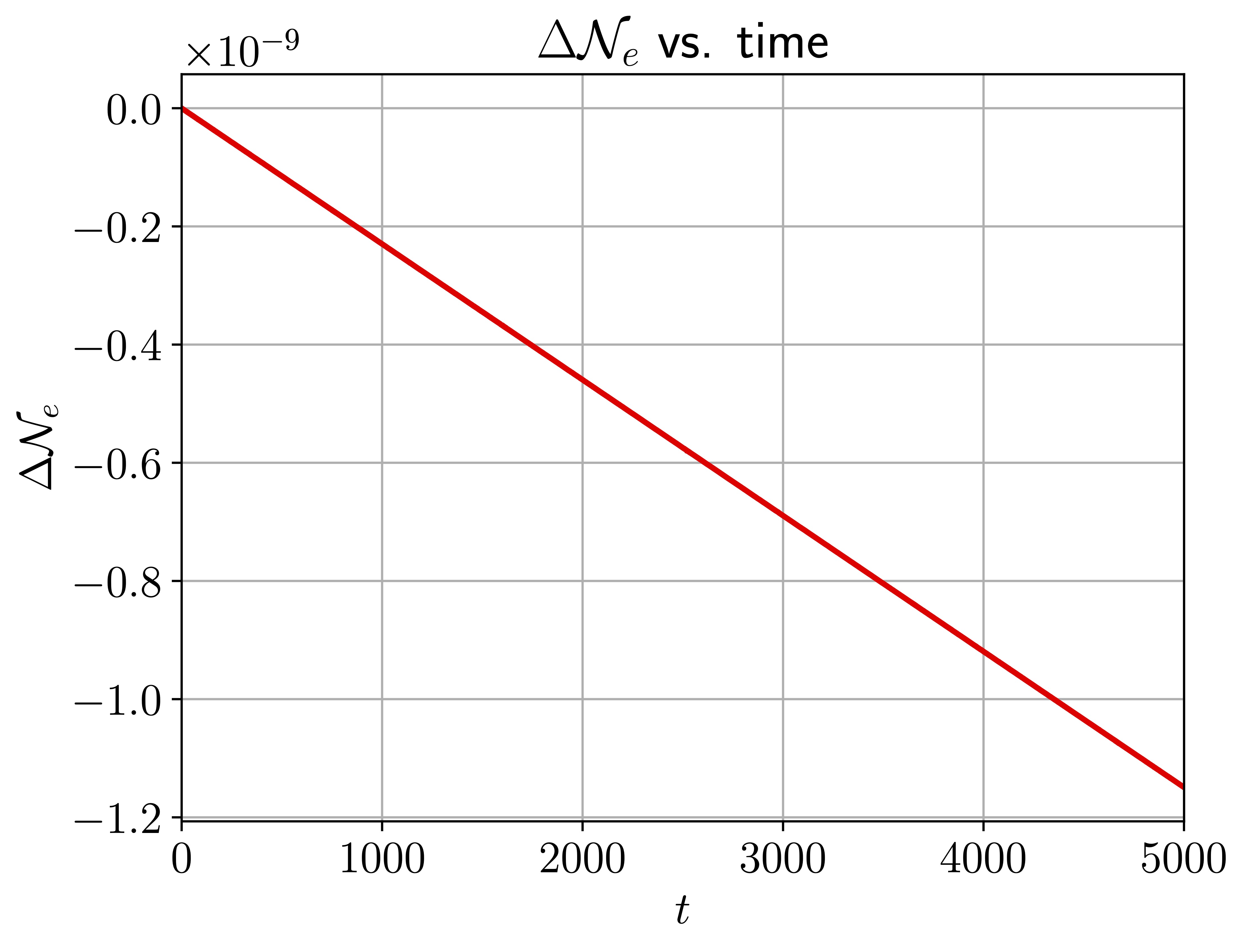}
\end{tabular}
    \caption{(\S\ref{subsec:iasw}: Ion-acoustic shock) 
    Shown in these panels are the (a) ion number density at $t\!=\!5000$, (b) electron number density at $t\!=\!5000$, (c) 
    electric field at $t\!=\!3500$, (d) electric field at $t\!=\!5000$, (e) time history of the ion particle number drift over $t\in[0,5000]$, and (f) time history of the electron particle number drift over $t\in[0,5000]$. The results show the formation of steep electric field gradients associated with the ion-acoustic shock.}
\label{fig:ias-density}
\end{center}
\end{figure}

\subsubsection{Kinetic electrostatic electron nonlinear (KEEN) wave}
\label{subsec:keen}
Kinetic electrostatic electron nonlinear (KEEN) waves serve as a demanding benchmark for assessing the accuracy of numerical methods over long time scales in nonlinear plasma simulations (e.g., see \cite{AfeyanEtAl2014}, \cite{WilhelmEtAl2026}). A traveling ponderomotive drive, representing the forcing from crossing laser beams, induces electron trapping and interactions among wave harmonics. The resulting fine phase space structures require high resolution over long times, while dynamical ions introduce a slower response that modifies the electron evolution.

The initial distribution for each species is a zero-drift Maxwellian, with zero initial electric field:
\begin{equation}\label{eq:keen_ic}
f_s(t\!=\!0,x,v)=\sqrt{\frac{m_s}{2\pi T_s}}\,
\exp\!\left(-\frac{m_s v^2}{2T_s}\right),
\qquad E(t\!=\!0,x)=0.
\end{equation}
Adopting the mass and temperature ratios of the dynamical ion setup in \cite{WilhelmEtAl2026}, and using our charge convention, we choose
\begin{equation}\label{eq:keen_params}
m_i=1836,\quad T_i=1,\quad q_i=1,\qquad
m_e=1,\quad T_e=1,\quad q_e=-1,
\end{equation}
corresponding to $r_m=1836$ and $r_T=1$. The prescribed drive field represents the laser-induced ponderomotive forcing:
\begin{gather}\label{eq:keen_drive}
E_{\mathrm{pond}}(x,t)
=a_{\mathrm{Dr}}k_{\mathrm{Dr}}a(t)
\sin\!\left(k_{\mathrm{Dr}}x-\omega_{\mathrm{Dr}}t\right), \quad
a(t)=\frac{g(t)-g(0)}{1-g(0)}, \\ \text{and} \quad
g(t)=\frac{1}{2}\left[
\tanh\!\left(\frac{t-t_L}{t_{wL}}\right)
-\tanh\!\left(\frac{t-t_R}{t_{wR}}\right)\right].
\end{gather}
The drive parameters are
\begin{equation}\label{eq:keen_drive_params}
a_{\mathrm{Dr}}=0.2,\quad
k_{\mathrm{Dr}}=0.26,\quad
\omega_{\mathrm{Dr}}=0.37,\quad
T_{\mathrm{Dr}}=100,\quad
t_L=69,\quad
t_{wL}=t_{wR}=20,\quad
t_R=307.
\end{equation}

For this model, we incorporate the ponderomotive forcing into the existing multi-species model by replacing $E$ with $\widetilde{E}$ in the force terms of the Vlasov equations in \eqref{eq:va_two_species}, where the effective field is defined by
\begin{equation}\label{eq:keen_effective_field}
\widetilde{E}=E-E_{\mathrm{pond}}.
\end{equation}

The existing splitting formulation accommodates this forcing without changing the splitting structure or introducing additional subproblems. 
To accommodate the separation between electron and ion velocity scales, the two species are evolved on different velocity intervals. We consider
the KEEN wave simulation with dynamically evolving ions on the computational domains $(t,x,v)\in[0,2000]\times[0,L]\times I_s$, with periodic boundary conditions in $x$, where $L=24.1$, $I_e=[-6,\,6]$, and $I_i=[-0.140028,\,0.140028]$. This setup allows electron trapping and phase space filamentation to be studied alongside the slower ion response and its influence on the long-time wave evolution. The simulation is ran to a final time of $t=2000$ using fourth-order operator-splitting with the $\morder\!=\!4$ SLDG method, $N_x\times N_v\!=\!512\times256$ elements, and $\mathrm{CFL}\!=\!20$.%

To track the development of spatial density variations, we compute the first five Fourier harmonics of each species. The amplitude of the
$m$th harmonic is defined by
\begin{equation}
\label{eq:keen_density_harmonics}
    \left|\widehat{n}_{s,m}(t)\right|
    :=
    \left|
    \frac{1}{L}\int_0^L n_s(t,x)
    e^{-\mathrm{i}m k_{\mathrm{Dr}}x}\,dx
    \right|,
    \quad m=1,\ldots,5,
\end{equation}
where $n_s(t,x)$ is the species number density
and $k_{\mathrm{Dr}}=2\pi/L$ is the fundamental wavenumber. 

Figure~\ref{fig:keen-wave-dist} shows the electron distribution deviation $f_e(t,x,v)-f_e(0,x,v)$ at varying times; Panel (a) depicts the pronounced filamented structures formed at $t=300$, and Panel (b), on the other hand, shows a coherent vortex at $t=1000$. These features are consistent with electron trapping and the nonlinear structures described in \cite{AfeyanEtAl2014} and \cite{WilhelmEtAl2026}. 
The ion and electron density harmonics are shown in Panels (c) and (d) of Figure~\ref{fig:keen-wave-dist}, respectively. Panels (c) and (d) show initial growth and oscillations of the density harmonics, followed by persistent, approximately steady amplitudes. 

These figures can be reproduced by running the commands below \cite{dogpack}:

\begin{tcolorbox}
\begin{verbatim}
cd $DOGPACK/apps/2d/keen_wave/dynamic_ion_test/; make; dog.exe; 
make plotfull; make plotmodes;
\end{verbatim}
\end{tcolorbox}

\begin{figure}[!th]
\begin{center}
\begin{tabular}{cc}
   (a)\includegraphics[width=0.44\textwidth]{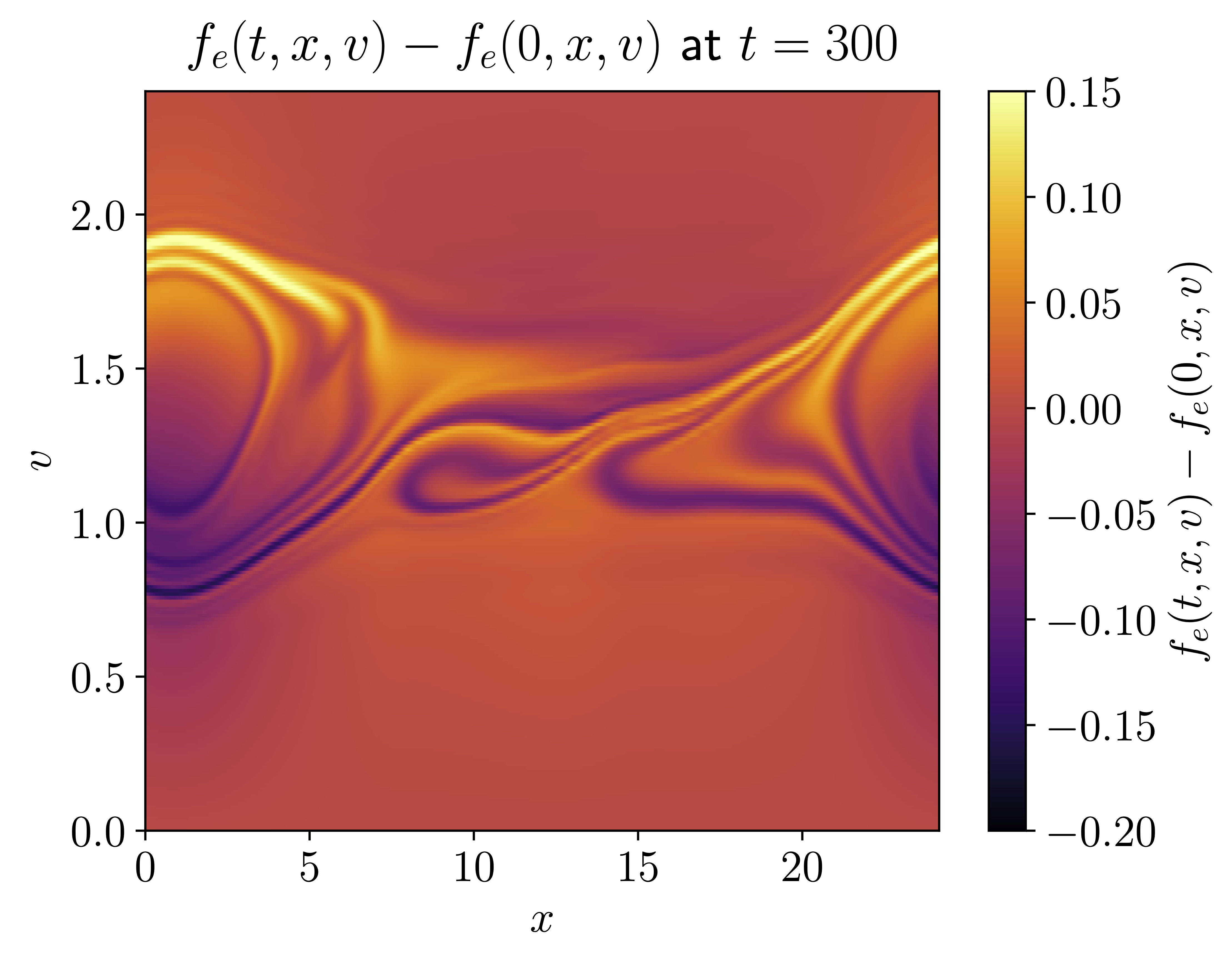}&
   (b)\includegraphics[width=0.44\textwidth]{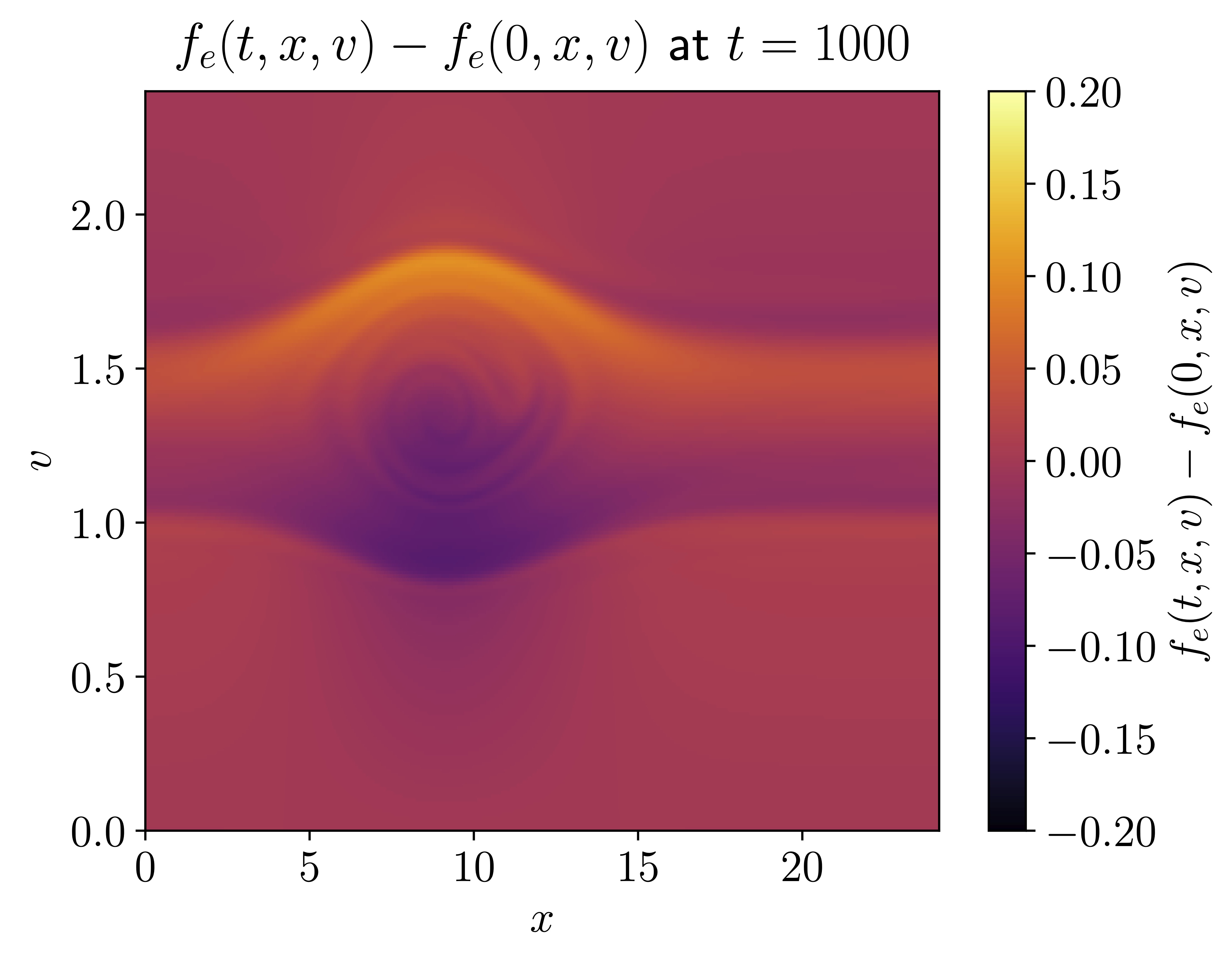} \\
   (c)\includegraphics[width=0.44\textwidth]{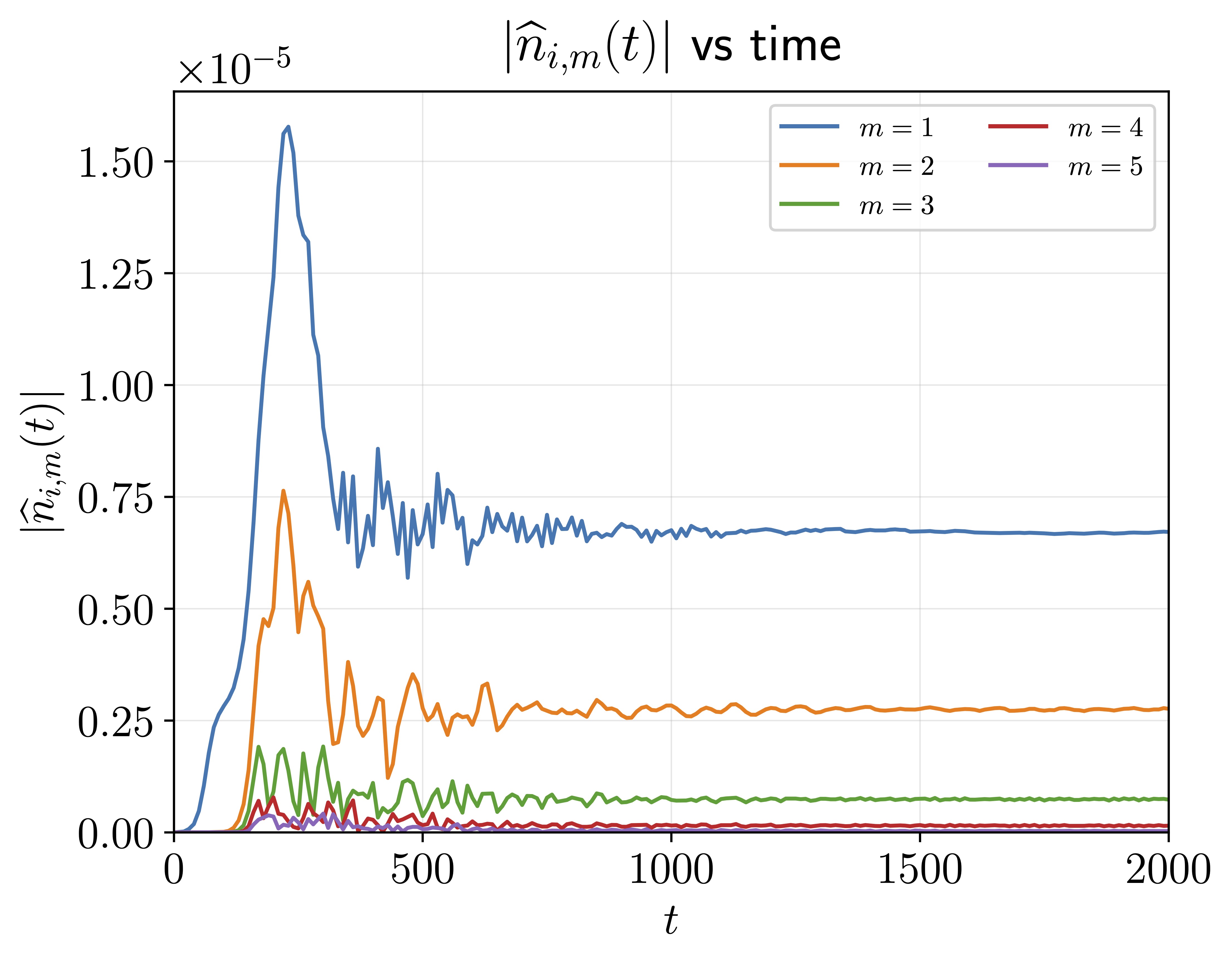}&
   (d)\includegraphics[width=0.44\textwidth]{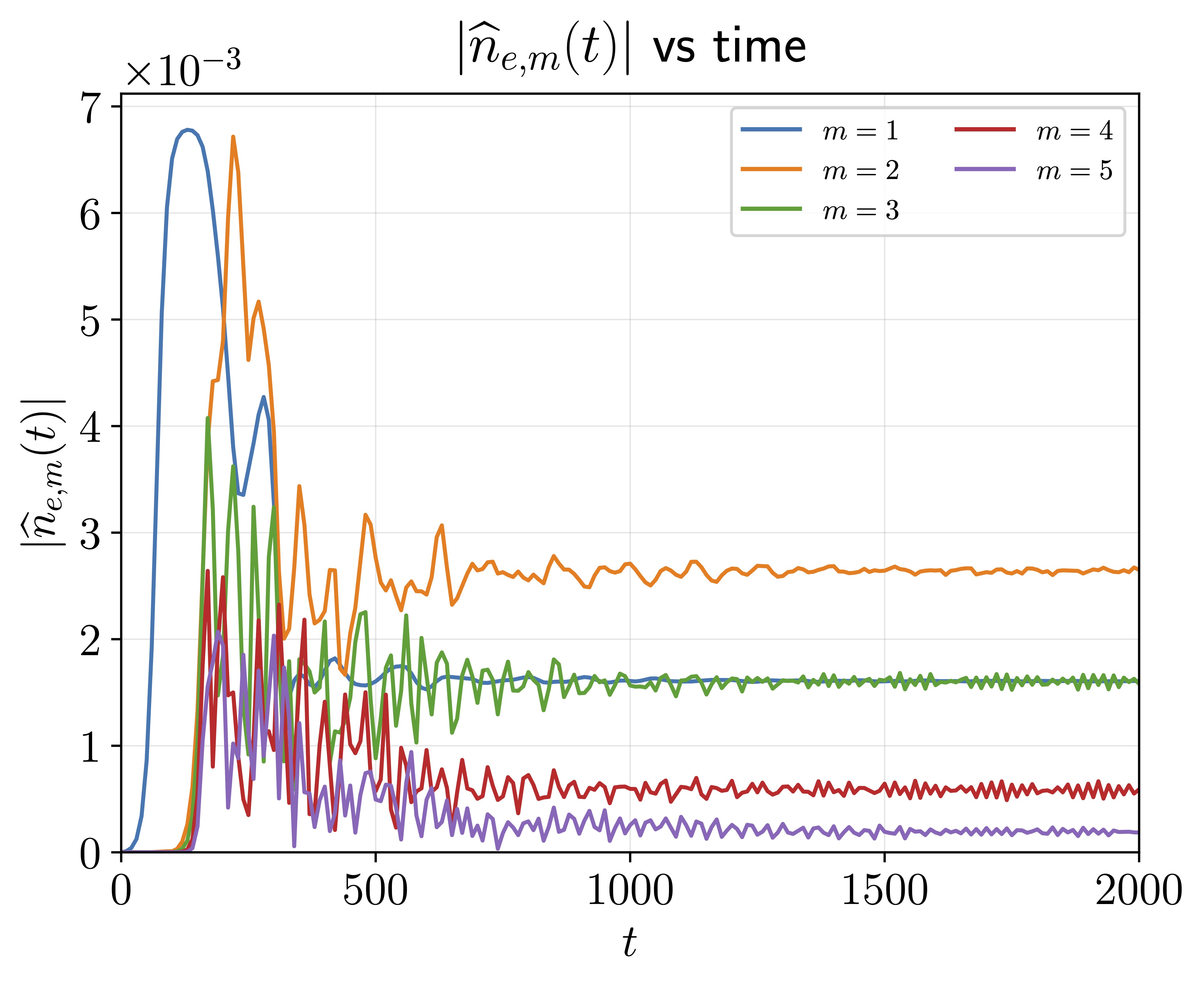}
   \end{tabular}
    \caption{(\S\ref{subsec:keen}: KEEN wave): Phase space plots of deviation of electron distribution $f_e(t,x,v)-f_e(0,x,v)$ at (a) $t=300$ and (b) $t=1000$. 
    Also shown are time series of the first five spatial Fourier modes over $t\in[0,2000]$ of the (c) ion number density and (d) electron number density. 
    The simulation uses fourth-order operator splitting and the $\morder\!=\!4$ SLDG method, $N_x\times N_v\!=\!512\times256$ elements, and $\mathrm{CFL}\!=\!20$.}
\label{fig:keen-wave-dist}
\end{center}
\end{figure}

\section{Conclusion}
\label{sec:conclusion}
In this work, we proposed a high-order semi-Lagrangian discontinuous Galerkin framework for the multi-species 1D1V Vlasov-Amp\`ere system. The scheme is mass-conserving, positivity-preserving, and unconditionally stable. The method makes use of high-order operator splitting coupled to semi-Lagrangian DG transport solvers for the free-streaming and acceleration substeps. The electric field is computed from exaclty solving a harmonic oscillator equation in the acceleration substep. For improved efficiency, each species is discretized on a different phase-space mesh.

Manufactured solution tests were used to verify the designed order of accuracy in both the
single- and two-species settings. In addition, the method was tested on a suite of standard plasma
benchmarks. For the single-species case, we considered the two-stream instability and both weak and strong Landau damping, while for the two-species case we studied the ion-acoustic wave, ion-acoustic shock, and the KEEN wave. Collectively, these numerical experiments confirmed long-time robustness in regimes with strong ion-electron scale separation and demonstrated that the method preserves species-wise particle number and positivity. While not exactly momentum or total energy conserving, the scheme is shown to preserve these quantities to relatively high accuracy.

Future work will extend the present framework to high-dimensional Vlasov-Amp\`ere systems. We
also plan to adapt the methodology to the nonrelativistic and relativistic Vlasov-Maxwell
equations, while preserving the same accuracy, efficiency, and robustness properties. %

\printcredits

\section*{Statements and Declarations}
\begin{description}
\item[{\bf Funding.}] This research was partially funded by US National Science Foundation Grant DMS--2410538. DG was supported by Simons Foundation International Grant SFI-MPS-SDF-00026661.

\medskip

\item[{\bf Competing interests.}] The authors have no
conflicts of interest to disclose.

\medskip

\item[{\bf Data availability statement.}] Data sharing does not apply to this article as no datasets were generated or analyzed during the current study.
\end{description}

\bibliographystyle{cas-model2-names}

\clearpage

\begin{thebibliography}{66}
\expandafter\ifx\csname natexlab\endcsname\relax\def\natexlab#1{#1}\fi
\providecommand{\url}[1]{\texttt{#1}}
\providecommand{\href}[2]{#2}
\providecommand{\path}[1]{#1}
\providecommand{\DOIprefix}{doi:}
\providecommand{\ArXivprefix}{arXiv:}
\providecommand{\URLprefix}{URL: }
\providecommand{\Pubmedprefix}{pmid:}
\providecommand{\doi}[1]{\href{http://dx.doi.org/#1}{\path{#1}}}
\providecommand{\Pubmed}[1]{\href{pmid:#1}{\path{#1}}}
\providecommand{\bibinfo}[2]{#2}
\ifx\xfnm\relax \def\xfnm[#1]{\unskip,\space#1}\fi
%
\bibitem[{Afeyan et~al.(2014)Afeyan, Casas, Crouseilles, Dodhy, Faou,
  Mehrenberger and Sonnendr{\"u}cker}]{AfeyanEtAl2014}
\bibinfo{author}{Afeyan, B.}, \bibinfo{author}{Casas, F.},
  \bibinfo{author}{Crouseilles, N.}, \bibinfo{author}{Dodhy, A.},
  \bibinfo{author}{Faou, E.}, \bibinfo{author}{Mehrenberger, M.},
  \bibinfo{author}{Sonnendr{\"u}cker, E.}, \bibinfo{year}{2014}.
\newblock \bibinfo{title}{Simulations of kinetic electrostatic electron
  nonlinear ({KEEN}) waves with variable velocity resolution grids and
  high-order time-splitting}.
\newblock \bibinfo{journal}{The European Physical Journal D}
  \bibinfo{volume}{68}, \bibinfo{pages}{295}.
\newblock \DOIprefix\doi{10.1140/epjd/e2014-50212-6}.
%
\bibitem[{Banks and Hittinger(2010)}]{BanksHittinger2010}
\bibinfo{author}{Banks, J.W.}, \bibinfo{author}{Hittinger, J.A.F.},
  \bibinfo{year}{2010}.
\newblock \bibinfo{title}{A new class of nonlinear finite-volume methods for
  {V}lasov simulation}.
\newblock \bibinfo{journal}{IEEE Transactions on Plasma Science}
  \bibinfo{volume}{38}, \bibinfo{pages}{2198--2207}.
\newblock \DOIprefix\doi{10.1109/TPS.2010.2056937}.
%
\bibitem[{Banks et~al.(2019)Banks, Odu, Berger, Chapman, Arrighi and
  Brunner}]{BanksOduBergerChapmanArrighiBrunner2019}
\bibinfo{author}{Banks, J.W.}, \bibinfo{author}{Odu, A.G.},
  \bibinfo{author}{Berger, R.}, \bibinfo{author}{Chapman, T.},
  \bibinfo{author}{Arrighi, W.}, \bibinfo{author}{Brunner, S.},
  \bibinfo{year}{2019}.
\newblock \bibinfo{title}{High-order accurate conservative finite difference
  methods for {V}lasov equations in {2D+2V}}.
\newblock \bibinfo{journal}{SIAM Journal on Scientific Computing}
  \bibinfo{volume}{41}, \bibinfo{pages}{B953--B982}.
\newblock \DOIprefix\doi{doi.org/10.1137/19M1238551}.
%
\bibitem[{Besse and Mehrenberger(2008)}]{BesseMehrenberger2008}
\bibinfo{author}{Besse, N.}, \bibinfo{author}{Mehrenberger, M.},
  \bibinfo{year}{2008}.
\newblock \bibinfo{title}{Convergence of classes of high-order
  semi-{L}agrangian schemes for the {V}lasov--{P}oisson system}.
\newblock \bibinfo{journal}{Mathematics of Computation} \bibinfo{volume}{77},
  \bibinfo{pages}{93--123}.
\newblock \DOIprefix\doi{10.1090/S0025-5718-07-01912-6}.
%
\bibitem[{Birdsall et~al.(1991)Birdsall, Langdon and
  Langdon}]{BirdsallLangdon2004}
\bibinfo{author}{Birdsall, C.K.}, \bibinfo{author}{Langdon, A.B.},
  \bibinfo{author}{Langdon, A.B.}, \bibinfo{year}{1991}.
\newblock \bibinfo{title}{Plasma Physics via Computer Simulation}.
\newblock \bibinfo{edition}{1st} ed., \bibinfo{publisher}{CRC Press}.
\newblock \DOIprefix\doi{doi.org/10.1201/9781315275048}.
%
\bibitem[{Blanes and Moan(2002)}]{BlanesMoan2002}
\bibinfo{author}{Blanes, S.}, \bibinfo{author}{Moan, P.C.},
  \bibinfo{year}{2002}.
\newblock \bibinfo{title}{Practical symplectic partitioned {R}unge--{K}utta and
  {R}unge--{K}utta--{N}ystr{\"o}m methods}.
\newblock \bibinfo{journal}{Journal of Computational and Applied Mathematics}
  \bibinfo{volume}{142}, \bibinfo{pages}{313--330}.
\newblock \DOIprefix\doi{10.1016/S0377-0427(01)00492-7}.
%
\bibitem[{Cai et~al.(2021)Cai, Boscarino and Qiu}]{CaiBoscarinoQiu2021}
\bibinfo{author}{Cai, X.}, \bibinfo{author}{Boscarino, S.},
  \bibinfo{author}{Qiu, J.M.}, \bibinfo{year}{2021}.
\newblock \bibinfo{title}{High order semi-{L}agrangian discontinuous {G}alerkin
  method coupled with {R}unge--{K}utta exponential integrators for nonlinear
  {V}lasov dynamics}.
\newblock \bibinfo{journal}{Journal of Computational Physics}
  \bibinfo{volume}{427}, \bibinfo{pages}{110036}.
\newblock \DOIprefix\doi{10.1016/j.jcp.2020.110036}.
%
\bibitem[{Cai et~al.(2018)Cai, Guo and Qiu}]{CaiGuoQiu2018}
\bibinfo{author}{Cai, X.}, \bibinfo{author}{Guo, W.}, \bibinfo{author}{Qiu,
  J.M.}, \bibinfo{year}{2018}.
\newblock \bibinfo{title}{A high order semi-{L}agrangian discontinuous
  {G}alerkin method for {V}lasov--{P}oisson simulations without operator
  splitting}.
\newblock \bibinfo{journal}{Journal of Computational Physics}
  \bibinfo{volume}{354}, \bibinfo{pages}{529--551}.
\newblock \DOIprefix\doi{doi.org/10.1016/j.jcp.2017.10.048}.
%
\bibitem[{Cai et~al.(2019)Cai, Guo and Qiu}]{CaiGuoQiu2019}
\bibinfo{author}{Cai, X.}, \bibinfo{author}{Guo, W.}, \bibinfo{author}{Qiu,
  J.M.}, \bibinfo{year}{2019}.
\newblock \bibinfo{title}{A high order semi-{L}agrangian discontinuous
  {G}alerkin method for the two-dimensional incompressible {E}uler equations
  and the guiding center {V}lasov model without operator splitting}.
\newblock \bibinfo{journal}{Journal of Scientific Computing}
  \bibinfo{volume}{79}, \bibinfo{pages}{1111--1134}.
\newblock \DOIprefix\doi{10.1007/s10915-018-0889-1}.
%
\bibitem[{Chen et~al.(2011)Chen, Chac{\'o}n and Barnes}]{ChenChaconBarnes2011}
\bibinfo{author}{Chen, G.}, \bibinfo{author}{Chac{\'o}n, L.},
  \bibinfo{author}{Barnes, D.C.}, \bibinfo{year}{2011}.
\newblock \bibinfo{title}{An energy- and charge-conserving, implicit,
  electrostatic particle-in-cell algorithm}.
\newblock \bibinfo{journal}{Journal of Computational Physics}
  \bibinfo{volume}{230}, \bibinfo{pages}{7018--7036}.
\newblock \DOIprefix\doi{10.1016/j.jcp.2011.05.031}.
%
\bibitem[{Cheng and Knorr(1976)}]{ChengKnorr1976}
\bibinfo{author}{Cheng, C.Z.}, \bibinfo{author}{Knorr, G.},
  \bibinfo{year}{1976}.
\newblock \bibinfo{title}{The integration of the {V}lasov equation in
  configuration space}.
\newblock \bibinfo{journal}{Journal of Computational Physics}
  \bibinfo{volume}{22}, \bibinfo{pages}{330--351}.
\newblock \DOIprefix\doi{doi.org/10.1016/0021-9991(76)90053-X}.
%
\bibitem[{Cheng et~al.(2015)Cheng, Christlieb and
  Zhong}]{ChengChristliebZhong2015_TwoSpeciesVA}
\bibinfo{author}{Cheng, Y.}, \bibinfo{author}{Christlieb, A.J.},
  \bibinfo{author}{Zhong, X.}, \bibinfo{year}{2015}.
\newblock \bibinfo{title}{Numerical study of the two-species
  {V}lasov--{A}mp{\`e}re system: Energy-conserving schemes and the
  current-driven ion-acoustic instability}.
\newblock \bibinfo{journal}{Journal of Computational Physics}
  \bibinfo{volume}{288}, \bibinfo{pages}{66--85}.
\newblock \DOIprefix\doi{10.1016/j.jcp.2015.02.020}.
%
\bibitem[{Cockburn and Shu(1998)}]{CockburnShu1998_JCP_V}
\bibinfo{author}{Cockburn, B.}, \bibinfo{author}{Shu, C.W.},
  \bibinfo{year}{1998}.
\newblock \bibinfo{title}{The {R}unge--{K}utta discontinuous {G}alerkin method
  for conservation laws {V}: Multidimensional systems}.
\newblock \bibinfo{journal}{Journal of Computational Physics}
  \bibinfo{volume}{141}, \bibinfo{pages}{199--224}.
\newblock \DOIprefix\doi{10.1006/jcph.1998.5892}.
%
\bibitem[{Courant et~al.(1928)Courant, Friedrichs and Lewy}]{article:CFL1928}
\bibinfo{author}{Courant, R.}, \bibinfo{author}{Friedrichs, K.},
  \bibinfo{author}{Lewy, H.}, \bibinfo{year}{1928}.
\newblock \bibinfo{title}{{\"U}ber die partiellen {D}ifferenzengleichungen der
  mathematischen {P}hysik}.
\newblock \bibinfo{journal}{Mathematische Annalen} \bibinfo{volume}{100},
  \bibinfo{pages}{32--74}.
\newblock \DOIprefix\doi{10.1007/BF01448839}.
%
\bibitem[{Crouseilles et~al.(2011)Crouseilles, Faou and
  Mehrenberger}]{article:Crouseilles2011}
\bibinfo{author}{Crouseilles, N.}, \bibinfo{author}{Faou, E.},
  \bibinfo{author}{Mehrenberger, M.}, \bibinfo{year}{2011}.
\newblock \bibinfo{title}{High order {R}unge-{K}utta-{N}ystrom splitting
  methods for the {V}lasov-{P}oisson equation}.
\newblock \URLprefix
  \url{https://www.i2m.univ-amu.fr/perso/mehrenberg.m/cfm.pdf}.
%
\bibitem[{Crouseilles et~al.(2014)Crouseilles, Glanc, Hirstoaga, Madaule,
  Mehrenberger and
  P{\'e}tri}]{CrouseillesGlancHirstoagaMadauleMehrenbergerPetri2014}
\bibinfo{author}{Crouseilles, N.}, \bibinfo{author}{Glanc, P.},
  \bibinfo{author}{Hirstoaga, S.}, \bibinfo{author}{Madaule, E.},
  \bibinfo{author}{Mehrenberger, M.}, \bibinfo{author}{P{\'e}tri, J.},
  \bibinfo{year}{2014}.
\newblock \bibinfo{title}{A new fully two-dimensional conservative
  semi-{L}agrangian method: {A}pplications on polar grids, from diocotron
  instability to {ITG} turbulence}.
\newblock \bibinfo{journal}{European Physical Journal D} \bibinfo{volume}{68},
  \bibinfo{pages}{1--10}.
\newblock \DOIprefix\doi{10.1140/epjd/e2014-50180-9}.
%
\bibitem[{Crouseilles et~al.(2025)Crouseilles, Liu and
  Yue}]{CrouseillesLiuYue2025}
\bibinfo{author}{Crouseilles, N.}, \bibinfo{author}{Liu, H.},
  \bibinfo{author}{Yue, Y.}, \bibinfo{year}{2025}.
\newblock \bibinfo{title}{Semi-lagrangian {SAV} method for {V}lasov--{M}axwell
  equations}.
\newblock \bibinfo{journal}{Journal of Computational Physics}
  \bibinfo{volume}{549}, \bibinfo{pages}{114606}.
\newblock \DOIprefix\doi{10.1016/j.jcp.2025.114606}.
%
\bibitem[{Crouseilles et~al.(2010)Crouseilles, Mehrenberger and
  Sonnendr{\"u}cker}]{CrouseillesMehrenbergerSonnendrucker2010}
\bibinfo{author}{Crouseilles, N.}, \bibinfo{author}{Mehrenberger, M.},
  \bibinfo{author}{Sonnendr{\"u}cker, E.}, \bibinfo{year}{2010}.
\newblock \bibinfo{title}{Conservative semi-{L}agrangian schemes for {V}lasov
  equations}.
\newblock \bibinfo{journal}{Journal of Computational Physics}
  \bibinfo{volume}{229}, \bibinfo{pages}{1927--1953}.
\newblock \DOIprefix\doi{10.1016/j.jcp.2009.11.007}.
%
\bibitem[{Despr{\'e}s(2008)}]{Despres2008}
\bibinfo{author}{Despr{\'e}s, B.}, \bibinfo{year}{2008}.
\newblock \bibinfo{title}{Stability of high order finite volume schemes for the
  {1D} transport equation}, in: \bibinfo{booktitle}{Finite Volumes for Complex
  Applications V}. \bibinfo{publisher}{ISTE}, \bibinfo{address}{London}, pp.
  \bibinfo{pages}{337--342}.
\newblock \DOIprefix\doi{10.1016/j.matcom.2025.02.017}.
%
\bibitem[{Einkemmer and Joseph(2021)}]{EinkemmerJoseph2021}
\bibinfo{author}{Einkemmer, L.}, \bibinfo{author}{Joseph, I.},
  \bibinfo{year}{2021}.
\newblock \bibinfo{title}{A mass, momentum, and energy conservative dynamical
  low-rank scheme for the {V}lasov equation}.
\newblock \bibinfo{journal}{Journal of Computational Physics}
  \bibinfo{volume}{443}, \bibinfo{pages}{110495}.
\newblock \DOIprefix\doi{10.1016/j.jcp.2021.110495}.
%
\bibitem[{Einkemmer and Lubich(2018)}]{EinkemmerLubich2018}
\bibinfo{author}{Einkemmer, L.}, \bibinfo{author}{Lubich, C.},
  \bibinfo{year}{2018}.
\newblock \bibinfo{title}{A low-rank projector-splitting integrator for the
  {V}lasov--{P}oisson equation}.
\newblock \bibinfo{journal}{SIAM Journal on Scientific Computing}
  \bibinfo{volume}{40}, \bibinfo{pages}{B1330--B1360}.
\newblock \DOIprefix\doi{10.1137/18M116383X}.
%
\bibitem[{Einkemmer and Lubich(2019)}]{EinkemmerLubich2019}
\bibinfo{author}{Einkemmer, L.}, \bibinfo{author}{Lubich, C.},
  \bibinfo{year}{2019}.
\newblock \bibinfo{title}{A quasi-conservative dynamical low-rank algorithm for
  the {V}lasov equation}.
\newblock \bibinfo{journal}{SIAM Journal on Scientific Computing}
  \bibinfo{volume}{41}, \bibinfo{pages}{B1061--B1081}.
\newblock \DOIprefix\doi{10.1137/18M1218686}.
%
\bibitem[{Einkemmer et~al.(2020)Einkemmer, Ostermann and
  Piazzola}]{EinkemmerOstermannPiazzola2020}
\bibinfo{author}{Einkemmer, L.}, \bibinfo{author}{Ostermann, A.},
  \bibinfo{author}{Piazzola, C.}, \bibinfo{year}{2020}.
\newblock \bibinfo{title}{A low-rank projector-splitting integrator for the
  {V}lasov--{M}axwell equations with divergence correction}.
\newblock \bibinfo{journal}{Journal of Computational Physics}
  \bibinfo{volume}{403}, \bibinfo{pages}{109063}.
\newblock \DOIprefix\doi{10.1016/j.jcp.2019.109063}.
%
\bibitem[{Filbet and Sonnendr{\"u}cker(2003)}]{FilbetSonnendrucker2003}
\bibinfo{author}{Filbet, F.}, \bibinfo{author}{Sonnendr{\"u}cker, E.},
  \bibinfo{year}{2003}.
\newblock \bibinfo{title}{Comparison of {E}ulerian {V}lasov solvers}.
\newblock \bibinfo{journal}{Computer Physics Communications}
  \bibinfo{volume}{150}, \bibinfo{pages}{247--266}.
\newblock \DOIprefix\doi{10.1016/S0010-4655(02)00694-X}.
%
\bibitem[{Filbet et~al.(2001)Filbet, Sonnendr{\"u}cker and
  Bertrand}]{FilbetSonnendruckerBertrand2001}
\bibinfo{author}{Filbet, F.}, \bibinfo{author}{Sonnendr{\"u}cker, E.},
  \bibinfo{author}{Bertrand, P.}, \bibinfo{year}{2001}.
\newblock \bibinfo{title}{Conservative numerical schemes for the {V}lasov
  equation}.
\newblock \bibinfo{journal}{Journal of Computational Physics}
  \bibinfo{volume}{172}, \bibinfo{pages}{166--187}.
\newblock \DOIprefix\doi{10.1006/jcph.2001.6818}.
%
\bibitem[{Finn et~al.(2023)Finn, Knepley, Pusztay and Adams}]{article:Finn2023}
\bibinfo{author}{Finn, D.S.}, \bibinfo{author}{Knepley, M.G.},
  \bibinfo{author}{Pusztay, J.V.}, \bibinfo{author}{Adams, M.F.},
  \bibinfo{year}{2023}.
\newblock \bibinfo{title}{A numerical study of {L}andau damping with
  {PETSc-PIC}}.
\newblock \bibinfo{journal}{Communications in Applied Mathematics and
  Computational Science} \bibinfo{volume}{18}, \bibinfo{pages}{135--152}.
\newblock \DOIprefix\doi{10.2140/camcos.2023.18.135}.
%
\bibitem[{Fitzpatrick(2023)}]{link:Fitzpatrick2023}
\bibinfo{author}{Fitzpatrick, R.}, \bibinfo{year}{2023}.
\newblock \bibinfo{title}{Plasma physics}.
\newblock \URLprefix
  \url{https://farside.ph.utexas.edu/teaching/plasma/lectures1/node8.html}.
%
\bibitem[{Forest and Ruth(1990)}]{article:ForestRuth1990}
\bibinfo{author}{Forest, E.}, \bibinfo{author}{Ruth, R.D.},
  \bibinfo{year}{1990}.
\newblock \bibinfo{title}{Fourth-order symplectic integration}.
\newblock \bibinfo{journal}{Physica D: Nonlinear Phenomena}
  \bibinfo{volume}{43}, \bibinfo{pages}{105--117}.
\newblock \DOIprefix\doi{https://doi.org/10.1016/0167-2789(90)90019-L}.
%
\bibitem[{Giraldo(1998)}]{Giraldo1998}
\bibinfo{author}{Giraldo, F.X.}, \bibinfo{year}{1998}.
\newblock \bibinfo{title}{The {L}agrange--{G}alerkin spectral element method on
  unstructured quadrilateral grids}.
\newblock \bibinfo{journal}{Journal of Computational Physics}
  \bibinfo{volume}{147}, \bibinfo{pages}{114--146}.
\newblock \DOIprefix\doi{10.1006/jcph.1998.6078}.
%
\bibitem[{Golub and Welsch(1969)}]{article:GolubWelsch1969}
\bibinfo{author}{Golub, G.H.}, \bibinfo{author}{Welsch, J.H.},
  \bibinfo{year}{1969}.
\newblock \bibinfo{title}{Calculation of {G}auss quadrature rules}.
\newblock \bibinfo{journal}{Mathematics of Computation} \bibinfo{volume}{23},
  \bibinfo{pages}{221--s10}.
\newblock \DOIprefix\doi{10.2307/2004418}.
%
\bibitem[{G{\"u}{\c c}l{\"u} et~al.(2014)G{\"u}{\c c}l{\"u}, Christlieb and
  Hitchon}]{article:GucluChristliebHitchon2014}
\bibinfo{author}{G{\"u}{\c c}l{\"u}, Y.}, \bibinfo{author}{Christlieb, A.J.},
  \bibinfo{author}{Hitchon, W.N.G.}, \bibinfo{year}{2014}.
\newblock \bibinfo{title}{Arbitrarily high order {C}onvected {S}cheme solution
  of the {V}lasov-{P}oisson system}.
\newblock \bibinfo{journal}{Journal of Computational Physics}
  \bibinfo{volume}{270}, \bibinfo{pages}{711--752}.
\newblock \DOIprefix\doi{10.1016/j.jcp.2014.04.003}.
%
\bibitem[{Heath et~al.(2012)Heath, Gamba, Morrison and
  Michler}]{HeathGambaMorrisonMichler2012}
\bibinfo{author}{Heath, R.E.}, \bibinfo{author}{Gamba, I.M.},
  \bibinfo{author}{Morrison, P.J.}, \bibinfo{author}{Michler, C.},
  \bibinfo{year}{2012}.
\newblock \bibinfo{title}{A discontinuous {G}alerkin method for the
  {V}lasov--{P}oisson system}.
\newblock \bibinfo{journal}{Journal of Computational Physics}
  \bibinfo{volume}{231}, \bibinfo{pages}{1140--1174}.
\newblock \DOIprefix\doi{10.1016/j.jcp.2011.09.020}.
%
\bibitem[{Huot et~al.(2003)Huot, Ghizzo, Bertrand, Sonnendr{\"u}cker and
  Coulaud}]{HuotGhizzoBertrandSonnendruckerCoulaud2003}
\bibinfo{author}{Huot, F.}, \bibinfo{author}{Ghizzo, A.},
  \bibinfo{author}{Bertrand, P.}, \bibinfo{author}{Sonnendr{\"u}cker, E.},
  \bibinfo{author}{Coulaud, O.}, \bibinfo{year}{2003}.
\newblock \bibinfo{title}{Instability of the time splitting scheme for the
  one-dimensional and relativistic {V}lasov--{M}axwell system}.
\newblock \bibinfo{journal}{Journal of Computational Physics}
  \bibinfo{volume}{185}, \bibinfo{pages}{512--531}.
\newblock \DOIprefix\doi{10.1016/S0021-9991(02)00079-7}.
%
\bibitem[{Johnson et~al.(2023)Johnson, Rossmanith and
  Vaughan}]{article:JohnsonRossmanithVaughan2023}
\bibinfo{author}{Johnson, E.R.}, \bibinfo{author}{Rossmanith, J.A.},
  \bibinfo{author}{Vaughan, C.}, \bibinfo{year}{2023}.
\newblock \bibinfo{title}{Positivity-preserving {L}ax--{W}endroff discontinuous
  {G}alerkin schemes for quadrature-based moment-closure approximations of
  kinetic models}.
\newblock \bibinfo{journal}{Journal of Scientific Computing}
  \bibinfo{volume}{95}, \bibinfo{pages}{19}.
\newblock \DOIprefix\doi{10.1007/s10915-023-02117-5}.
%
\bibitem[{Landau(1946)}]{article:Landau1946}
\bibinfo{author}{Landau, L.}, \bibinfo{year}{1946}.
\newblock \bibinfo{title}{On the vibrations of the electronic plasma}.
\newblock \bibinfo{journal}{J.Phys.(USSR)} \bibinfo{volume}{10},
  \bibinfo{pages}{25--34}.
\newblock \DOIprefix\doi{10.3367/UFNr.0093.196711m.0527}.
%
\bibitem[{Le~Bourdiec et~al.(2006)Le~Bourdiec, de~Vuyst and
  Jacquet}]{LeBourdiecDeVuystJacquet2006}
\bibinfo{author}{Le~Bourdiec, S.}, \bibinfo{author}{de~Vuyst, F.},
  \bibinfo{author}{Jacquet, L.}, \bibinfo{year}{2006}.
\newblock \bibinfo{title}{Numerical solution of the {V}lasov--{P}oisson system
  using generalized {H}ermite functions}.
\newblock \bibinfo{journal}{Computer Physics Communications}
  \bibinfo{volume}{175}, \bibinfo{pages}{528--544}.
\newblock \DOIprefix\doi{10.1016/j.cpc.2006.07.004}.
%
\bibitem[{Marsden and Weinstein(1982)}]{article:Marsden1982}
\bibinfo{author}{Marsden, J.E.}, \bibinfo{author}{Weinstein, A.},
  \bibinfo{year}{1982}.
\newblock \bibinfo{title}{The {H}amiltonian structure of the {M}axwell-{V}lasov
  equations}.
\newblock \bibinfo{journal}{Physica D: Nonlinear Phenomena}
  \bibinfo{volume}{4}, \bibinfo{pages}{394--406}.
\newblock \DOIprefix\doi{10.1016/0167-2789(82)90043-4}.
%
\bibitem[{Morrison(1980)}]{article:Morrison1980}
\bibinfo{author}{Morrison, P.J.}, \bibinfo{year}{1980}.
\newblock \bibinfo{title}{The {M}axwell-{V}lasov equations as a continuous
  hamiltonian system}.
\newblock \bibinfo{journal}{Physics Letters A} \bibinfo{volume}{80},
  \bibinfo{pages}{383--386}.
\newblock \DOIprefix\doi{10.1016/0375-9601(80)90776-8}.
%
\bibitem[{Mouhot and Villani(2011)}]{article:MouhotVillani2011}
\bibinfo{author}{Mouhot, C.}, \bibinfo{author}{Villani, C.},
  \bibinfo{year}{2011}.
\newblock \bibinfo{title}{On {L}andau damping}.
\newblock \bibinfo{journal}{Acta Mathematica} \bibinfo{volume}{207}.
\newblock \DOIprefix\doi{10.1007/s11511-011-0068-9}.
%
\bibitem[{Palmroth et~al.(2025)Palmroth, Ganse, Pfau-Kempf, Battarbee, Alho,
  N{\"a}ttil{\"a}, Zaitsev, Cozzani, Papadakis, Kotipalo, Zhou, Turc,
  Hoilijoki, Grandin, P{\"a}nk{\"a}l{\"a}inen, Sandroos and {von
  Alfthan}}]{PalmrothGansePfauKempfEtAl2025}
\bibinfo{author}{Palmroth, M.}, \bibinfo{author}{Ganse, U.},
  \bibinfo{author}{Pfau-Kempf, Y.}, \bibinfo{author}{Battarbee, M.},
  \bibinfo{author}{Alho, M.}, \bibinfo{author}{N{\"a}ttil{\"a}, J.},
  \bibinfo{author}{Zaitsev, I.}, \bibinfo{author}{Cozzani, G.},
  \bibinfo{author}{Papadakis, K.}, \bibinfo{author}{Kotipalo, L.},
  \bibinfo{author}{Zhou, H.}, \bibinfo{author}{Turc, L.},
  \bibinfo{author}{Hoilijoki, S.}, \bibinfo{author}{Grandin, M.},
  \bibinfo{author}{P{\"a}nk{\"a}l{\"a}inen, L.}, \bibinfo{author}{Sandroos,
  A.}, \bibinfo{author}{{von Alfthan}, S.}, \bibinfo{year}{2025}.
\newblock \bibinfo{title}{Vlasov methods in space physics and astrophysics}.
\newblock \bibinfo{journal}{Living Reviews in Computational Astrophysics}
  \bibinfo{volume}{11}, \bibinfo{pages}{3}.
\newblock \DOIprefix\doi{10.1007/s41115-025-00024-0}. \bibinfo{note}{article
  number 3; published 14 Nov 2025.}
%
\bibitem[{Parker et~al.(1993)Parker, Friedman, Ray and
  Birdsall}]{article:Parker1993}
\bibinfo{author}{Parker, S.E.}, \bibinfo{author}{Friedman, A.},
  \bibinfo{author}{Ray, S.L.}, \bibinfo{author}{Birdsall, C.K.},
  \bibinfo{year}{1993}.
\newblock \bibinfo{title}{Bounded multi-scale plasma simulation: {A}pplication
  to sheath problems}.
\newblock \bibinfo{journal}{Journal of Computational Physics}
  \bibinfo{volume}{107}, \bibinfo{pages}{388--402}.
\newblock \DOIprefix\doi{10.1006/jcph.1993.1153}.
%
\bibitem[{Qiu and Christlieb(2010)}]{qiu2010}
\bibinfo{author}{Qiu, J.M.}, \bibinfo{author}{Christlieb, A.},
  \bibinfo{year}{2010}.
\newblock \bibinfo{title}{A conservative high order semi-lagrangian {WENO}
  method for the {V}lasov equation}.
\newblock \bibinfo{journal}{Journal of Computational Physics}
  \bibinfo{volume}{229}, \bibinfo{pages}{1130--1149}.
\newblock \DOIprefix\doi{10.1016/j.jcp.2009.10.016}.
%
\bibitem[{Qiu and Shu(2011)}]{QiuShu2011}
\bibinfo{author}{Qiu, J.M.}, \bibinfo{author}{Shu, C.W.}, \bibinfo{year}{2011}.
\newblock \bibinfo{title}{Positivity preserving semi-{L}agrangian discontinuous
  {G}alerkin formulation: {T}heoretical analysis and application to the
  {V}lasov--{P}oisson system}.
\newblock \bibinfo{journal}{Journal of Computational Physics}
  \bibinfo{volume}{230}, \bibinfo{pages}{8386--8409}.
\newblock \DOIprefix\doi{10.1016/j.jcp.2011.07.018}.
%
\bibitem[{Ren and Lapenta(2024)}]{article:RenLapenta2024}
\bibinfo{author}{Ren, J.}, \bibinfo{author}{Lapenta, G.}, \bibinfo{year}{2024}.
\newblock \bibinfo{title}{Recent development of fully kinetic particle-in-cell
  method and its application to fusion plasma instability study}.
\newblock \bibinfo{journal}{Frontiers in Physics} \bibinfo{volume}{12}.
\newblock \DOIprefix\doi{10.3389/fphy.2024.1340736}.
%
\bibitem[{Rossmanith(2026)}]{dogpack}
\bibinfo{author}{Rossmanith, J.A.}, \bibinfo{year}{2026}.
\newblock \bibinfo{title}{{\sc DoGPack}}.
\newblock \bibinfo{howpublished}{\url{http://www.dogpack-code.org/}}.
%
\bibitem[{Rossmanith and Seal(2011)}]{RossmanithSeal2011}
\bibinfo{author}{Rossmanith, J.A.}, \bibinfo{author}{Seal, D.C.},
  \bibinfo{year}{2011}.
\newblock \bibinfo{title}{A positivity-preserving high-order semi-{L}agrangian
  discontinuous {G}alerkin scheme for the {V}lasov--{P}oisson equations}.
\newblock \bibinfo{journal}{Journal of Computational Physics}
  \bibinfo{volume}{230}, \bibinfo{pages}{6203--6232}.
\newblock \DOIprefix\doi{10.1016/j.jcp.2011.04.018}.
%
\bibitem[{Rossmanith and Vaughan(2026)}]{article:RossmanithVaughan2026}
\bibinfo{author}{Rossmanith, J.A.}, \bibinfo{author}{Vaughan, C.},
  \bibinfo{year}{2026}.
\newblock \bibinfo{title}{A novel splitting method for {V}lasov-{A}mp{\`e}re},
  in: \bibinfo{editor}{Grabe, M.}, \bibinfo{editor}{Oblapenko, G.},
  \bibinfo{editor}{Torrilhon, M.} (Eds.), \bibinfo{booktitle}{Rarefied Gas
  Dynamics}, \bibinfo{publisher}{Springer Nature Switzerland}. pp.
  \bibinfo{pages}{451--459}.
\newblock \DOIprefix\doi{10.1007/978-3-032-00094-1_43}.
%
\bibitem[{Sagdeev et~al.(1988)Sagdeev, Usikov and
  Zaslavsky}]{SagdeevUsikovZaslavsky1988}
\bibinfo{author}{Sagdeev, R.Z.}, \bibinfo{author}{Usikov, D.A.},
  \bibinfo{author}{Zaslavsky, G.M.}, \bibinfo{year}{1988}.
\newblock \bibinfo{title}{Nonlinear Physics: From the Pendulum to Turbulence
  and Chaos}.
\newblock \bibinfo{publisher}{Harwood Academic Publishers},
  \bibinfo{address}{New York}.
%
\bibitem[{Seal(2012)}]{thesis:Seal2012}
\bibinfo{author}{Seal, D.}, \bibinfo{year}{2012}.
\newblock \bibinfo{title}{Discontinous {G}alerkin methods for {V}lasov models
  of plasma}.
\newblock Ph.D. thesis. University of Wisconsin.
\newblock
  \bibinfo{note}{\url{https://www.proquest.com/pqdtglobal/docview/1022644798}}.
%
\bibitem[{Shay et~al.(2007)Shay, Drake and Dorland}]{article:Shay2007}
\bibinfo{author}{Shay, M.A.}, \bibinfo{author}{Drake, J.F.},
  \bibinfo{author}{Dorland, B.}, \bibinfo{year}{2007}.
\newblock \bibinfo{title}{Equation {F}ree {P}rojective {I}ntegration: {A}
  multiscale method applied to a plasma ion acoustic wave}.
\newblock \bibinfo{journal}{Journal of Computational Physics}
  \bibinfo{volume}{226}, \bibinfo{pages}{571--585}.
\newblock \DOIprefix\doi{10.1016/j.jcp.2007.04.016}.
%
\bibitem[{Stix(1992)}]{Stix1992}
\bibinfo{author}{Stix, T.H.}, \bibinfo{year}{1992}.
\newblock \bibinfo{title}{Waves in Plasmas}.
\newblock \bibinfo{publisher}{American Institute of Physics},
  \bibinfo{address}{New York}.
%
\bibitem[{Strang(1968)}]{Strang1968}
\bibinfo{author}{Strang, G.}, \bibinfo{year}{1968}.
\newblock \bibinfo{title}{On the construction and comparison of difference
  schemes}.
\newblock \bibinfo{journal}{SIAM Journal on Numerical Analysis}
  \bibinfo{volume}{5}, \bibinfo{pages}{506--517}.
\newblock \DOIprefix\doi{10.1137/0705041}.
%
\bibitem[{Taitano et~al.(2026)Taitano, Burby and
  Alekseenko}]{TaitanoBurbyAlekseenko2026}
\bibinfo{author}{Taitano, W.}, \bibinfo{author}{Burby, J.},
  \bibinfo{author}{Alekseenko, A.}, \bibinfo{year}{2026}.
\newblock \bibinfo{title}{A novel conditional formulation of the
  {Vlasov--Amp{\`e}re} equations: a conservative, positivity, asymptotic and
  {Gauss} law preserving scheme}.
\newblock \bibinfo{journal}{Journal of Plasma Physics} \bibinfo{volume}{92},
  \bibinfo{pages}{E45}.
\newblock \DOIprefix\doi{10.1017/S0022377826101408}.
%
\bibitem[{Vlasov(1968)}]{article:Vlasov1968}
\bibinfo{author}{Vlasov, A.}, \bibinfo{year}{1968}.
\newblock \bibinfo{title}{The vibrational properties of an electron gas}.
\newblock \bibinfo{journal}{Soviet Physics Uspekhi} \bibinfo{volume}{10},
  \bibinfo{pages}{721--733}.
\newblock \DOIprefix\doi{10.1070/PU1968v010n06ABEH003709}.
%
\bibitem[{Whealton et~al.(1986)Whealton, McGaffey and
  Meszaros}]{WhealtonMcGaffeyMeszaros1986}
\bibinfo{author}{Whealton, J.H.}, \bibinfo{author}{McGaffey, R.W.},
  \bibinfo{author}{Meszaros, P.S.}, \bibinfo{year}{1986}.
\newblock \bibinfo{title}{A finite difference {3-D} {P}oisson--{V}lasov
  algorithm for ions extracted from a plasma}.
\newblock \bibinfo{journal}{Journal of Computational Physics}
  \bibinfo{volume}{63}, \bibinfo{pages}{20--32}.
\newblock \DOIprefix\doi{10.1016/0021-9991(86)90082-3}.
%
\bibitem[{Wilhelm et~al.(2025a)Wilhelm, Bacchini, Sch{\"o}ps, Torrilhon, Merkel
  and Kirchhart}]{NufiEtAlMultispeciesVM}
\bibinfo{author}{Wilhelm, R.P.}, \bibinfo{author}{Bacchini, F.},
  \bibinfo{author}{Sch{\"o}ps, S.}, \bibinfo{author}{Torrilhon, M.},
  \bibinfo{author}{Merkel, M.}, \bibinfo{author}{Kirchhart, M.},
  \bibinfo{year}{2025}a.
\newblock \bibinfo{title}{Extending the {N}umerical {F}low {I}teration to the
  multi-species {V}lasov--{M}axwell system through {H}amiltonian splitting}.
\newblock \bibinfo{journal}{arXiv} \DOIprefix\doi{10.48550/arXiv.2511.11322}.
%
\bibitem[{Wilhelm et~al.(2026)Wilhelm, Krah, Schneider, Bacchini and
  Grandgirard}]{WilhelmEtAl2026}
\bibinfo{author}{Wilhelm, R.P.}, \bibinfo{author}{Krah, P.},
  \bibinfo{author}{Schneider, K.}, \bibinfo{author}{Bacchini, F.},
  \bibinfo{author}{Grandgirard, V.}, \bibinfo{year}{2026}.
\newblock \bibinfo{title}{Revisiting kinetic electrostatic electron non-linear
  ({KEEN}) waves in the presence of dynamical ions}.
\newblock \DOIprefix\doi{10.48550/arXiv.2609.04941},
  \href{http://arxiv.org/abs/2609.04941}{\tt arXiv:2609.04941}.
  \bibinfo{note}{arXiv preprint arXiv:2609.04941}.
%
\bibitem[{Wilhelm and
  Torrilhon(2025)}]{WilhelmTorrilhon2025NFIInstabilitiesRGD}
\bibinfo{author}{Wilhelm, R.P.}, \bibinfo{author}{Torrilhon, M.},
  \bibinfo{year}{2025}.
\newblock \bibinfo{title}{Simulation of multi-species kinetic instabilities
  with the {N}umerical {F}low {I}teration}.
\newblock \bibinfo{journal}{arXiv} \DOIprefix\doi{10.48550/arXiv.2511.02405}.
%
\bibitem[{Wilhelm
  et~al.(2025b)}]{WilhelmEtAl2025HighFidelityMultispeciesVlasov}
\bibinfo{author}{Wilhelm, R.P.}, et~al., \bibinfo{year}{2025}b.
\newblock \bibinfo{title}{High fidelity simulations of the multi-species
  {V}lasov equation in the electro-static, collisional-less limit}.
\newblock \bibinfo{journal}{Plasma Physics and Controlled Fusion}
  \bibinfo{volume}{67}, \bibinfo{pages}{025011}.
\newblock \DOIprefix\doi{10.1088/1361-6587/ad9fdb}.
%
\bibitem[{Xiong et~al.(2008)Xiong, Cohen, Rognlien and
  Xu}]{XiongCohenRognlienXu2008}
\bibinfo{author}{Xiong, Z.}, \bibinfo{author}{Cohen, R.H.},
  \bibinfo{author}{Rognlien, T.D.}, \bibinfo{author}{Xu, X.Q.},
  \bibinfo{year}{2008}.
\newblock \bibinfo{title}{A high-order finite-volume algorithm for
  {F}okker--{P}lanck collisions in magnetized plasmas}.
\newblock \bibinfo{journal}{Journal of Computational Physics}
  \bibinfo{volume}{227}, \bibinfo{pages}{7192--7205}.
\newblock \DOIprefix\doi{10.1016/j.jcp.2008.04.004}.
%
\bibitem[{Yoshida(1990)}]{article:Yoshida1990}
\bibinfo{author}{Yoshida, H.}, \bibinfo{year}{1990}.
\newblock \bibinfo{title}{Construction of higher order symplectic integrators}.
\newblock \bibinfo{journal}{Physics Letters A} \bibinfo{volume}{150},
  \bibinfo{pages}{262--268}.
\newblock \DOIprefix\doi{10.1016/0375-9601(90)90092-3}.
%
\bibitem[{Yoshida(1993)}]{article:Yoshida1993}
\bibinfo{author}{Yoshida, H.}, \bibinfo{year}{1993}.
\newblock \bibinfo{title}{Recent progress in the theory and application of
  symplectic integrators}.
\newblock \bibinfo{journal}{Celestial Mechanics and Dynamical Astronomy}
  \bibinfo{volume}{56}, \bibinfo{pages}{27--43}.
\newblock \DOIprefix\doi{10.1007/BF00699717}.
%
\bibitem[{Zhang and Shu(2010)}]{ZhangShu2010}
\bibinfo{author}{Zhang, X.}, \bibinfo{author}{Shu, C.W.}, \bibinfo{year}{2010}.
\newblock \bibinfo{title}{On maximum-principle-satisfying high order schemes
  for scalar conservation laws}.
\newblock \bibinfo{journal}{Journal of Computational Physics}
  \bibinfo{volume}{229}, \bibinfo{pages}{3091--3120}.
\newblock \DOIprefix\doi{10.1016/j.jcp.2009.12.030}.
%
\bibitem[{Zheng et~al.(2022)Zheng, Cai, Qiu and Qiu}]{ZhengCaiQiuQiu2022}
\bibinfo{author}{Zheng, N.}, \bibinfo{author}{Cai, X.}, \bibinfo{author}{Qiu,
  J.M.}, \bibinfo{author}{Qiu, J.}, \bibinfo{year}{2022}.
\newblock \bibinfo{title}{A fourth-order conservative semi-{L}agrangian finite
  volume {WENO} scheme without operator splitting for kinetic and fluid
  simulations}.
\newblock \bibinfo{journal}{Computer Methods in Applied Mechanics and
  Engineering} \bibinfo{volume}{395}, \bibinfo{pages}{114973}.
\newblock \DOIprefix\doi{10.1016/j.cma.2022.114973}.
%
\bibitem[{Zheng et~al.(2025)Zheng, Hayes, Christlieb and
  Qiu}]{ZhengHayesChristliebQiu2025}
\bibinfo{author}{Zheng, N.}, \bibinfo{author}{Hayes, D.},
  \bibinfo{author}{Christlieb, A.}, \bibinfo{author}{Qiu, J.M.},
  \bibinfo{year}{2025}.
\newblock \bibinfo{title}{A semi-{L}agrangian adaptive-rank ({SLAR}) method for
  linear advection and nonlinear {V}lasov--{P}oisson system}.
\newblock \bibinfo{journal}{Journal of Computational Physics}
  \bibinfo{volume}{532}, \bibinfo{pages}{113970}.
\newblock \DOIprefix\doi{10.1016/j.jcp.2025.113970}.
%
\bibitem[{Zhou et~al.(2001)Zhou, Guo and Shu}]{article:ZhouGuoShu2001}
\bibinfo{author}{Zhou, T.}, \bibinfo{author}{Guo, Y.}, \bibinfo{author}{Shu,
  C.W.}, \bibinfo{year}{2001}.
\newblock \bibinfo{title}{Numerical study on {L}andau damping}.
\newblock \bibinfo{journal}{Physica D: Nonlinear Phenomena}
  \bibinfo{volume}{157}, \bibinfo{pages}{322--333}.
\newblock \DOIprefix\doi{10.1016/S0167-2789(01)00289-5}.

\end{thebibliography}

\FloatBarrier

\end{document}